\documentclass{article}
\usepackage[T1]{fontenc}
\usepackage{geometry}
\geometry{verbose,tmargin=2cm,bmargin=2cm,lmargin=2cm,rmargin=2cm}
\usepackage{mathrsfs}
\usepackage{mathtools}
\usepackage{amsmath}
\usepackage{amsthm}
\usepackage{amssymb}
\usepackage{graphicx}
\usepackage{xargs}[2008/03/08]

\makeatletter
\numberwithin{figure}{section}
\numberwithin{equation}{section}
\theoremstyle{plain}
\newtheorem{thm}{\protect\theoremname}[section]
\theoremstyle{definition}
\newtheorem{defn}[thm]{\protect\definitionname}
\theoremstyle{plain}
\newtheorem{assumption}[thm]{\protect\assumptionname}
\theoremstyle{definition}
\newtheorem{example}[thm]{\protect\examplename}
\theoremstyle{plain}
\newtheorem{prop}[thm]{\protect\propositionname}
\theoremstyle{remark}
\newtheorem{rem}[thm]{\protect\remarkname}
\theoremstyle{plain}
\newtheorem{lem}[thm]{\protect\lemmaname}
\theoremstyle{plain}
\newtheorem{cor}[thm]{\protect\corollaryname}

\makeatother

\providecommand{\assumptionname}{Assumption}
\providecommand{\corollaryname}{Corollary}
\providecommand{\definitionname}{Definition}
\providecommand{\examplename}{Example}
\providecommand{\lemmaname}{Lemma}
\providecommand{\propositionname}{Proposition}
\providecommand{\remarkname}{Remark}
\providecommand{\theoremname}{Theorem}

\begin{document}
\title{A dynamic analytic method for risk-aware controlled martingale problems}
\author{Jukka Isoh\"at\"al\"a and William B. Haskell}
\maketitle
\begin{abstract}
We present a new, tractable method for solving and analyzing
risk-aware control problems over finite and infinite, discounted
time-horizons where the dynamics of the controlled process are
described as a martingale problem. Supposing general Polish state and
action spaces, and using generalized, relaxed controls, we state a
risk-aware dynamic optimal control problem of minimizing risk of costs
described by a generic risk function. We then construct an alternative
formulation that takes the form of a nonlinear programming problem,
constrained by the dynamic, {i.e.} time-dependent, and linear
Kolmogorov forward equation describing the distribution of the state
and accumulated costs. We show that the formulations are equivalent,
and that the optimal control process can be taken to be Markov in the
controlled process state, running costs, and time. We further prove
that under additional conditions, the optimal value is attained. An
example numeric problem is presented and solved.

\vspace{0.1cm}
\noindent \textbf{MSC Classification}: Primary, 93E20, 60J25; Secondary,
60J35, 90C30.

\sloppypar
\vspace{0.1cm}
\noindent \textbf{Keywords}: Optimal control, stochastic processes,
martingale problems, forward equation, nonlinear programming.
\end{abstract}

\global\long\def\D{\mathrm{d}}%
\global\long\def\E{\mathrm{e}}%
\global\long\def\RR{\mathbb{R}}%
\global\long\def\XX{\mathbb{X}}%
\global\long\def\YY{\mathbb{Y}}%
\global\long\def\deq{\coloneqq}%
\global\long\def\PpP{\mathfrak{P}}%
\global\long\def\dom{\mathop{\mathrm{dom}}}%
\global\long\def\NN{\mathbb{N}}%
\global\long\def\dD{\mathscr{D}}%
\global\long\def\Pp{\mathcal{P}}%
\global\long\def\Vv{\mathcal{V}}%
\global\long\def\pP{\mathscr{P}}%
\global\long\def\Cc{\mathcal{C}}%
\global\long\def\xX{\mathscr{X}}%
\global\long\def\yY{\mathscr{Y}}%
\global\long\def\Xx{\mathcal{X}}%
\global\long\def\bB{\mathscr{B}}%
\global\long\def\Aa{\mathcal{A}}%
\global\long\def\ordto{\stackrel{o}{\longrightarrow}}%
\global\long\def\cC{\mathscr{C}}%
\global\long\def\precfsd{\preceq_{\mathrm{fsd}}}%
\global\long\def\argmin{\mathop{\mathrm{arg\,min}}}%
\global\long\def\eE{\mathscr{E}}%
\global\long\def\aA{\mathscr{A}}%
\global\long\def\Nn{\mathcal{N}}%
\global\long\def\NnN{\mathfrak{N}}%
\global\long\def\Gg{\mathcal{G}}%
\global\long\def\TT{\mathrm{\mathbb{T}}}%
\global\long\def\EE{\mathrm{\mathbb{E}}}%
\global\long\def\MmM{\mathfrak{M}}%
\global\long\def\LlL{\mathfrak{L}}%
\global\long\def\Mm{\mathcal{M}}%
\global\long\def\CcC{\mathfrak{C}}%
\global\long\def\supp{\mathop{\mathrm{supp}}}%
\global\long\def\rR{\mathscr{R}}%
\global\long\def\PP{\mathbb{P}}%
\global\long\def\CVaR{\mathrm{CVaR}}%
\global\long\def\XX{\mathbb{X}}%
\global\long\def\lL{\mathscr{L}}%
\global\long\def\Ll{\mathcal{L}}%
\global\long\def\AA{\mathbb{A}}%
\global\long\def\eqd{\eqqcolon}%
\global\long\def\UU{\mathbb{U}}%
\global\long\def\TtT{\mathfrak{T}}%
\global\long\def\ttT{\mathfrak{t}}%
\global\long\def\FfF{\mathfrak{F}}%
\global\long\def\Leb{\mathrm{Leb}}%
\global\long\def\Lebae{\text{Leb-a.e.}}%
\global\long\def\compl{\mathrm{c}}%
\global\long\def\ddD{\mathfrak{d}}%
\global\long\def\KK{\mathbb{K}}%
\global\long\def\WW{\mathbb{W}}%
\global\long\def\jjJ{\mathfrak{j}}%
\global\long\def\DD{\mathbb{D}}%
\global\long\def\DdD{\mathfrak{D}}%
\global\long\def\wwW{\mathfrak{w}}%
\global\long\def\HH{\mathbb{H}}%
\global\long\def\Ff{\mathcal{F}}%
\global\long\def\RrR{\mathfrak{R}}%
\global\long\def\AaA{\mathfrak{A}}%
\global\long\def\pPM{\pP_{\Ll}}%
\global\long\def\pPC{\pP_{\Pp}}%
\global\long\def\spn{\mathop{\text{span}}}%
\global\long\def\bplim{\mathop{\text{bp-lim}}}%
\global\long\def\esssup{\mathop{\mathrm{ess}\,\mathrm{sup}}}%
\global\long\def\indic{\mathbb{I}}%
\global\long\def\Hh{\mathcal{H}}%
\global\long\def\interior{\mathop{\mathrm{int}}}%
\global\long\def\Oo{\mathcal{O}}%
\global\long\def\QQ{\mathbb{Q}}%
\newcommandx\aforall[1][usedefault, addprefix=\global, 1=\PP]{#1\text{-}\forall}%
\newcommandx\pesssup[1][usedefault, addprefix=\global, 1=\PP]{\mathop{#1\text{-}\mathrm{ess\,sup}}}%
\global\long\def\sgn{\mathop{\mathrm{sgn}}}%
\global\long\def\VV{\mathbb{V}}%
\global\long\def\Lip{\mathrm{Lip}}%
\global\long\def\tounif{\stackrel{\text{unif.}}{\longrightarrow}}%
\global\long\def\Tounif{\stackrel{\text{unif.}}{\Longrightarrow}}%
\global\long\def\ulim{\mathop{\mathrm{unif\,lim}}}%
\global\long\def\ulimsup{\mathop{\mathrm{unif\,limsup}}}%
\global\long\def\uliminf{\mathop{\mathrm{unif\,liminf}}}%
\global\long\def\gG{\mathscr{G}}%
\global\long\def\HH{\mathbb{H}}%
\global\long\def\warrow{\stackrel{w}{\longrightarrow}}%
\global\long\def\ccarrow{\stackrel{ucc}{\longrightarrow}}%
\global\long\def\Jj{\mathcal{J}}%
\global\long\def\id{\mathrm{id}}%
\global\long\def\trace{\mathop{\mathrm{tr}}}%

\section{Introduction}
We consider the risk-aware optimization of controlled stochastic
processes over a finite $\TT=[0,T]$, $T>0$, or infinite time-horizon
$\TT=[0,\infty)$ on general Polish state and action spaces $\XX$ and
  $\AA$. That is, for a filtered probability space $(\Omega, \Sigma,
  \Ff, \PP)$, we solve
  \begin{subequations}
    \label{eq:intro_risk}
  \begin{gather}
      \inf_{a}\rho\left(\int_{0}^{\infty}\E^{-\alpha t}c(x_{t},a_{t},t)\,\D t\right)\quad (\TT = [0,\infty)) \quad \text{or}
        \\
        \inf_{a}\rho\left(\int_{0}^{T}c(x_{t},a_{t},t)\,\D t + v(x_T) \right)\quad (\TT = [0,T]),
  \end{gather}
  \end{subequations}
where $\rho : \Ll(\Omega; \RR) \to \RR \cup \{\infty\}$ is a risk
function, $\alpha>0$ is the discount rate, $c : \XX \times \AA \times
\TT \to \RR$, and $v : \XX \to \RR$ are given cost rate and terminal
cost functions, and the infima run over sets of admissible of control
processes $a$ while $x$ are the controlled stochastic processes.
Here, the controlled processes shall be determined by the martingale
formulation, and we will consider generalized, relaxed controls.

The introduction of the risk function $\rho$ sets our control problem
apart from the classical, or \emph{risk-neutral} problem where $\rho$
is the expectation, denoted $\EE$. The expectation judges events of
high probability and low cost with the same standard as unlikely
events but high costs, and this may often be undesirable. Risks
matter, and our intuitions immediately weigh minor recurring
adversities differently from major catastrophes. The role of the risk
function is to describe the controllers preferences that may feature
{e.g.} tail-risk avoidance, loss aversion, or even risk-seeking
tendencies. Practical risk-management applications need proper models
of risk and risk preferences, and various risk functions such as the
conditional value-at-risk \cite{Rockafellar2000}, or classes of risk
functions \cite{Artzner99, Follmer02} have become an import tool in
risk modeling \cite{Pflug2007}.

The motivation of this work is constructing a tractable and readily
generalizable method to solving problems of the form of
Problem~(\ref{eq:intro_risk}). Approaches for risk-neutral dynamic
control do not trivially generalize to the risk-aware setup. The
standard approaches can be broadly binned into three categories:
(\emph{i}) dynamic programming, (\emph{ii}) probabilistic methods, and
(\emph{iii}) the convex analytic approach. All may be applicable to
risk-aware problems, but none are without notable issues. Dynamic
programming methods \cite{Lions83I,Lions83II,Soner06} form arguably
the most well-known and most frequently used apprach in the
risk-neutral case.  However, the derivation of the dynamic programming
equations hinges on the properties of the expectation that are not
always shared by risk functions. Overcoming this generally requires
considering \emph{dynamic risk measures} that impose nontrivial
additional structure on the risk functions \cite{Acciaio2011}.

\sloppypar
The second, probabilistic group of methods include most notably
various formulations of the stochastic Pontryagin's minimum principle,
see {e.g.} \cite{Peng1990, Zhou1991, Yong1999}. In this context,
solutions to the optimal control problem are found from stochastic
equations, hence the descriptor ``probabilistic.'' These methods have
been amenable to risk-aware problems, and risk-aware specific
approaches have been successfully developed, see
{e.g.} \cite{Peng2004_NonlinE, Peng2008, Peng2010}, though these too are
constrained to specific forms of dynamic risk measures. Recently, an
alternative formulation for generic (not necessarily dynamic) risk
functions was also found \cite{Isohatala2020_subm}. The solution of
the probabilistic formulations nonetheless involves the nontrivial
task of solving systems of forward-backward systems of stochastic
dynamic equations.

Convex analytic methods recast the dynamic control problem to a static
problem of optimizing over distributions, often called occupation
measures. In the risk-neutral case, this approach conventionally
yields linear programming problems, {e.g.} in the discounted infinite
time-horizon setup without explicit time-dependence,
\begin{align}
 & \begin{aligned}\inf_{\mu\geq0} & \int_{\XX\times\AA}c\left(x,a\right)\mu\left(\D x\times\D a\right)\\
\text{s.t. } & L(\mu) = \alpha \nu_0.
\end{aligned}
\label{eq:intro_LP}
\end{align}
Here, the measure $\mu$ represents the (discounted)
likelihood of the state-control pair visiting a given point in the
state-action space, and $L(\mu) = \alpha \nu_0$, the adjoint equation,
linear in $\mu$, encodes a set of constraints that determine the
occupation measure.  The measure $\nu_0$ is the initial distribution
for the controlled process. Proving the equivalence of
Problem~(\ref{eq:intro_LP}) and Problem~(\ref{eq:intro_risk}) with
$\rho = \EE$ requires showing that a solution of one of the problems
yields a solution to the other.

The convex analytic approach extends to the risk-aware case more
readily than dynamic programming methods, as risk functions can
evaluate risks from the cost distributions, and derivations of the
convex analytic problem do not heavily rely on the properties of the
expectation. 
In \cite{haskell15}, a state space augmentation scheme similar to that
of \cite{bauerle14} was used to derive a risk-aware convex analytic
formulation in discrete time. However, as convex analytic methods
construct the occupation measures from long-run, discounted visitation
frequencies, recovering the full cost distribution from the adjoint
equation $L(\mu) = \alpha \nu_0$ becomes technically awkward.

Here, we take a different approach that nonetheless bears some
similarity to the convex analytic method, in that we obtain a linearly
constrained nonlinear programming problem that is equivalent to a
generalization of Problem~(\ref{eq:intro_risk}).  We formulate the
problem as a \emph{``dynamic'' analytic problem}, in the sense that
the static adjoint equations of the convex analytic method are
replaced by a time-dependent equation, the Kolmogorov forward
equation. The forward equation yields the joint, time-dependent
distribution of the state of the controlled process and the associated
cumulative costs. This distribution is then in turn used to evaluate the
risk-aware objective that can now feature generic risk functions. The
dynamic formulation is natural to the risk-aware problem:
Risk-awareness generally requires in some way tracking running costs,
or future risks, given the information available to the controller at
any given time, see {e.g.} \cite{Isohatala2020_subm}, where we showed
that; Peng's nonlinear expectations \cite{Peng1997} also introduce an
additional process, modeling the controller's risks.


\subsection{Related literature}

\paragraph*{Risk measures}

There is a substantial body of work on risk measures in the static
setting, such as \cite{Ruszczynski06, kusuoka2001law,
  frittelli2014risk}.  This work focuses on axiomatic foundations for
modeling preferences, as well as for tractable risk-aware optimization
schemes. Dynamic risk functions are discussed in
\cite{Acciaio2011}. Nonlinear expectations form a subset of dynamic
risk functions, and are considered in \cite{Peng2004_NonlinE,
  RosazzaGianin2006}.

\paragraph*{The convex analytic approach}

The convex analytic method (or the linear programming method, in the
case that the problem is risk-neutral) is closest in spirit to the
approach we take in this paper. It has featured heavily in the study
of Markov decision processes (MDPs) and controlled stochastic processes.
In the discrete time setting, the linear programming approach for
MDPs is pioneered in \cite{Manne_Linear_1960} and further developed
in \cite{kallenberg1983linear}. An early survey of this technique
is found in \cite{ABF+93}. The main idea is that some MDPs can be
written as linear programming problems in terms of appropriate occupation
measures. A rigorous theory of the convex analytic approach for MDPs
with general Borel state and action spaces is developed in the works
\cite{borkar1988convex,HG98,Bor02,HL02}. Detailed monographs on Markov
decision processes are found in \cite{HL96,HL99,Put05}.

The convex analytic approach has also been well studied for continuous
time controlled Markov processes. Occupation measures for controlled
Markov processes in continuous time and state and action spaces were
first introduced in \cite{Stockbridge90a,Stockbridge90b}, where the
process dynamics were stated as a martingale problem and long term
average costs were considered. The theory was extended to discounted
and finite-horizon problems in the closely related papers
\cite{bhatt96} and \cite{Kurtz1998}, which also proved the optimality
of feedback controls (i.e. controls that depend only on the current
state). Convex analytic methods for controlled stochastic differential
equations are considered in \cite{borkar2005existence}. Singular
controls (see {e.g.} \cite{Shreve88} for an introduction) have
subsequently been analyzed within the convex analytic framework in
\cite{Taksar97} for diffusion processes with discounted
costs. Martingale problems with singular dynamics and controls, with
ergodic and discounted costs, were studied in \cite{Kurtz2001} and the
constrained case was studied in \cite{Kurtz2017}. The martingale
formulation of the problem and convex analytic methods were used in
the study of optimal stopping problems in \cite{Moon02}, and in
\cite{Helmes07} where also singular dynamics and controls were
included. Constrained continuous time MDPs are solved using convex
analytic techniques in \cite{guo2011discounted}, where the process
dynamics are described by a transition kernel rather than the
generator. More recently, a similar occupation measure approach for
controlled Markov jump processes is developed in
\cite{piunovskiy2011discounted, piunovskiy2015randomized}.  A survey
of optimal control methods for diffusion processes in particular can
be found in \cite{Borkar05}.

\subsection{Contributions}

The main contribution of this paper is developing a dynamic analytic
formulation of a generic risk-aware control problem. In particular,
(\emph{i}) we firstly state risk-aware control problems where the
controlled processes are described by martingale problems. We allow
for generic, Polish state and action space which makes our results
applicable for a broad family of types of stochastic processes;
continuous-time Markov decision processes and controlled L\'evy
processes are examples of these.  We require a number of rather
technical assumptions that are nonetheless often
satisfied. (\emph{ii}) Additionally, we introduce a number of
regularity conditions that ensure that the solutions of the martingale
problem are sufficiently well-behaved, {e.g.} in the sense that the
solutions never ``explode'' by diverging to some infinity point.
(\emph{iii}) We then derive our dynamic analytic formulation, and
prove its equivalence with the original martingale problem.
This is
based on a state space augmentation scheme, similar to the one in
\cite{bauerle14,haskell15}, that allows for the Kolmogorov forward
equation to also capture the distribution of costs. We additionally
provide conditions under which the optimal value is attained.

This paper is organized as follows. We begin in the next section by
introducing standard notation and describing the control model we
consider. This section defines our risk-aware problem, and states
the main assumptions. Section~\ref{sec:main} contains our main results,
where we show that Problem~(\ref{eq:intro_risk}) is equivalent to
a static optimization problem over measure-valued functions of time
satisfying a linear constraint (namely, the forward equation). In
Section~\ref{sec:numer} we present a simple application of the results.
Section~\ref{sec:concl} gives a short summary of the results. Some
of the proofs and frequently used auxilliary results are given in
the Appendix.

\section{Model}

\paragraph*{Basic definitions}

Let $\TT\deq\RR_{\geq0}$ or $[0,T]$ for some $T\in\RR_{>0}$ be
the set of time indices, and let $\RR_{\infty}\deq\RR\cup\{\infty\}$.
We shall cover both finite and infinite time-horizon problems; which
one we consider is determined whether $\TT$ is compact or not.

For any topological space $\UU$, we denote the Borel $\sigma$-algebra
on $\UU$ by $\bB(\UU)$. Finite Borel (probability) measures on $\UU$
are denoted $\Mm(\UU)$ ($\Pp(\UU)$). The space of probability measures
defaults to the topology of weak convergence, and for separable metric
space $\UU$, this topology is metrizable using the Prokhorov metric,
denoted $d_{P}$ \cite[Section 3.1]{EK1986}. Weak convergence of
$(\mu_{n})_{n\in\NN}\in\Pp(\UU)^{\NN}$ to a $\mu\in\Pp(\UU)$ is
denoted $\mu_{n}\Rightarrow\mu$. Given topological spaces $\UU_{1}$
and $\UU_{2}$, we say that a Borel measurable mapping $\pi:\UU_{2}\to\Pp(\UU_{1})$
is a transition function from $\UU_{2}$ to $\UU_{1}$, and denote
the set of transition functions from $\UU_{2}$ to $\UU_{1}$ by $\Pp(\UU_{1}\mid\UU_{2})$.

For a given probability space $(\Omega,\Sigma,\PP)$, we denote the
set of all $(\UU,\bB(\UU))$-valued random variables by $\Ll(\Omega,\Sigma,\PP;\UU)$
or $\Ll(\Omega;\UU)$ for short. The expectation with respect to $\PP$
is denoted by $\EE$. The law of a random variable $X\in\Ll(\Omega,\Sigma,\PP;\UU)$
is denoted $\lL(X)\deq\PP\circ X^{-1}$. For a Banach space $(\UU,|\cdot|)$,
by $\Ll^{p}(\Omega,\Sigma,\PP;\UU)$ or simply $\Ll^{p}(\Omega;\UU)$,
$p\in[1,\infty)$, we mean the set of $X\in\Ll(\Omega;\UU)$ such
that $\EE[|X|^{p}]<\infty$. The norms on the spaces $\Ll^{p}(\Omega;\RR)$,
$p\in[1,\infty]$, are denoted $\Vert\cdot\Vert_{p}$. For every $p\in[1,\infty)$
and Polish $(\UU,d)$, we use $\Pp^{p}(\UU)$ to denote the probability
measures such that for all $\mu\in\Pp^{p}(\UU)$, for some $u_{0}\in\UU$,
$\int d(u,u_{0})^{p}\mu(\D u)<\infty$. We assign $\Pp^{p}(\UU)$
the $p$-Wasserstein metric \cite[Definition 6.1]{Villani2009}, denoted
$W^{p}$.

For a pair of measurable spaces $\UU$ and $\VV$, measurable functions
from $\UU$ to $\VV$ are denoted $M(\UU,\VV)$, $B(\UU,\VV)$ if
they are bounded and $\VV$ is metric. Continuous functions shall
be the set $C(\UU,\VV)$ which is by default assigned the compact-open
topology. If $\VV=\RR$, the $\VV$ argument is omitted. Bounded and
continuous, and compactly supported continuous $\RR$-valued functions
are denoted $C_{b}(\UU)$ and $C_{c}(\UU)$, respectively, and these
are assigned the supremum norm, denoted $\Vert\cdot\Vert$. If $(\UU,d)$
is a metric space, bounded Lipschitz functions are denoted $C_{bl}(\UU)$,
and are defined so that $C_{bl}(\UU)\deq\{f\in C_{b}(\UU)\mid\Vert f\Vert_{bl}<\infty\}$,
where $\Vert\cdot\Vert_{bl}\deq\Vert\cdot\Vert+\Vert\cdot\Vert_{l}$
and $\Vert f\Vert_{l}\deq\sup_{u^{\prime}\neq u}|f(u^{\prime})-f(u)|/d(u^{\prime},u)$
for all $f\in C_{b}(\UU)$.

C\`adl\`ag, or left-continuous with limits from the right, functions from
$\TT$ to a Polish $\UU$ are denoted $D(\TT,\UU)$. For $\UU=\RR^{n}$,
$n\in\NN$ we use $C^{(k_{1},\ldots,k_{n})}(\UU)$ to denote functions
that can be differentiated $k_{i}$ times with respect to the $i$th
argument, $i\in\{1,\ldots,n\}$, with all the derivatives being in
$C(\UU)$, and similarly for the function spaces $C_{b}$ and $C_{c}$.

Let $\UU_{1}$ and $\UU_{2}$ be Polish spaces. For all $\mu\in\Pp(\UU_{1}\times\UU_{2})$
we denote the $\UU_{1}$, $\UU_{2}$ marginals of $\mu$ by $\mu^{\UU_{1}}$
and $\mu^{\UU_{2}}$, respectively. The regular conditional probabilities
on $\UU_{1}$ given $u_{2}\in\UU_{2}$ are denoted $\mu^{\UU_{1}\mid\UU_{2}}\in\Pp(\UU_{1}\mid\UU_{2})$
so that for all $f \in M(\UU_{1}\times\UU_{2})$,
\begin{gather*}
\int_{\UU_{1}\times\UU_{2}}f(u_{1},u_{2})\mu(\D u_{1},\D u_{2})=\int_{\UU_{2}}\left[\int_{\UU_{2}}f(u_{1},u_{2})\mu^{\UU_{1}\mid\UU_{2}}(\D u_{1}\mid u_{2})\right]\mu^{\UU_{2}}(\D u_{2}).
\end{gather*}
We will frequently need to separate measures into their marginal and
conditional parts, and hence we abbreviate equalities of the above
form to $\mu(\D u_{1}\times\D u_{2})=\mu^{\UU_{1}\mid\UU_{2}}(\D u_{1}\mid u_{2})\mu^{\UU_{2}}(\D u_{2})$.

Evaluation of a function $f$ defined on $\TT$ at a point $t\in\TT$
is denoted $f_{t}$.

We introduce a weak topology for functions $\mu\in M(\TT,\Pp(\UU))$,
where $\UU$ is Polish. We say that $(\mu^{(n)})_{n\in\NN}\in M(\TT,\Pp(\UU))^{\NN}$
\emph{converges} \emph{weakly} to a $\mu\in M(\TT,\Pp(\UU)$ and denote
$\mu^{(n)}\warrow\mu$, if and only if for all $h\in C_{b}(\UU\times\TT)$
such that the support of $h$ is contained in a set $\UU\times[0,t_{h}]$,
$t_{h}\in\TT$, we have that $\int_{\TT}\int_{\UU}h(u,t)\mu_{t}^{(n)}(\D u)\,\D t\to\int_{\TT}\int_{\UU}h(u,t)\mu_{t}(\D u)\,\D t$;
this is used in {e.g.} \cite{Kurtz2001}. For $C(\TT,\Pp(\UU))$, we
assume the (metrizable) topology of uniform convergence on compacts,
and denote $\mu^{(n)}\ccarrow\mu$ when a sequence $(\mu^{(n)})_{n\in\NN}\in C(\TT,\Pp(\UU))^{\NN}$
converges to a $\mu\in C(\TT,\Pp(\UU))$. Additionally, for any Polish
$\UU$ and $\VV$ and $\mu\in M(\TT,\Pp(\UU\times\VV))$, we denote
$\mu^{\UU}\deq(\mu_{t}^{\UU})_{t\in\TT}\in M(\TT,\Pp(\UU))$.

The Dirac measure centered at $u\in\UU$, $\UU$ a measurable space,
is denoted by $\delta_{u}$.

\subsection{Martingale formulation of the control problem}

In the following, $\XX$ and $\AA$ shall represent the state and
action spaces, both assumed Polish. We will also need to consider
processes on other (Polish) state spaces, and so, when appropriate
we state our definitions for a generic state space $\UU$.

The dynamics of the control problem are determined by the generator
of the process and an initial distribution. The following definition
formalizes these terms and introduces the notion of a solution that
we shall be using to describe the dynamics of our controlled processes.
\begin{defn}
\label{def:relaxed}(\emph{Relaxed controlled martingale problem})
Let $\UU$ and $\AA$ be Polish spaces, and let $A:\dD(A)\supset C_{b}(\UU)\to\rR(A)\subset C(\UU\times\AA\times\TT)$
and $\nu_{0}\in\Pp(\UU)$ be given.

We call the pair $(A,\nu_{0})$ a \emph{relaxed controlled martingale
problem}, where $A$ is the generator of the processes considered,
and $\nu_{0}$ is the initial distribution.

(\emph{Solution to a relaxed controlled martingale problem}) Let $(A,\nu_{0})$
be a relaxed controlled martingale problem. A \emph{solution to the
relaxed controlled martingale problem} $(A,\nu_{0})$ consists of
a filtered probability space $(\Omega,\Sigma,\Ff=(\Ff_{t})_{t\in\TT},\PP)$
and a $\UU\times\Pp(\AA)$-valued stochastic process $(u,\pi)=(u_{t},\pi_{t})_{t\in\TT}$
defined on $(\Omega,\Sigma,\Ff,\PP)$ such that: (\emph{i}) The process
$(u,\pi)$ is progressively measurable with respect to the filtration
$\Ff$; (\emph{ii}) the distribution of $u_{0}$ equals $\nu_{0}$;
and (\emph{iii}) for all $f\in\dD(A)$, the process $(m_{t}^{f})_{t\in\TT}$,
\begin{gather}
m_{t}^{f}\deq f(u_{t})-f(u_{0})-\int_{0}^{t}\int_{\AA}Af(u_{s},a,s)\pi_{s}(\D a)\,\D s\quad\forall f\in\dD(A),\,t\in\TT,\label{eq:martingale}
\end{gather}
is an $\Ff$-martingale for all $f\in\dD(A)$. We denote the set of
relaxed controlled solutions by $\RrR(A,\nu_{0})$, and for brevity,
we shall identify a solution by its control component, i.e. write
$\pi\in\RrR(A,\nu_{0})$ to mean $(\Omega,\Sigma,\Ff,\PP,u,\pi)$.

(\emph{C\`adl\`ag solution to a relaxed controlled martingale problem})
A solution $\pi\in\RrR(A,\nu_{0})$ is a \emph{c\`adl\`ag solution to
the relaxed controlled martingale problem} if additionally $u \in D(\TT,\UU)$,
$\PP$-almost surely. The subset of c\`adl\`ag solutions shall be denoted
$\DdD(A,K,\nu_{0})$.
\end{defn}

We allow constraints on the relaxed controlled solutions that only
depend on the finite dimensional distributions of controls and states.
\begin{defn}
\label{def:admissible}Let $(A,\nu_{0})$ be a relaxed controlled
problem and $K\subset M(\TT,\Pp(\UU\times\AA))$. A relaxed controlled
solution $\pi\in\RrR(A,\nu_{0})$ is \emph{admissible} (given $K$)
if $\mu\in K$, where $\mu$ is defined
\begin{gather*}
\int_{\UU\times\AA}h(u,a)\mu_{t}(\D u\times\D a)=\EE\biggl[\int_{\AA}h(u_{t}^{\pi},a)\pi_{t}(\D a)\biggr]\quad\forall h\in C_{b}(\UU\times\AA),\,t\in\TT.
\end{gather*}
Constrained problems and the associated solutions are denoted $(A,K,\nu_{0})$
and $\RrR(A,K,\nu_{0})$, $\DdD(A,K,\nu_{0})$, respectively.
\end{defn}

We emphasize that each relaxed controlled solution $\pi\in\RrR(A,\nu_{0})$
comes in general with its own filtered probability space. When appropriate,
we label the objects forming the solution as $(\Omega^{\pi},\Sigma^{\pi},\Ff^{\pi},\PP^{\pi},(u^{\pi},\pi))$
to make this point explicit. In the following, we shall consider almost
exclusively c\`adl\`ag solutions.

\paragraph*{Baseline assumptions on the relaxed controlled problem}

First, we introduce a few technical definitions that are necessary
to state our main assumptions. We recall the notion of pre-generators,
used to characterize the operators that are sufficiently regular to
correspond to generators of Markov processes \cite{Kurtz2001}:
\begin{defn}
Let $\UU$ be a Polish space. An operator $A:M(\UU)\to M(\UU)$ is
a \emph{pre-generator} if it is: (\emph{i}) dissipative, i.e. for
all $\lambda>0$ and all $f\in\dD(A)$, $\Vert(\lambda-A)f\Vert\geq\lambda\Vert f\Vert$,
and (\emph{ii}) there are sequences of measure valued functions $(\mu_{n})_{n\in\NN}$
with $\mu_{n}:\UU\to\Pp(\UU)$ and $(\lambda_{n})_{n\in\NN}$ with
$\lambda_{n}:\UU\to\RR_{\geq0}$, for all $n\in\NN$, such that $h(u)=\lim_{n\to\infty}\lambda_{n}(u)\int_{E}(f(u)-f(u'))\mu_{n}(u)(\D u')$
for all $u\in\UU$ and for every $f\in\dD(A)$, $h\in\rR(A)$ such
that $Af=h$.
\end{defn}

We also utilize the notion of bounded point-wise limit and strong
separability of points, see {e.g.} \cite[Chapter 3.4]{EK1986}.
\begin{defn}
Let $\UU$ be a metric space. (\emph{i}) A sequence of functions $(f_{k})_{k\in\NN}\subset B(\UU)$
converges \emph{boundedly and point-wise} to a function $f\in B(\UU)$
if $\sup_{k\in\NN}\Vert f_{k}\Vert<\infty$ and $\lim_{k\to\infty}f_{k}(u)=f(u)$
for all $u\in\UU$. We denote this $\bplim_{k\to\infty}f_{k}=f$.
(\emph{ii}) A set $M\subset B(\UU)$ is said to be \emph{bp-closed},
if for all $(f_{k})_{k\in\NN}\subset M$, $\bplim_{k\to\infty}f_{k}=f\in B(\UU)$
implies $f\in M$. (\emph{iii}) The \emph{bp-closure} of a set $M\subset B(\UU)$
is the smallest bp-closed set that contains $M$. (\emph{iv}) A set
of functions $\Aa\subset C_{b}(\UU)$ is said to \emph{strongly separate
points }if for every $u\in\UU$ and a neighborhood $U$ of $u$, there
is a finite $\Aa(u,U)\subset\Aa$ such that $\inf_{u^{\prime}\notin U}\max_{f\in\Aa(u,U)}|f(u^{\prime})-f(u)|>0$.
\end{defn}

The following assumption, adapted from \cite{Kurtz2001,Kurtz2017},
is used to guarantee existence of relaxed solutions to controlled
martingale problems, as stated below in Theorem~\ref{thm:ks}.
\begin{assumption}
\label{cond:hasmarkov}Let $\UU$ and $\AA$ be Polish spaces, and
let $A:C_{b}(\UU)\supset\dD(A)\to\rR(A)\subset C(\UU\times\AA\times\TT)$.
The tuple $(\UU,\AA,A)$ satisfies the following conditions:

(\emph{i}) The constant function $1\in C_{b}(\UU)$ is in $\dD(A)$
and $A1=0$.

(\emph{ii}) The operator $A_{a}$ defined as $A_{a}f(u,t)\deq Af(u,a,t)$
for all $f\in\dD(A)$ and $a\in\AA$ is a pre-generator.

(\emph{iii}) The domain of $A$, $\dD(A)$, is an algebra that strongly
separates points. 

(\emph{iv}) There is a function $\psi\in C(\UU\times\AA\times\TT)$,
$\psi\geq1$, such that for each $f\in\dD(A)$ there is a constant
$a_{f}$ satisfying $|Af(u,a,t)|\leq a_{f}\psi(u,a,t)$ for all $(u,a,t)\in\UU\times\AA\times\TT$.

(\emph{v}) The set $A_{0}\deq\{(f,\psi^{-1}Af)\mid f\in\dD(A)\}$
is such that there exists $\{f_{k}\}_{k\in\NN}\subset\dD(A)$ for
which $A_{0}$ is contained in the bp-closure of the linear span of
$\{(f_{k},A_{0}f_{k})\}_{k\in\NN}$.
\end{assumption}

Parts (\emph{i})--(\emph{iii}) in Assumption~\ref{cond:hasmarkov}
amount to basic requirements for the martingale problem and its
associated forward equation to have solutions (compare to the
standard, though stronger assumptions of Theorem~4.5.4 and
Theorem~4.9.19 in \cite{EK1986} in the uncontrolled case, with a
locally compact state space $\UU$).  The requirement that $A$ is a
pre-generator is a relaxation of the assumption that $A$ satisfies the
positive maximum principle. Part (\emph{iv}) of
Assumption~\ref{cond:hasmarkov} allows for construction of an
operator, specifically $\psi^{-1}A$, that takes values on bounded
continuous functions, and which is used in weak convergence arguments.
Part (\emph{v}) is used in \cite{Kurtz2001} to construct a compact
Polish space $\hat{\UU}$ along with a continuous mapping
$\Gamma:\UU\to\hat{\UU}$ with a measurable inverse that allows
extending of results assuming a compact state space to the case where
$\UU$ is not compact or locally compact. This condition was earlier
applied in \cite{Bhatt1993} for the same purpose in the context of
uncontrolled martingale problems and in \cite{bhatt96} for controlled
problems. Additional discussion and examples can be found in
\cite{Kurtz2001,Kurtz2017}. Returning to part (\emph{iii}), we note
that typically it is assumed that $\dD(A)$ only separates
points. Here, we assume strong separation of points, and this is to
ensure that the above mapping $\Gamma$ is in fact a homeomorphism
(that is, its inverse is also continuous) \cite[Lemma 1]{Blount2010}.
A convenient characterization of sets that strongly separate points is
given in \cite[Lemma 4]{Blount2010}. We also recall that sets that
strongly separate points are convergence determining \cite[Theorem
  3.4.5(b)]{EK1986}.

In order to establish the equivalence of control problems stated in
terms of relaxed controlled solutions and those formulated using analytic
methods, we will require additional constraints on the generator $A$.
\begin{defn}
\label{def:regular}Suppose $(\UU,\AA,A)$ satisfies Assumption~\ref{cond:hasmarkov}.
We say the martingale problem $(A,K,\nu_{0})$ is \emph{regular},
if there exists constants $L_{1},L_{\UU},L_{\AA}>0$, $\beta_{1}>1$,
and $\Lambda_{1},\text{\ensuremath{\Lambda}}_{\AA}\geq0$, non-negative
functions $\phi=(\phi_{n})_{n\in\NN}\subset\dD(A)$, and $\psi_{\UU}\in C(\UU)$
and $\psi_{\AA}\in C(\AA)$ such that (\emph{i}) $|A\phi_{n}(u,a,t)|\leq\Lambda_{1}(1+\psi_{\UU}(u)+\psi_{\AA}(a))$
and $\psi(u,a,t)^{\beta_{1}}\leq L_{1}(1+\psi_{\UU}(u)+\psi_{\AA}(a))$
for all $(u,a,t)\in\UU\times\AA\times\TT$ and $n\in\NN$; (\emph{ii})
the sequence $(\phi_{n})_{n\in\NN}$ is increasing and converges pointwise
to $\psi_{\UU}$; (\emph{iii}) $\psi_{\UU}$ and $\psi_{\AA}$ are
inf-compact; (\emph{iv}) the initial distribution satisfies
\begin{gather}
\int\psi_{\UU}(x)\nu_{0}(\D x)\leq L_{\UU};\label{eq:regular-iv-bound}
\end{gather}
(\emph{v}) for all $\mu\in K$, 
\begin{gather}
\int_{\AA}\psi_{\AA}(a)\mu_{t}^{\AA}(\D a)\leq L_{\AA}\E^{\text{\ensuremath{\Lambda}}_{\AA}t}\quad\forall t\in\TT;\label{eq:regular-control-bound}
\end{gather}
and (\emph{vi}) $K$ is closed in the weak topology.
\end{defn}

As a notational aside, we use the symbol $\Lambda$ for quantities
representing exponential growth rates, $L$ for bounds and constants
of proportionality, and $\beta$ for powers that control relative
magnitudes and scaling rates between different quantities; naturally,
the $\Lambda$'s are the most important, while the $L$'s tend to
be the least significant.

The condition that a relaxed controlled martingale problem is regular
can be viewed as a generalization of growth bounds on {e.g.} the solutions
of stochastic differential equations, for which it is common to assume
that the drift and diffusion coefficients have at most linear growth.
The following example illustrates this.
\begin{example}
\label{exa:sde}Consider a stochastic differential equations driven
by orthogonal martingale measures, see {e.g.} \cite{ElKaroui1990},
on $\XX=\RR^{d_{x}}$, $\AA=\RR^{d_{a}}$, $d_{x},d_{a}\in\NN$, characterized
by drift and diffusion functions $b\in C(\XX\times\AA\times\TT,\XX)$
and $\sigma\in C(\XX\times\AA\times\TT,\XX\times\XX)$. Suppose $b$
and $\sigma$ have bounded growth in the sense that $\left|b(x,a,t)\right|,\left|\sigma(z,a,t)\right|\leq L(1+|x|+|a|^{q})$,
$q\in\RR_{\geq0}$, for some $L>0$ and all $(x,a,t)\in\XX\times\AA\times\TT$
($|\cdot|$ stands for the Frobenius norm for matrices). The corresponding
generator reads 
\begin{align*}
Gf(x,a,t) & \deq b(x,a,t)^{\top}\nabla f(x)+\frac{1}{2}\trace\left\{ \sigma\sigma^{\top}(x,a,t)\nabla^{\top}\nabla f(x)\right\} \\
 & \qquad\quad\forall f\in C_{c}^{(2)}(\XX),\,(x,a,t)\in\XX\times\AA\times\TT,
\end{align*}
and where $\nabla$ and $\nabla^{\top}\nabla$ stand for the gradient
and Hessian operators, respectively. The domain of $G$ can be taken
to be $\dD(G)=\{f+f_{0}\mid f\in C_{c}^{(2)}(\XX),f_{0}\in\RR\}$.
The regularity conditions are satisfied {e.g.} with the choices $\psi(x,a,t)=1+|a|^{2q}$,
$\psi_{\XX}(x)=|x|^{2}$, $\psi_{\AA}(a)=|a|^{2q\beta_{1}}$ for all
$(x,a,t)\in\XX\times\AA\times\TT$, and where $\beta_{1}>1$ can be
arbitrarily small. Additionally, we can take
\begin{gather*}
a_{f}=\bar{a}\biggl\{\Bigl\Vert\bigl(1+\left|\cdot\right|\bigr)\nabla f(\cdot)\Bigr\Vert+\Bigl\Vert\bigl(1+\left|\cdot\right|^{2}\bigr)\nabla^{\top}\nabla f(\cdot)\Bigr\Vert\biggr\}\quad\forall f\in\dD(G),
\end{gather*}
where $\bar{a}$ is a constant independent of $f$. The initial distribution
should now have finite variance, by Eq.~(\ref{eq:regular-iv-bound}),
and the coefficients on the right-hand side of Eq.~(\ref{eq:regular-control-bound})
can be selected freely. We note that albeit $\Lambda_{\AA}$ and $\beta_{1}$
may be chosen arbitrarily large and small, respectively, there will
be a trade-off, formalized later in Assumption~\ref{assu:existence}.
\end{example}

The regularity assumptions guarantee that, almost surely, a c\`adl\`ag
solution never explodes in the sense that, almost surely, $\psi_{\UU}(u_{t})$
is finite for all $t\in\TT$.
\begin{prop}
\label{prop:regular-means-regular}Suppose $(A,K,\nu_{0})$ is regular,
with $\psi_{\UU}$ and $\psi_{\AA}$ as in Definition~\ref{def:regular}.
Then for all $\pi\in\DdD(A,K,\nu_{0})$, $\int_{\AA}(1+\psi_{\UU}(u_{t})+\psi_{\AA}(a))\pi_{t}(\D a)<\infty$
for all $t\in\TT$, $\PP$-almost surely.
\end{prop}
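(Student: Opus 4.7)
The integrand $1 + \psi_{\UU}(u_{t}) + \psi_{\AA}(a)$ is a sum of three nonnegative terms, so I would split the integral into three pieces and bound each separately. The first piece gives $\int_{\AA} 1\,\pi_{t}(\D a) = 1$ because $\pi_{t}$ is a probability measure. The second piece factors out since the integrand does not depend on $a$: $\int_{\AA} \psi_{\UU}(u_{t})\pi_{t}(\D a) = \psi_{\UU}(u_{t})$, which is finite for every $t$ and every $\omega$ because $u_{t}$ takes values in $\UU$ and $\psi_{\UU}\in C(\UU)$ is real-valued. So the entire task reduces to showing that $\int_{\AA}\psi_{\AA}(a)\pi_{t}(\D a) < \infty$ $\PP$-a.s.

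For this, the plan is to take expectations and invoke the regularity bound \eqref{eq:regular-control-bound}. The small technical obstacle is that Definition~\ref{def:admissible} only asserts the identity $\int h\,\D\mu_{t} = \EE\int h\,\D\pi_{t}$ for $h \in C_{b}(\UU\times\AA)$, whereas $\psi_{\AA}$ is inf-compact and typically unbounded. I would get around this by truncation: for each $n\in\NN$, the function $(u,a)\mapsto \psi_{\AA}(a)\wedge n$ lies in $C_{b}(\UU\times\AA)$, so by admissibility
\begin{gather*}
\int_{\AA}(\psi_{\AA}(a)\wedge n)\,\mu_{t}^{\AA}(\D a) = \EE\!\left[\int_{\AA}(\psi_{\AA}(a)\wedge n)\,\pi_{t}(\D a)\right].
\end{gather*}
Letting $n\to\infty$, the monotone convergence theorem (applied on both sides, with respect to $\mu_{t}^{\AA}$ on the left and with respect to $\PP$ combined with $\pi_{t}$ on the right) yields $\EE\!\int_{\AA}\psi_{\AA}(a)\pi_{t}(\D a) = \int_{\AA}\psi_{\AA}(a)\mu_{t}^{\AA}(\D a)$.

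Finally, Definition~\ref{def:regular}(v), namely Eq.~\eqref{eq:regular-control-bound}, gives $\int_{\AA}\psi_{\AA}(a)\mu_{t}^{\AA}(\D a) \leq L_{\AA}\E^{\Lambda_{\AA}t} < \infty$ for each $t\in\TT$. Since the expectation of the nonnegative random variable $\int_{\AA}\psi_{\AA}(a)\pi_{t}(\D a)$ is finite, the variable itself is finite $\PP$-almost surely, completing the argument. The only step that required any real thought is the passage from bounded to unbounded test functions in the admissibility identity; once the continuity of $\psi_{\AA}$ is exploited to stay within $C_{b}$ after truncation, the rest is direct bookkeeping and does not use the martingale property or the pre-generator structure at all.
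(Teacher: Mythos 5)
Your proof is correct, and for the statement as literally written it takes a genuinely different and considerably more elementary route than the paper's. The paper does not split off the $\psi_{\UU}(u_{t})$ term as trivial: it runs a localization argument --- stopping times $\tau_{n}\deq\inf\{t\in\TT\mid\psi_{\UU}(u_{t})>n\}$, optional stopping applied to the martingales $m^{\phi_{k}}$, monotone convergence in $k$, and Gr\"onwall --- to obtain a quantitative non-explosion estimate of the form $\PP[\tau_{n}\leq t]\leq C_{t}/n$, with $C_{t}$ controlled by the regularity constants, and hence finiteness of $\psi_{\UU}(u_{\cdot})$ off a single null set valid for all $t$ simultaneously. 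Your observation that $\psi_{\UU}\in C(\UU)$ is real-valued and $u_{t}\in\UU$ by Definition~\ref{def:relaxed}, so that this term is finite pointwise, is legitimate for the proposition as stated; what it does not produce is the estimate itself, which is the actual payload of the paper's argument --- it is explicitly reused in the proof of Theorem~\ref{thm:d-f-equivalence}(\emph{i}), where the candidate process lives a priori in the compactification $\hat{\XX}$ and must be shown never to exit $\Gamma(\XX)$, a setting in which your shortcut is unavailable because one cannot assume in advance that the process stays in $\XX$. Your handling of the control term (truncating $\psi_{\AA}$ to stay inside $C_{b}(\UU\times\AA)$, the admissibility identity of Definition~\ref{def:admissible}, monotone convergence on both sides, then Eq.~(\ref{eq:regular-control-bound})) is precisely the step the paper compresses into ``a direct consequence of the regularity of $(A,K,\nu_{0})$,'' and you spell it out more carefully than the paper does. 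One quantifier caveat: for the $\psi_{\AA}$ term your argument yields, for each fixed $t$, a null set that may depend on $t$; if the conclusion is read as a single null set off which finiteness holds for every $t$ simultaneously, condition (\emph{v}) alone cannot deliver that (a Tonelli argument upgrades it to ``almost surely, for Lebesgue-a.e.\ $t$,'' which is what the downstream uses inside time integrals actually require) --- but this limitation applies equally to the paper's own one-line treatment of that term, so it is not a defect specific to your proof.
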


The regularity requirement is important, as it constrains the problems
we consider to those with well-behaved trajectories. While weaker
assumptions were used in the treatment of risk-neutral problems in
{e.g.} \cite{bhatt96,Kurtz1998}, our approach describes the costs associated
with each relaxed controlled solution via their distributions as given
by the forward equation, and for validity of this approach, a higher
degree of regularity is necessary.
\begin{rem}
We note the difference between the functions $\psi$, as given in
Assumption~\ref{cond:hasmarkov} and $(\psi_{\UU},\psi_{\AA})$,
given in Definition~\ref{def:regular}. The former describes how
large the functions in the range of the generator may be, while the
latter characterize how large values the solutions themselves may
take, cf. the bound given by Proposition~\ref{prop:regular-means-regular}.
\end{rem}

\subsection{Risk-aware objectives\label{sec:risk}}

Given a relaxed controlled problem $(G,K,\nu)$ on Polish state and
action spaces $\XX$ and $\AA$, we then suppose we are also provided
a cost rate function $c\in C(\XX\times\AA\times\TT)$, and in the
case of finite-horizon problems, a terminal cost function $v\in C(\XX)$.
In addition, we suppose we are given a risk function $\rho :
\Ll(\Omega;\RR) \to \RR_\infty$ that is defined on some reference
probability space $(\Omega, \Sigma, \PP)$. Since relaxed controlled
solutions in general come with their own probability spaces, we make
the restriction to law-invariant risk functions, so that the problem
is well-defined.
\begin{defn}
\label{def:law-invariance}Let $(\Omega,\Sigma,\PP)$ be a probability
space. A mapping $\rho\text{ : }\Ll(\Omega;\RR)\rightarrow\RR_{\infty}$
is \emph{law invariant }if there exists a function $\tilde{\rho}:\Pp(\RR)\to\RR_{\infty}$
such that $\rho(X)=\tilde{\rho}(\lL(X))$ for all $X\in\Ll(\Omega;\RR)$.
\end{defn}
The requirement that the risk functions are law invariant is very
mild, and is in practice essentially always satisfied.

Any law invariant risk function $\rho$ defined on random variables
of some fixed probability space $(\Omega,\Sigma,\PP)$ can be used
to evaluate the risk of random variables on any other $(\Omega^{\prime},\Sigma^{\prime},\PP^{\prime})$
by setting $\rho(X^{\prime})\deq\tilde{\rho}(\lL(X^{\prime}))$ for
all $X^{\prime}\in\Ll(\Omega^{\prime},\Sigma^{\prime},\PP^{\prime};\RR)$.
For law invariant risk functions we can then define the risk-aware
problem, Problem $\pPM$, as 
\begin{gather}
\begin{gathered}\inf_{\pi\in\DdD(G,K,\nu)}\limsup_{t\to\infty}\rho\left(\int_{0}^{t}\E^{-\alpha s}\int_{\AA}c(x_{s}^{\pi},a,s)\pi_{s}(\D a)\,\D s\right)\qquad(\TT=\RR_{\geq0}),\\
\inf_{\pi\in\DdD(G,K,\nu)}\rho\left(\int_{0}^{T}\int_{\AA}c(x_{s}^{\pi},a,s)\pi_{s}(\D a)\,\D s+v(x_{T}^{\pi})\right)\qquad(\TT=[0,T]).
\end{gathered}
\label{eq:ra-objective}
\end{gather}

By Definition~\ref{def:law-invariance}, a law invariant risk function
$\rho:\Ll(\Omega;\RR)\to\RR_{\infty}$ can be equivalently expressed
using a functional $\tilde{\rho}:\Pp(\RR)\to\RR_{\infty}$. Since our
dynamic analytic formulation constructs directly the distribution of
the input random variable representing total costs, it will sometimes
be more natural to consider the risk function as a functional on
distributions rather than random variables. We note that the
literature on risk functions typically favors the picture of a risk
function as functional on random variables. Indeed, properties of risk
functions such as coherence and convexity, important from both
practical applications and theoretical analysis points of view
\cite{Artzner99,Follmer02,Frittelli02}, are conventionally defined for
$\rho$ viewed as mappings from $\Ll(\Omega;\RR)$ to
$\RR_{\infty}$. Analogous properties can be defined for risk functions
on probability measures, or equivalently, for $\tilde{\rho}$
\cite{frittelli2014risk}, but in general, {e.g.} the convexity
properties of $\rho$ and $\tilde{\rho}$ can be very different. In
fact, convex risk functions generally have representations on measures
that are concave \cite{Acciaio2013}. Here, we shall not consider
questions such as the uniqueness of solutions, and we do not require
convexity of the risk functions.

Our baseline assumptions are then as follows.
\begin{assumption}
\label{assu:main}Let $\XX$ and $\AA$ be given Polish state and
action spaces, with $d$ denoting the metric on $\XX$. (\emph{i})
The generator $G:C_{b}(\XX)\supset\dD(G)\to\rR(G)\subset C(\XX\times\AA\times\TT)$,
admissible solutions $K$, and the initial distribution $\nu\in\Pp(\XX)$
are such that the relaxed controlled martingale problem $(G,K,\nu)$
is regular; (\emph{ii}) the cost rate function $c\in C(\XX\times\AA\times\TT)$
is non-negative, and there are $L_{c}>0$ and $\beta_{c}\geq\beta_{1}>1$
such that $c^{\beta_{c}}\leq L_{c}(1+\psi_{\XX}+\psi_{\AA})$; (\emph{iii})
if a finite time-horizon problem is considered, then we have a terminal
cost function $v\in C(\XX)$ that is non-negative, else we are given
a discount rate $\alpha>0$; (\emph{iv}) the risk function is law
invariant.
\end{assumption}

We will later require continuity of the risk functions, and in particular,
continuity of its representation on measures. The following shows
that if a risk function is continuous on random variables, then it
is continuous on measures, and similarly for lower semicontinuity. 
\begin{prop}
\label{prop:rho-rho-tilde-lsc}Let $(\Omega,\Sigma,\PP)$ be a probability
space, $\rho:\Ll^{p}(\Omega;\RR)\to\RR$, $p\in[1,\infty)$, and let
$\tilde{\rho}:\Pp^{p}(\RR)\to\RR$ be such that $\rho(X)=\tilde{\rho}(\lL(X))$
for all $X\in\Ll^{p}(\Omega;\RR)$. If $\rho$ is continuous (respectively
lower semicontinuous) in the strong, $\Vert\cdot\Vert_{p}$-norm topology,
then $\tilde{\rho}$ is continuous (respectively lower semicontinuous)
in the topology induced by the $p$-Wasserstein metric.
\end{prop}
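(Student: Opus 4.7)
The plan is to exploit the explicit description of $W^{p}$-convergence on $\Pp^{p}(\RR)$ in terms of quantile functions so as to lift a $W^{p}$-convergent sequence of measures to an $\Ll^{p}$-convergent sequence of random variables with the prescribed laws, after which the hypothesis on $\rho$ applies essentially directly. The key ingredient is the classical one-dimensional identity
\begin{gather*}
W^{p}(\mu,\nu)=\left(\int_{0}^{1}\bigl|F_{\mu}^{-1}(u)-F_{\nu}^{-1}(u)\bigr|^{p}\,\D u\right)^{1/p},\qquad\mu,\nu\in\Pp^{p}(\RR),
\end{gather*}
where $F_{\mu}^{-1}(u)\deq\inf\{x\in\RR\mid\mu((-\infty,x])\geq u\}$ is the left-continuous quantile function, which is standard (see \cite[Remark 2.19]{Villani2009}).

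Assuming, as is standard in the law-invariant risk setting and implicit in Definition~\ref{def:law-invariance} (which posits $\tilde{\rho}$ on \emph{all} of $\Pp(\RR)$), that $(\Omega,\Sigma,\PP)$ is sufficiently rich to support a uniform random variable $U\sim\mathrm{Uniform}(0,1)$, the pushforward $F_{\mu}^{-1}(U)$ lies in $\Ll^{p}(\Omega;\RR)$ with law $\mu$ for every $\mu\in\Pp^{p}(\RR)$. Given $\mu_{n}\to\mu$ in $(\Pp^{p}(\RR),W^{p})$, I would then set $X_{n}\deq F_{\mu_{n}}^{-1}(U)$ and $X\deq F_{\mu}^{-1}(U)$. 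By construction $\lL(X_{n})=\mu_{n}$ and $\lL(X)=\mu$, while the identity above yields $\Vert X_{n}-X\Vert_{p}=W^{p}(\mu_{n},\mu)\to0$. Law invariance then gives $\tilde{\rho}(\mu_{n})=\rho(X_{n})$ and $\tilde{\rho}(\mu)=\rho(X)$, and continuity of $\rho$ in $\Vert\cdot\Vert_{p}$ immediately delivers $\tilde{\rho}(\mu_{n})\to\tilde{\rho}(\mu)$, while lower semicontinuity of $\rho$ analogously delivers $\liminf_{n}\tilde{\rho}(\mu_{n})\geq\tilde{\rho}(\mu)$.

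The argument is short and its only real subtlety lies in the richness hypothesis on $(\Omega,\Sigma,\PP)$: if the base space fails to be atomless then not every $\mu\in\Pp^{p}(\RR)$ is realisable as a law, and the continuity or lsc conclusion can strictly be asserted only on the image of $X\mapsto\lL(X)$. Granting the richness that law-invariance already tacitly requires, no further machinery—neither weak convergence arguments nor a Skorokhod representation—is needed; the one-dimensional Wasserstein--quantile isometry does all the heavy lifting, which is why I expect no substantive obstacle in carrying the plan out.
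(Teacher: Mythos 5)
Your proposal is correct, but it takes a genuinely different route from the paper. The paper's proof starts from the fact that $W^{p}$-convergence implies weak convergence plus convergence of $p$th moments, invokes Skorokhod's representation theorem to produce, on an \emph{auxiliary} probability space, almost surely convergent representatives $\tilde{X}_{n}\to\tilde{X}$ with the prescribed laws, and then upgrades almost sure convergence to $\Vert\cdot\Vert_{p}$-convergence via a dominated-convergence argument before applying the (lower semi)continuity of $\rho$. You instead use the one-dimensional quantile isometry $W^{p}(\mu,\nu)^{p}=\int_{0}^{1}|F_{\mu}^{-1}(u)-F_{\nu}^{-1}(u)|^{p}\,\D u$ to build the representatives $X_{n}=F_{\mu_{n}}^{-1}(U)$ directly on the original space, getting $\Vert X_{n}-X\Vert_{p}=W^{p}(\mu_{n},\mu)$ exactly, with no appeal to Skorokhod or to moment-convergence arguments. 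What your approach buys is brevity and the fact that the representatives live on $(\Omega,\Sigma,\PP)$ itself, so the continuity hypothesis on $\rho$ applies verbatim; the price is that it is specific to real-valued laws (harmless here, since the target space is $\RR$) and requires $(\Omega,\Sigma,\PP)$ to support a uniform variable. The paper's route generalizes to Polish target spaces, but it places the representatives on a different space $\tilde{\Omega}$ and then applies the (lower semi)continuity of $\rho$ to them, which tacitly relies on the same law-invariance/richness considerations you flag explicitly; in that sense your honest acknowledgement of the atomlessness caveat is, if anything, a point in your favour. Both arguments are sound modulo that shared, standard hypothesis.
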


Continuity holds for many common risk functions. Indeed, convex risk
functions $\rho:\Ll^{p}(\Omega;\RR)\to\RR$ are norm-continuous \cite{Ruszczynski06},
and hence their representations in terms of functionals over measures
are also continuous.
\begin{example}
Returning to the problem of Example~\ref{exa:sde}, we can now consider
cost rate functions that satisfy Assumption~\ref{assu:main}. In
particular, the cost rate function $c(x,a,t)\deq1+|x|^{q_{1}}+|a|^{q_{2}}$,
or anything bound by this, for all $(x,a,t)\in\XX\times\AA\times\TT$
is admissible, if $q_{1}\leq2/\beta_{c}$, $q_{2}\leq2q\beta_{1}/\beta_{c}$
for some $\beta_{c}\geq\beta_{1}>1$. As examples of law invariant
risk functions, we mention here the entropic risk function $\rho^{\text{Ent}}:\Ll(\Omega;\RR)\to\RR_{\infty}$,
and the mean semi-deviation risk function $\rho^{\text{MD+}}:\Ll(\Omega;\RR)\to\RR_{\infty}$.
For an arbitrary reference probability space $(\Omega,\Sigma,\PP)$,
these are defined for any $X\in\Ll(\Omega;\RR)$ as
\begin{gather*}
\rho^{\text{Ent}}(X)\deq\frac{1}{\theta}\ln\EE\left[\E^{\theta X}\right],\\
\rho^{\text{MD+}}(X)\deq\EE\left[X\right]+\beta\EE\left[(X-\EE\left[X\right])_{+}\right],
\end{gather*}
and where $\theta\in(0,\infty)$ and $\beta\in[0,1]$ are parameters.
These have the following representations as functions on probability
measures: For all $\mu\in\Pp(\RR)$,
\begin{gather*}
\tilde{\rho}^{\text{Ent}}(\mu)\deq\frac{1}{\theta}\ln\left(\int\E^{\theta x}\mu(\D x)\right),\\
\tilde{\rho}^{\text{MD+}}(\mu)\deq\int x\mu(\D x)+\beta\int\left(x-\int x^{\prime}\mu(\D x)\right)_{+}\mu(\D x).
\end{gather*}
Other examples would include {e.g.} mean-variance risk functions, and
the conditional value-at-risk.
\end{example}

\section{Dynamic analytic formulation\label{sec:main}}


We can now construct our dynamic analytic formulation of the problem.
The first step is to find evolution equations for the joint
distribution of the controlled processes state and accumulated costs.

\paragraph*{Forward equation and time-dependent distributions}


The main tool for finding the time-dependent distribution
of a stochastic process is the Kolmogorov forward equation, which
we shall discuss next.
\begin{defn}
\label{def:kfe}We say that $\mu\in M(\TT,\Pp(\UU\times\AA))$ \emph{satisfies
the forward equation for initial condition $\nu_{0}\in\Pp(\UU)$ and
generator $A:C_{b}(\UU)\supset\dD(A)\to\rR(A)\subset C(\UU\times\AA\times\TT)$}
if (we recall our notation where superscripts on measures indicate
taking marginals)
\begin{align}
\int_{\UU}f(u)\mu_{t}^{\UU}(\D u)-\int_{\UU}f(u)\nu_{0}(\D u) & =\int_{0}^{t}\int_{\UU\times\AA}Af(u,a,s)\mu_{s}(\D u\times\D a)\,\D s, \label{eq:KFE}
\end{align}
for all $f\in\dD(A)$ and $t\in\TT$.  We use $\FfF(A,\nu_{0})\subset
M(\TT,\Pp(\UU\times\AA))$ to denote the set of solutions of
Eq.~(\ref{eq:KFE}) and constrained solutions of Eq.~(\ref{eq:KFE}) are
defined analogously to Definition~\ref{def:admissible}:
$\FfF(A,K,\nu_{0})\deq\FfF(A,\nu_{0})\cap K$, where $K\subset
M(\TT,\Pp(\UU\times\AA))$ is again the set of admissible solutions.
\end{defn}

\paragraph*{Cost distribution}

To evaluate a law invariant risk function appearing in the objective,
we need means for finding the distribution of the costs appearing
in Eq.~(\ref{eq:ra-objective}). The forward equation provides the
distribution of the state variables, and the same equation can be
co-opted to additionally yield the cost distribution. This is done
by introducing an extended forward equation corresponding to a given
martingale problem $(G,K,\nu)$ that gives the joint distribution
of the state and running costs, that is, cost accumulated up to a
given time $t\in\TT$.

We define $\YY\deq\RR_{\geq0}$ to stand for the state space of the
running costs, and consider the original state and the running costs
in parallel on the space $\XX\times\YY$. The equation for the joint
distribution of states and costs shall be the forward equation corresponding
to a new generator $H$, describing the joint evolution of the states
and costs: Let $c\in C(\XX\times\AA\times\TT)$ be the continuous
cost rate function, and let $\alpha\in\RR_{>0}$ ($\TT=\RR_{\geq0}$)
or $\alpha=0$ ($\TT=[0,T${]}) be the discount rate. For the given
generator $G:C_{b}(\XX)\supset\dD(G)\to\rR(G)\subset C(\XX\times\AA\times\TT)$,
we define $H:C_{b}(\XX\times\YY)\supset\dD(H)\to\rR(H)\subset C(\XX\times\YY\times\AA\times\TT)$
via 
\begin{gather}
  \begin{aligned}
    Hfg(x,y,a,t) &\deq g(y)Gf(x,a,t)+\E^{-\alpha t}c(x,a,t)\frac{\partial g}{\partial y}(y) \\
    & \qquad \forall(x,y,a,t)\in\XX\times\YY\times\AA\times\TT,\,fg\in\dD(H), \\
    \dD(H) &\deq\bigl\{ f(g + g_0) \bigm|f\in\dD(G),\,g\in C_{c}^{(1)}(\YY),\,g_0\in\RR\bigr\}.
  \end{aligned}
  \label{eq:augmented-generator}
\end{gather}
Recalling that we took $\nu\in\Pp(\XX)$ as the initial distribution
for the $\XX$-space process, we define $\upsilon\deq\nu\times\delta_{0}\in\Pp(\XX\times\YY)$,
where $\delta_{0}$ is the natural, point mass initial distribution
of the $\YY$-space process, as the initial distribution for the augmented
process.

For each $\pi\in\RrR(G,K,\nu)$, we associate a real-valued running
costs process $y^{\pi}=(y_{t}^{\pi})_{t\in\TT}$, defined
\begin{gather}
  y_{t}^{\pi}\deq\int_{0}^{t}\E^{-\alpha s}\int_{\AA}c(x_{s}^{\pi},a,s)\pi_{s}(\D a)\,\D s\quad\forall t\in\TT,\label{eq:running-costs}
\end{gather}
where $\alpha=0$ if $\TT=[0,T]$. The following theorem states that
under our baseline assumptions, considering c\`adl\`ag relaxed controlled
solutions $\DdD(G,K,\nu)$ together with costs $y^{\pi}$ as defined
in Eq.~(\ref{eq:running-costs}), is equivalent to considering solutions
to the forward equation for joint, time-dependent state-cost distributions,
$\FfF(H,K,\upsilon)$. That is, a solution for either (\emph{i}) the
martingale problem with running costs or (\emph{ii}) the extended
forward equation problem, can be used to construct a solution for
the other problem type. For brevity, we are using formally the same
set of admissible solutions $K\subset M(\TT,\Pp(\XX\times\AA))$ for
both problems; in $\FfF(H,K,\upsilon)$ the constraints are assumed
to hold for the $\XX\times\AA$-marginals of the $M(\TT,\Pp(\XX\times\YY\times\AA))$
solutions.
\begin{thm}
\label{thm:d-f-equivalence}Suppose Assumption~\ref{assu:main} holds,
so that $(G,K,\nu)$ is a regular controlled martingale problem on
the state-action space $\XX\times\AA$.

(\emph{i}) If $\mu\in\FfF(H,K,\upsilon)$, then there exists a c\`adl\`ag
relaxed controlled solution $\pi\in\DdD(G,K,\nu)$ and a cost process
$y^{\pi}$ defined by Eq.~(\ref{eq:running-costs}), such that the
finite dimensional distributions of $(x^{\pi},y^{\pi})$ are given
by $\mu^{\XX\times\YY}$, and with the control process satisfying
$\pi_{t}=\mu_{t}^{\AA\mid\XX\times\YY}(\cdot\mid x_{t},y_{t})$ for
all $t\in\TT$.

(\emph{ii}) If $\pi\in\DdD(G,K,\nu)$ and $y^{\pi}$ is the associated
costs process of Eq.~(\ref{eq:running-costs}), then $\mu_{t}\in M(\TT,\Pp(\XX\times\YY\times\AA))$
defined
\begin{align}
  \int_{\XX\times\YY\times\AA}h(x,y,a)\mu_{t}(\D x\times\D y\times\D a)
  = \EE\left[\int_{\AA}h(x_{t}^{\pi},y_{t}^{\pi},a)\pi_{t}(\D a)\right],\label{eq:equiv-converse}
\end{align}
for all $h\in C_{b}(\XX\times\YY\times\AA)$ and $t\in\TT$ is a
solution $\mu\in\FfF(H,K,\upsilon)$.
\end{thm}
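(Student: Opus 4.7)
The plan is to handle the two directions separately. For Part (ii), given $\pi \in \DdD(G,K,\nu)$ and any test function $f(g+g_0) \in \dD(H)$, I would exploit that $f(x^\pi_t)$ is a semimartingale with martingale part $m^f_t$ and drift $\int_0^t\!\int_\AA Gf(x^\pi_s,a,s)\pi_s(\D a)\,\D s$, while $y^\pi_t$ is continuous and of finite variation by~(\ref{eq:running-costs}). The covariation $[f(x^\pi),(g+g_0)(y^\pi)]$ therefore vanishes and the semimartingale integration-by-parts formula yields
\begin{align*}
  f(x^\pi_t)(g(y^\pi_t)+g_0) - f(x_0)(g(0)+g_0) &= \int_0^t (g(y^\pi_s)+g_0)\,\D m^f_s \\
  &\quad + \int_0^t\!\int_\AA H(f(g+g_0))(x^\pi_s,y^\pi_s,a,s)\,\pi_s(\D a)\,\D s.
\end{align*}
Taking expectations---boundedness of $g,g',g_0$ together with $|Gf|\le a_f\psi$ from Assumption~\ref{cond:hasmarkov}(iv), the growth bound on $c$ in Assumption~\ref{assu:main}(ii), Proposition~\ref{prop:regular-means-regular}, and the admissibility bound~(\ref{eq:regular-control-bound}) supplying integrability sufficient to kill the local-martingale term---and substituting the definition~(\ref{eq:equiv-converse}) of $\mu$ yields exactly the forward equation for $H$. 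Admissibility $\mu^{\XX\times\AA}\in K$ follows since this marginal coincides with the occupation measure of $\pi$ in Definition~\ref{def:admissible}.

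For Part (i) the strategy is a mimicking / Markov-selection argument. Starting from $\mu \in \FfF(H,K,\upsilon)$, regular conditional probabilities on the Polish space $\XX\times\YY\times\AA$ furnish a Borel transition kernel $q_t(x,y)(\D a) \deq \mu_t^{\AA\mid\XX\times\YY}(\D a\mid x,y)$. Averaging produces an uncontrolled, time-inhomogeneous generator $\tilde H\phi(x,y,t) \deq \int_\AA H\phi(x,y,a,t)\,q_t(x,y)(\D a)$; a direct substitution into Definition~\ref{def:kfe} shows that the marginal $\mu^{\XX\times\YY}$ solves the forward equation for $\tilde H$ with initial distribution $\upsilon=\nu\times\delta_0$. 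Provided the hypotheses of Assumption~\ref{cond:hasmarkov} lift from $G$ to $H$, Theorem~\ref{thm:ks} then produces a c\`adl\`ag process $(x,y)$ on $\XX\times\YY$ whose finite-dimensional distributions agree with $\mu^{\XX\times\YY}$ and which solves the martingale problem for $\tilde H$. Finally, set $\pi_t \deq q_t(x_t,y_t)$, which is progressively measurable.

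Verifying $(x,\pi) \in \DdD(G,K,\nu)$ together with~(\ref{eq:running-costs}) proceeds via two natural sub-families of $\dD(H)$. Choosing $g=0,\,g_0=1$ embeds any $f\in\dD(G)$ as $\tilde f(x,y)\deq f(x) \in \dD(H)$, for which $H\tilde f = Gf$; the $\tilde H$-martingale identity for $\tilde f$ is then exactly the $G$-martingale identity for $(x,\pi)$ with $\pi_s = q_s(x_s,y_s)$. Choosing $f=1,\,g_0=0,\,g\in C_c^{(1)}(\YY)$ gives $\tilde g(x,y) \deq g(y) \in \dD(H)$ with $H\tilde g(x,y,a,t) = \E^{-\alpha t}c(x,a,t)g'(y)$; since $c\ge 0$ makes $y$ nondecreasing, a localization $\tau_n\deq\inf\{t : y_t>n\}$ together with $g_n\in C_c^{(1)}(\YY)$ equal to the identity on $[0,n]$ identifies $y_t$ in the limit with the right-hand side of~(\ref{eq:running-costs}). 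Admissibility $\mu^{\XX\times\AA}\in K$ is inherited from $\mu\in\FfF(H,K,\upsilon)$.

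The main obstacle will be establishing that $H$ (and hence $\tilde H$) inherits Assumption~\ref{cond:hasmarkov}, so that Theorem~\ref{thm:ks} is applicable: the pre-generator property must be verified for the product structure of $H$, the dominating function $\psi$ must be enlarged to absorb the factor $c$ (Assumption~\ref{assu:main}(ii) is crucial here), and a countable bp-closure family for $H$ must be assembled from that for $G$ together with a countable dense subset of $C_c^{(1)}(\YY)$. A secondary difficulty is ensuring $y_t<\infty$ almost surely for every $t$ so that $\tau_n\uparrow\infty$ in the localization; this follows from the growth estimates of Assumptions~\ref{cond:hasmarkov}--\ref{assu:main} combined with Proposition~\ref{prop:regular-means-regular}, but the limit passage requires some care on compact time intervals.
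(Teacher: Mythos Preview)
Your overall strategy coincides with the paper's: Part~(ii) via a product rule on $f(x^\pi)\,g(y^\pi)$, and Part~(i) via the Kurtz--Stockbridge reconstruction (Theorem~\ref{thm:ks}) applied on the augmented state space $\XX\times\YY$. The paper organizes this by first packaging both the product-rule computation and the identification $y=y^\pi$ into Proposition~\ref{prop:augmentation-equivalence}, so that the proof of Theorem~\ref{thm:d-f-equivalence} itself is short. Your designation of lifting Assumption~\ref{cond:hasmarkov} from $G$ to $H$ as ``the main obstacle'' is reasonable; the paper is terse on this point.

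There are, however, two genuine technical gaps in your plan for Part~(i). First, Theorem~\ref{thm:ks} delivers a solution in $\RrR$, not in $\DdD$: the process comes in the form $\Gamma^{-1}(z_t)$ for a c\`adl\`ag $z$ on a compactification, and one must argue separately that $\Gamma^{-1}$ is continuous and that $z$ never leaves the image of the original space. The paper uses the \emph{strong} separation of points in Assumption~\ref{cond:hasmarkov}(iii) (via \cite[Lemma~1]{Blount2010}) to make $\Gamma$ a homeomorphism onto its image, and then invokes the no-explosion argument of Proposition~\ref{prop:regular-means-regular} to rule out escape to infinity. Your plan asserts c\`adl\`ag output from Theorem~\ref{thm:ks} without justification.

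Second, your localization for identifying the $y$-component with the integral in Eq.~(\ref{eq:running-costs}) is incomplete and partly circular. From the $H$-martingale problem, testing with $g\in C_c^{(1)}(\YY)$ only tells you that $m^g_t\deq g(y_t)-g(0)-\int_0^t \E^{-\alpha s}\!\int_\AA c\,g'(y_s)\,\pi_s(\D a)\,\D s$ is a \emph{martingale}, not that it is zero; passing to $g_n$ equal to the identity on $[0,n]$ then yields $y_t=\int_0^t\cdots\,\D s + M_t$ for a local martingale $M$, and the claim ``$c\ge0$ makes $y$ nondecreasing'' presupposes $M\equiv0$, which is exactly what you need to show. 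The paper closes this gap (inside Proposition~\ref{prop:augmentation-equivalence}) by a quadratic-variation trick: since $g^2$ is also a valid test function, one computes $\EE[(g(y_t)-g(y_s))^2]$ and shows the quadratic variation of $m^g$ vanishes, forcing $m^g\equiv0$; continuity of $y$ and the ODE identification then follow, after which your monotone limit $g_n\uparrow\mathrm{id}$ becomes legitimate.
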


The proof is deferred to the second half of this section. We can now
move on to state our main results.

\paragraph*{Main results}

We define the dynamic analytic problem, Problem~$\pPC$ as 
\begin{gather*}
\inf_{\mu\in\FfF(H,K,\upsilon)}\limsup_{t\to\infty}\tilde{\rho}\left(\mu_{t}^{\YY}\right)\qquad(\TT=\RR_{\geq0}),\\
\inf_{\mu\in\FfF(H,K,\upsilon)}\tilde{\rho}\left(\mu_{T}^{\XX\times\YY}\circ\Theta^{-1}\right)\qquad(\TT=[0,T]),
\end{gather*}
where $\Theta(x,y)\deq y+v(x)$ for all $(x,y)\in\XX\times\YY$, $\tilde{\rho}:\Pp(\RR)\to\RR$
is such that for the given risk function $\rho:\Ll(\Omega;\RR)\to\RR$,
$\rho(X)=\tilde{\rho}(\lL(X))$ for all $X\in\Ll(\Omega;\RR)$.

Two theorems comprise our main results. The first states that under
our baseline assumptions, Problems~$\pPM$ and~$\pPC$ are equivalent,
and optimal controls are Markov in the state, running costs, and time.
\begin{thm}
\label{thm:equivalence}If Assumption~\ref{assu:main} holds, then
the optimal values of Problems~$\pPM$ and~$\pPC$ are equal. If
there is a $\pP_{\Pp}$-optimal $\mu\in\FfF(H,K,\upsilon)$, then
there exists a $\pP_{\Ll}$-optimal $\pi\in\DdD(G,K,\nu)$ such that
$\pi_{t} = \mu_{t}^{\AA\mid\XX\times\YY}(\cdot\mid x_t^\pi,y_t^\pi)$
for all $(t,x,y)\in\TT\times\XX\times\YY$. That is, the control $\pi$
is Markov, depending only on time, state, and running costs.
\end{thm}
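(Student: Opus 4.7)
The plan is to deduce Theorem~\ref{thm:equivalence} from Theorem~\ref{thm:d-f-equivalence} by exploiting the law invariance of $\rho$. Since $\rho(X)=\tilde{\rho}(\lL(X))$, the objective of $\pPM$ at any $\pi\in\DdD(G,K,\nu)$ depends only on the law of the terminal cost $y_T^\pi+v(x_T^\pi)$ in the finite-horizon case, or on the laws of $y_t^\pi$ as $t$ ranges over $\TT$ in the infinite-horizon case. Theorem~\ref{thm:d-f-equivalence} lets me read these laws off from $\mu^{\XX\times\YY}$, which is exactly what appears in $\pPC$; the equivalence therefore reduces to a bookkeeping identification, requiring no continuity or topological assumption on $\tilde{\rho}$ at this stage.

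For one direction, I would take any $\pi\in\DdD(G,K,\nu)$ with its associated running cost process $y^\pi$ defined by Eq.~(\ref{eq:running-costs}) and apply Theorem~\ref{thm:d-f-equivalence}(ii) to produce a $\mu\in\FfF(H,K,\upsilon)$ whose $\XX\times\YY$-marginal agrees with the joint law of $(x^\pi,y^\pi)$ at every $t\in\TT$. In the finite-horizon case this yields $\mu_T^{\XX\times\YY}\circ\Theta^{-1}=\lL(y_T^\pi+v(x_T^\pi))$, so the two objective values coincide by law invariance. In the infinite-horizon case, $\mu_t^\YY=\lL(y_t^\pi)$ for every $t$, hence $\tilde{\rho}(\mu_t^\YY)=\rho(y_t^\pi)$ term-by-term and the two $\limsup$ expressions are literally the same sequence; passing to the infimum over $\pi$ then gives one inequality between the optimal values.

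For the reverse direction and the Markov-ness claim, I would start from any $\mu\in\FfF(H,K,\upsilon)$ and invoke Theorem~\ref{thm:d-f-equivalence}(i) to obtain a c\`adl\`ag solution $\pi\in\DdD(G,K,\nu)$ together with its cost process $y^\pi$ such that the finite-dimensional distributions of $(x^\pi,y^\pi)$ match $\mu^{\XX\times\YY}$ and $\pi_t=\mu_t^{\AA\mid\XX\times\YY}(\cdot\mid x_t^\pi,y_t^\pi)$. The same marginal and pushforward identities yield equality of objective values, giving the reverse inequality and hence equality of the optimal values of $\pPM$ and $\pPC$. Applied to any $\pP_{\Pp}$-optimal $\mu$, this construction immediately delivers a $\pP_{\Ll}$-optimal $\pi$ whose control is, by definition, a measurable function of $(t,x_t^\pi,y_t^\pi)$, which is the required Markov property.

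The main obstacle I expect is conceptual rather than technical: ensuring the correspondence respects both the admissibility constraint $K$ and the infimum operation simultaneously. Since $K$ is imposed on the $\XX\times\AA$-marginal on both sides of the correspondence by the convention noted just before the statement of Theorem~\ref{thm:d-f-equivalence}, and since law invariance identifies objective values exactly, no limit, approximation, or lower-semicontinuity argument beyond the content of Theorem~\ref{thm:d-f-equivalence} should be required for this equivalence statement. The more delicate \emph{existence} of a $\pPC$-optimizer, which would actually realize the infimum as a minimum, is a separate matter left for the subsequent attainment theorem.
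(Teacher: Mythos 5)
Your proposal is correct and follows essentially the same route as the paper: both directions of Theorem~\ref{thm:d-f-equivalence} combined with law invariance of $\rho$ give equality of the optimal values, and applying part (\emph{i}) to a $\pP_{\Pp}$-optimal $\mu$ yields the $\pP_{\Ll}$-optimal Markov control of the stated form. Your write-up is in fact somewhat more explicit than the paper's (which compresses this to three sentences), but the content is identical.
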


While Theorem~\ref{thm:equivalence} guarantees that the forward
equation formulation, Problem~$\pPC$, yields the same optimal value
as solutions of Problem~$\pPM$, it does not establish the existence
of solutions. The following theorem and our second main result gives
sufficient conditions for there to be a $\mu\in\FfF(H,K,\upsilon)$
that attains the optimal value, provided the next assumptions hold.
\begin{assumption}
\label{assu:existence}Assumption~\ref{assu:main} holds, and additionally:
(\emph{i}) $\dD(G)\subset C_{bl}(\XX)$, and if $a_{f}$ is as in
Assumption~\ref{cond:hasmarkov}(\emph{iv}), then $f\to a_{f}$ defines
a seminorm on $\dD(G)$ and $a_{f}\geq\Vert f\Vert_{l}$, the Lipschitz
constant, for all $f\in\dD(G)$; (\emph{ii}) in the infinite time-horizon
case, the discount rate satisfies $\alpha>(\Lambda_{1}+\text{\ensuremath{\Lambda}}_{\AA})/\beta_{c}$;
(\emph{iii}) either (\emph{a}), the risk function $\rho:\Ll(\Omega;\RR)\to\RR_{\infty}$
is bounded from below, and continuous and coercive on $\Ll^{p}(\Omega;\RR)$
for some $p\in[1,\infty)$, that is, $\Vert X\Vert_{p}\to\infty$
implies $\rho(X)\to\infty$, or (\emph{b}), the cost rate function,
and the terminal cost function if $\TT=[0,T]$, are bounded, the risk
function $\rho$ is finite for compactly supported random variables,
and its representation $\tilde{\rho}$ on measures is continuous in
the topology of weak convergence.

If a finite time horizon problem is considered, then the continuity
of $\rho$ or $\tilde{\rho}$ may be replaced by lower semicontinuity.
\end{assumption}

The condition that $\dD(G)$ is a subset of bounded Lipschitz functions
and that $a_{f}$ is a seminorm bounded by $\Vert\cdot\Vert_{l}$
is used to construct a metric on probability measures that allows
us to prove uniform convergence and equicontinuity of families of
solutions to the forward equation. Note that {e.g.} the $a_{f}$ obtained
in Example~\ref{exa:sde} is indeed a seminorm bounded from below
by the Lipschitz constants. The lower bound on the discount rate $\alpha$
is needed to ensure that the cost distributions become stationary
as time tends to infinity. Intuitively, the exponent $\beta_{c}$
describes how fast the cost rate $c$ grows relative to the growth
of the solutions, represented by $\psi$, cf. Assumption~\ref{assu:main}.
The part $\Lambda_{1}+\text{\ensuremath{\Lambda}}_{\AA}$ in turn
gives the growth rate of $\psi_{\XX}+\psi_{\AA}$, as given by Definition~\ref{def:regular}(\emph{i}).
Hence, the inequality describes the balance between the growth of
costs and the rate of discounting; satisfying it guarantees that the
risks converge rather than oscillate as time tends to infinity. Continuity
or lower semicontinuity of the risk function is naturally necessary,
as we will be taking limits of minimizing sequences. Part (\emph{iii})
of the assumption is split into (\emph{a}) and (\emph{b}) alternatives
and the latter case is included to accommodate risk functions defined
on essentially bounded random variables, that is, the case where $p=\infty$.
\begin{thm}
\label{thm:existence}Suppose Assumption~\ref{assu:existence} holds,
and let $\rho^{\ast}\in\RR_{\infty}$ be the $\pP_{\Pp}$-optimal
value. If $\FfF(H,K,\upsilon)\neq\emptyset$, then there is a $\mu\in\FfF(H,K,\upsilon)$
that attains $\rho^{\ast}$.
\end{thm}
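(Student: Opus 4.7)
My plan is to follow the direct approach for existence of minimizers: take a minimizing sequence $(\mu^{(n)})_{n\in\NN}\subset\FfF(H,K,\upsilon)$ with objective values converging to $\rho^\ast$, establish tightness of the sequence in a topology under which (a) $\FfF(H,K,\upsilon)$ is sequentially closed and (b) the objective is sequentially lower semicontinuous, extract a convergent subsequence with limit $\mu^\ast$, and conclude that $\mu^\ast$ attains $\rho^\ast$. The natural topology is the weak topology $\warrow$ on $M(\TT,\Pp(\XX\times\YY\times\AA))$ (refined to $\ccarrow$ where needed), paired with $p$-Wasserstein convergence of the relevant time-marginals under $\Theta$.

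For the tightness step, I would use Theorem~\ref{thm:d-f-equivalence} to associate to each $\mu^{(n)}$ a c\`adl\`ag solution $\pi^{(n)}\in\DdD(G,K,\nu)$ with running costs $y^{(n)}$ as in Eq.~(\ref{eq:running-costs}). Regularity of $(G,K,\nu)$, together with Definition~\ref{def:regular}(i), (iv), (v) and the test functions $\phi_n$, yields moment bounds of the form $\EE[\psi_{\XX}(x_t^{(n)})]\leq L_\XX+\Lambda_1\int_0^t\EE[1+\psi_\XX(x_s^{(n)})+\psi_\AA(\text{controls})]\,\D s$, which by Gr\"onwall grow at most exponentially with rate $\Lambda_1+\Lambda_\AA$. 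Combined with $c^{\beta_c}\leq L_c(1+\psi_\XX+\psi_\AA)$ from Assumption~\ref{assu:main}(ii) and Jensen's inequality, I expect to obtain $\sup_{t\in\TT}\sup_n\EE[(y_t^{(n)})^{\beta_c}]<\infty$ in the infinite-horizon case by the discount condition $\alpha>(\Lambda_1+\Lambda_\AA)/\beta_c$ in Assumption~\ref{assu:existence}(ii), and uniform bounds on each compact subinterval for the finite-horizon case. Because $\psi_\XX,\psi_\AA$ are inf-compact (Definition~\ref{def:regular}(iii)), Markov's inequality produces tight time-$t$ marginals on $\XX\times\YY\times\AA$, uniformly over $n$, and a diagonal extraction along a countable dense subset of $\TT$ provides a subsequence converging in the weak topology on $M(\TT,\Pp(\XX\times\YY\times\AA))$ to some $\mu^\ast$.

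For closedness of $\FfF(H,K,\upsilon)$: the part of $K$ is Definition~\ref{def:regular}(vi). For the forward equation~\eqref{eq:KFE} with generator $H$ from~\eqref{eq:augmented-generator}, I would pass to the limit by testing against $fg\in\dD(H)$ with $f\in\dD(G)$ and $g\in C_c^{(1)}(\YY)$. By Assumption~\ref{assu:existence}(i), $f$ is bounded Lipschitz, $g$ compactly supported, and $Hfg$ is continuous with growth controlled by $\psi$; the uniform $\beta_1$-moment bounds established above give uniform integrability of $(u,a,s)\mapsto Hfg(u,y,a,s)$ against the $\mu^{(n)}_s$, and a truncation argument lets me pass from compactly supported $h\in C_b(\XX\times\YY\times\AA\times\TT)$ to the unbounded test functions required by~\eqref{eq:KFE}. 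Thus $\mu^\ast\in\FfF(H,K,\upsilon)$.

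For lower semicontinuity of the objective, I would argue as follows. In both cases, uniform $\beta_c$-moment bounds on $y_t$ together with the weak convergence of time-marginals imply $p$-Wasserstein convergence of $\mu^{(n_k)\,\YY}_t$ (resp. $\mu^{(n_k)\,\XX\times\YY}_T\circ\Theta^{-1}$) for any $p<\beta_c$, by the standard characterization of Wasserstein convergence via weak convergence plus convergence of moments. Under Assumption~\ref{assu:existence}(iii)(a), Proposition~\ref{prop:rho-rho-tilde-lsc} promotes $\Ll^p$-continuity (or lsc) of $\rho$ to continuity (or lsc) of $\tilde\rho$ in the Wasserstein topology, so $\tilde\rho(\mu^\ast_T\circ\Theta^{-1})\leq\liminf_k\tilde\rho(\mu^{(n_k)}_T\circ\Theta^{-1})=\rho^\ast$. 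Under (iii)(b), costs are bounded so the $\mu^{\YY}_t$ are supported in a common compact set and weak continuity of $\tilde\rho$ applies directly. For the infinite-horizon limsup, I would use that the processes $y^{(n)}$ are monotone nondecreasing with $\sup_t\EE[(y_t^{(n)})^{\beta_c}]$ uniformly bounded, so $y^{(n)}_t\uparrow y^{(n)}_\infty$ in $\Ll^p$ for $p<\beta_c$; applied to $\mu^\ast$, this turns $\limsup_{t\to\infty}\tilde\rho(\mu^{\ast\,\YY}_t)$ into an honest limit $\tilde\rho(\mu^{\ast\,\YY}_\infty)$, against which the liminf inequality can be applied.

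The main obstacle I anticipate is coordinating the three growth exponents $\beta_1,\beta_c,\alpha$: the Wasserstein/moment convergence of the $\YY$-marginals needed for lsc must be compatible with the uniform integrability needed to pass the unbounded integrand $Hfg$ to the limit in~\eqref{eq:KFE}, and both must be compatible with the exponential control-moment bound $L_\AA\E^{\Lambda_\AA t}$. The precise form of Assumption~\ref{assu:existence}(ii), namely $\alpha\beta_c>\Lambda_1+\Lambda_\AA$, is exactly what is needed for these bounds to close up in the infinite-horizon case, but verifying the uniform integrability in the limit passage, and in particular ruling out escape of mass to infinity in the $\YY$-direction as $t\to\infty$, is where I expect the technical heart of the proof to lie.
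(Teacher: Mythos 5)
Your overall architecture (minimizing sequence, tightness, closedness of $\FfF(H,K,\upsilon)$ under weak limits, lower semicontinuity of the objective via Wasserstein convergence of the cost marginals) is the same as the paper's, which packages the compactness and limit-passage steps into Lemma~\ref{lem:fatso} and then handles the objective in the proof of Theorem~\ref{thm:existence}. Your alternative devices --- diagonal extraction over a dense time set plus uniform integrability and truncation in place of the paper's reweighted measures $\kappa^{(n)}$ and the metrics $d_{\Hh}$, $d_{\Cc}$ --- are workable substitutes for the closedness step.

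There is, however, a genuine gap in how you obtain the moment bound on the running costs. You derive $\sup_{t}\sup_{n}\EE[(y_{t}^{(n)})^{\beta_{c}}]<\infty$ from the growth conditions and the discount inequality, and consequently you only get $W^{p}$-convergence of the $\YY$-marginals for $p<\beta_{c}$. But Assumption~\ref{assu:existence}(\emph{iii}a) only asserts continuity and coercivity of $\rho$ on $\Ll^{p}(\Omega;\RR)$ for \emph{some} $p\in[1,\infty)$, which may well exceed $\beta_{c}$; in that case your Gr\"onwall/Jensen estimate does not control the moments of the order needed to invoke Proposition~\ref{prop:rho-rho-tilde-lsc}, and the lower semicontinuity step fails. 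The paper closes this by using the coercivity of $\rho$ directly: along a minimizing sequence $\rho(\mu_{t}^{(n)\,\YY})\leq\rho^{\ast}+1$ eventually, so coercivity forces a uniform bound $\limsup_{t\to\infty}\int y^{p}\mu_{t}^{(n)\,\YY}\leq Y$ for precisely the exponent $p$ for which $\tilde{\rho}$ is $W^{p}$-continuous. You never invoke coercivity, yet it is in the assumption exactly for this purpose. A second, smaller issue is the infinite-horizon interchange of limits: to conclude $\tilde{\rho}(\mu_{\infty}^{(\infty)\,\YY})=\lim_{n}\limsup_{t}\tilde{\rho}(\mu_{t}^{(n)\,\YY})$ you need $\mu_{\infty}^{(n)\,\YY}\Rightarrow\mu_{\infty}^{(\infty)\,\YY}$, which requires the $t\to\infty$ convergence to be uniform in $n$; monotonicity of each $y^{(n)}$ alone does not give this. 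The uniform rate is available from the same exponential estimate (the paper's $\eta(s,t)$, valid uniformly in $n$ because $L_{\psi}$, $\Lambda_{\psi}$ are independent of $n$), followed by a three-epsilon argument in a complete metric on $\Pp(\YY)$, but this step needs to be made explicit rather than asserted.
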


The result of Theorem~\ref{thm:existence} immediately implies that
the corresponding relaxed controlled martingale problem, $(G,K,\nu)$,
has an optimal solution for which the control process is Markov in
time, state, and running costs.

\paragraph*{Proofs of main results}

We begin with the proof of Theorem~\ref{thm:d-f-equivalence}. To
this end, we first give a pair of auxiliary results, first one stating
that c\`adl\`ag relaxed controlled solutions to $(G,K,\nu)$, together
with the associated costs processes, are in a sense equivalent to
c\`adl\`ag relaxed controlled solutions to the augmented problem, $(H,K,\upsilon)$.
Some proofs are deferred to the Appendix.
\begin{prop}
\label{prop:augmentation-equivalence}Suppose Assumption~\ref{assu:main}
holds and $\pi$ is a c\`adl\`ag relaxed controlled solution to the problem
$(H,K,\upsilon)$, $\pi\in\DdD(H,K,\upsilon)$. Then $(\Omega^{\pi},\Sigma^{\pi},\Ff^{\pi},\PP^{\pi},x^{\pi},\pi)$
is a c\`adl\`ag relaxed controlled solution to $(G,K,\nu)$, and defining
$\hat{y}_{t}^{\pi}\deq\int_{0}^{t}\E^{-\alpha s}\int c(x_{s}^{\pi},a,s)\pi_{s}(\D a)\,\D s$
for all $t\in\TT$, we have that of $y^{\pi}$ and $\hat{y}^{\pi}$
are indistinguishable. Conversely, if $\pi$ is a c\`adl\`ag relaxed controlled
solution to the problem $(G,K,\nu)$, and $y^{\pi}$ the corresponding
running costs process, then $(\Omega^{\pi},\Sigma^{\pi},\Ff^{\pi},\PP^{\pi},(x^{\pi},y^{\pi}),\pi)\in\DdD(H,K,\upsilon)$.
\end{prop}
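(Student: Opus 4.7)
The plan is to exploit the product structure of the test-function space $\dD(H)=\{f(g+g_0):f\in\dD(G),\,g\in C_c^{(1)}(\YY),\,g_0\in\RR\}$ and the corresponding product/drift decomposition of the generator $H$, and to use standard It\^o calculus on the augmented state space $\XX\times\YY$.

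\emph{Forward direction.} Given $\pi\in\DdD(H,K,\upsilon)$, for each $f\in\dD(G)$ I first instantiate the $H$-martingale problem at the element $f\cdot 1\in\dD(H)$ (i.e., $g\equiv 0$, $g_0=1$): then $H[f\cdot 1](x,y,a,t)=Gf(x,a,t)$ since the $y$-derivative vanishes, so the $H$-martingale identity is literally the $G$-martingale identity for $f$. Combining this with $\upsilon^{\XX}=\nu$, the fact that the $\XX$-projection of a c\`adl\`ag $\XX\times\YY$-valued process is c\`adl\`ag, and that $K$ is a constraint on the $\XX\times\AA$-marginal in both problems, yields $\pi\in\DdD(G,K,\nu)$. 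To identify $y^\pi$ with $\hat y^\pi$, I apply the $H$-martingale problem at $1\cdot g\in\dD(H)$ for $g\in C_c^{(1)}(\YY)$; since $G1\equiv 0$ by Assumption~\ref{cond:hasmarkov}(\emph{i}), this states that
\[
g(y_t^\pi)-g(0)-\int_0^t\int_{\AA}\E^{-\alpha s}c(x_s^\pi,a,s)\,g'(y_s^\pi)\,\pi_s(\D a)\,\D s
\]
is an $\Ff^\pi$-martingale. Choosing sequences $g_n,h_n\in C_c^{(1)}(\YY)$ with $g_n(y)=y$ and $h_n(y)=y^2$ on $[0,n+1]$ and localizing at $\tau_n:=\inf\{t:y_t^\pi>n\}$, one obtains on $[0,\tau_n]$ that $N:=y^\pi-\hat y^\pi$ is a martingale (from $g_n$) and that $(y^\pi)^2-2\int_0^{\cdot}y_s^\pi\,\D\hat y_s^\pi$ is a martingale (from $h_n$). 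Applying the It\^o product rule to $(y^\pi)^2$, using that $\hat y^\pi$ is continuous and of bounded variation (so $[\hat y^\pi]\equiv 0\equiv[N,\hat y^\pi]$), and comparing the two identities forces $[N]_{\cdot\wedge\tau_n}$ to be a nonnegative, non-decreasing local martingale starting at zero, hence identically zero. Thus $N\equiv 0$ on $[0,\tau_n]$; since $\hat y^\pi$ is a.s.\ finite on $\TT$, $\tau_n\uparrow\infty$ and $y^\pi=\hat y^\pi$ indistinguishably.

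\emph{Converse direction.} Given $\pi\in\DdD(G,K,\nu)$, define $y^\pi$ by \eqref{eq:running-costs}; the resulting $y^\pi$ is absolutely continuous, so $(x^\pi,y^\pi)$ is c\`adl\`ag almost surely with initial law $\nu\times\delta_0=\upsilon$, and admissibility transfers directly since the $\XX\times\AA$-marginal is unchanged. For any $f(g+g_0)\in\dD(H)$, the It\^o product rule applied to the semimartingale $f(x_t^\pi)$ (semimartingale by the $G$-martingale problem) and the continuous, bounded-variation process $g(y_t^\pi)+g_0$ yields, with vanishing covariation,
\[
f(x_t^\pi)(g(y_t^\pi)+g_0)-f(x_0^\pi)(g(0)+g_0)=\int_0^t\int_{\AA}H[f(g+g_0)](x_s^\pi,y_s^\pi,a,s)\,\pi_s(\D a)\,\D s+\int_0^t(g(y_s^\pi)+g_0)\,\D M_s^f,
\]
where $M^f$ is the $G$-martingale attached to $f$; boundedness of $g+g_0$ makes the stochastic integral a true martingale, establishing the $H$-martingale identity and thus $(x^\pi,y^\pi,\pi)\in\DdD(H,K,\upsilon)$.

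The principal obstacle is the pathwise identification $y^\pi=\hat y^\pi$ in the forward direction: the linear map $y\mapsto y$ is not an admissible test function (it lacks compact support), so one cannot simply read the identity off the martingale problem and must instead combine a localization with a second-moment test function and a quadratic-variation computation. The remainder of the argument is routine It\^o calculus on the product space.
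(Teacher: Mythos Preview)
Your forward direction is essentially the paper's argument recast in It\^o-calculus language: both show that the quadratic variation of $g(y^\pi)$ (hence of $y^\pi$ after localization) vanishes, so that the $H$-martingale identity collapses to the integral formula. The paper does this by a direct partition computation of $\EE[(g(y_t^\pi)-g(y_s^\pi))^2]$ rather than by invoking the product rule, but the content is the same.

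The converse direction is where you diverge from the paper, and there is a genuine gap. Your claim that ``boundedness of $g+g_0$ makes the stochastic integral a true martingale'' is not justified: a bounded predictable integrand against an $L^1$-martingale $M^f$ yields only a \emph{local} martingale in general, and upgrading requires either $M^f\in H^1$ (or $L^2$) or a class-(DL) verification. Here $M^f_t=f(x_t^\pi)-f(x_0^\pi)-\int_0^t\int_\AA Gf\,\pi_s(\D a)\,\D s$ with $|Gf|\le a_f\psi$ unbounded, so none of this is automatic. The paper explicitly flags this obstacle---``the problem here does not quite satisfy the boundedness conditions of that result'' (referring to the product-rule lemma in Ethier--Kurtz)---and for that reason avoids stochastic integrals entirely: it computes $\EE[f(x_t^\pi)g(y_t^\pi)-f(x_s^\pi)g(y_s^\pi)\mid\Ff_s]$ by a telescoping partition sum and controls the remainder $R_{t,s}$ via H\"older's inequality with exponent $\beta_1$, using the regularity bound $|Gf|^{\beta_1}\le a_f^{\beta_1}L_1(1+\psi_\XX+\psi_\AA)$ from Definition~\ref{def:regular}(\emph{i}) together with Proposition~\ref{prop:regular-means-regular}. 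Your It\^o route can be salvaged by the same estimate (it gives $\EE\int_0^t|Gf|^{\beta_1}\,\D s<\infty$, hence $M^f$ is locally in $L^{\beta_1}$ with $\beta_1>1$, which suffices), but as written the step is incomplete.
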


Proof of Proposition~\ref{prop:augmentation-equivalence} is given in Appendix~\ref{subsec:proofs-s4}.
For the proof of Theorem~\ref{thm:d-f-equivalence}, we rely on the
results of \cite{Kurtz2001}.
\begin{thm}
\label{thm:ks}\cite[Theorem 1.11, Corollary 1.12]{Kurtz2001} Let
$(A,\nu_{0})$ be a relaxed controlled martingale problem, and suppose
$(\UU,\AA,A)$ and $\psi$ satisfy Assumption~\ref{cond:hasmarkov}.
If $\mu\in\FfF(A,\nu_{0})$ is a solution of the forward equation satisfying
\begin{gather}
\int_{0}^{t}\int_{\UU\times\AA}\psi(u,a,s)\,\mu_{s}(\D u\times\D a)\,\D s<\infty\quad\forall t\in\TT,\label{eq:psifin}
\end{gather}
then there exists a relaxed controlled solution $(u_{t}^{\pi},\pi_{t})_{t\in\TT}\in\RrR(A,\nu_{0})$
such that $\lL(u_{t})=\mu_{t}^{\UU}$ and $\pi_{t}=\mu_{t}^{\AA\mid\UU}(\cdot\mid u_{t})$
for all $t\in\TT$.
\end{thm}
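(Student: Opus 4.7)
The plan is to adapt the compactification-and-approximation strategy that underlies \cite{Kurtz2001}. The statement is delicate because one is handed only a family of one-dimensional, two-variable marginals $\mu_t$ on $\UU \times \AA$, and one must assemble from these a genuine $\UU$-valued stochastic process whose time-$t$ law is $\mu_t^\UU$ and whose relaxed controls realize the disintegration $\mu_t^{\AA\mid\UU}(\cdot\mid u_t)$. I would split the argument into three stages: (a) reduce the controlled problem to an uncontrolled, time-inhomogeneous one by conditioning on the state; (b) construct a process with the prescribed marginals on a compact auxiliary space; (c) transfer back to $\UU$ using the growth bound~(\ref{eq:psifin}).

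For stage (a), I disintegrate $\mu_t(\D u \times \D a) = \mu_t^{\AA\mid\UU}(\D a\mid u)\,\mu_t^\UU(\D u)$ using that $\UU$ and $\AA$ are Polish, and set $\bar A f(u,t) \deq \int_\AA A f(u,a,t)\, \mu_t^{\AA\mid\UU}(\D a\mid u)$. The forward equation~(\ref{eq:KFE}) then collapses into an uncontrolled, time-inhomogeneous forward equation for $\mu^\UU$ with generator $\bar A$. Consequently it suffices to produce a process $u$ whose one-dimensional laws are $\mu_t^\UU$ and which solves the martingale problem for $\bar A$; once this is done, defining $\pi_t \deq \mu_t^{\AA\mid\UU}(\cdot\mid u_t)$ and applying Fubini in~(\ref{eq:martingale}) recovers the martingale property for the original generator $A$, and the integrability~(\ref{eq:psifin}) is exactly what is needed to justify the interchange.

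For stage (b), I invoke Assumption~\ref{cond:hasmarkov}(ii), (iii), (v) to build a compact Polish extension $\hat\UU$ together with an embedding $\Gamma : \UU \to \hat\UU$ that is a homeomorphism onto its image with measurable inverse, on which $\psi^{-1}\bar A$ lifts to an operator $\hat B$ satisfying the positive maximum principle. The pre-generator structure and the bp-closure condition are used precisely to extend $\bar A$ to $\hat\UU$, while strong separation of points upgrades $\Gamma$ from a measurable injection to a homeomorphism. On $\hat\UU$ I then construct the process by Markov-chain approximation: discretize time with mesh $1/n$ and use the pre-generator representation $h(u) = \lim_n \lambda_n(u) \int (f(u')-f(u))\,\mu_n(u)(\D u')$ to define one-step transition kernels whose chain-generators approximate $\hat B$. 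Tightness on $D(\TT,\hat\UU)$ follows from compactness of $\hat\UU$, any weak limit solves the martingale problem for $\hat B$, and the bound~(\ref{eq:psifin}) prevents mass from escaping $\Gamma(\UU)$.

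The main obstacle, and the step demanding the most care, is the identification of the one-dimensional marginals of the limiting process with $\mu_t^\UU$: the (lifted) forward equation alone does not pin down a single process, so the approximating chains must be coupled to the prescribed marginals. I would implement this by initializing the $n$-th chain at time $k/n$ with a draw from $\mu_{k/n}^\UU$ and using a transition kernel whose generator matches $\hat B$ to order $1/n$, so that the chain's one-dimensional marginals on the mesh agree with $\mu_{k/n}^\UU$ by construction; a tightness-plus-continuity argument, plus uniqueness of solutions to the lifted forward equation within the class with matching marginals, then forces any weak limit to have marginals $\Gamma_\ast \mu_t^\UU$ for every $t \in \TT$. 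Pulling back by $\Gamma^{-1}$ yields the $\UU$-valued process $u^\pi$ with law $\mu_t^\UU$ at each time; setting $\pi_t \deq \mu_t^{\AA\mid\UU}(\cdot\mid u_t)$ and verifying the martingale identity against $A$ closes the construction.
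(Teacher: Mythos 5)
You should first note that the paper does not prove this statement at all: it is imported from the literature (Kurtz--Stockbridge), and the only argument the paper supplies is the Remark immediately following it, which explains how the cited uncontrolled result---stated for the generator $\hat{A}f(u,t)=\int_{\AA}Af(u,a,t)\,\eta(\D a\mid u)$ obtained by integrating $A$ against a transition function---translates into the controlled form given here. Your stage (a) is exactly that translation, and it is fine: disintegrating $\mu_{t}$, defining $\bar{A}$, and recovering the martingale property for $A$ from that for $\bar{A}$ via Fubini (justified by Eq.~(\ref{eq:psifin})) is precisely the reduction the authors have in mind. So up to that point you are reconstructing the paper's (one-remark) proof.

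Stages (b) and (c), however, attempt to reprove the cited external theorem, and there is a genuine gap at the marginal-identification step. Your plan is to initialize the $n$-th approximating chain at each mesh time $k/n$ with a fresh draw from $\mu_{k/n}^{\UU}$ and then invoke ``uniqueness of solutions to the lifted forward equation within the class with matching marginals.'' Neither half works. Re-initializing at every mesh point does not produce a single c\`adl\`ag process on $D(\TT,\hat{\UU})$ whose weak limit solves the martingale problem---the re-initialization destroys the path structure, and a one-step kernel matching $\hat{B}$ only to order $1/n$ does not map $\mu_{k/n}^{\UU}$ to $\mu_{(k+1)/n}^{\UU}$ exactly, so the marginals do not ``agree by construction.'' And the uniqueness you appeal to is unavailable and circular: forward equations for pre-generators need not have unique solutions, and ``matching marginals'' is the conclusion to be established, not a class within which one can select. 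The actual argument in the cited reference takes a different route: it first proves a stationary (Echeverr\'ia-type) existence theorem for the controlled martingale problem, then reduces the time-dependent forward equation to a stationary problem on a space--time augmentation $\UU\times\TT$ (using Eq.~(\ref{eq:psifin}) to normalize $\psi^{-1}A$ into a bounded operator), and identifies the one-dimensional marginals via the Markov mapping theorem rather than by coupling approximating chains to the prescribed marginals. If you intend to give a self-contained proof rather than cite the result, that is the machinery you would need to reproduce.
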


\begin{rem}
In the given reference, this result is stated as applying to uncontrolled
problems, in particular, to an uncontrolled generator $\hat{A}:C_{b}(\UU)\supset\dD(\hat{A})\to\rR(\hat{A})\subset M(\UU)$.
We note however that the generator $\hat{A}$ is constructed from
a controlled generator $A$ satisfying an equivalent of our Assumption~\ref{cond:hasmarkov}
by integrating it over a transition function $\eta\in\Pp(\AA\mid\UU)$.
Re-writing the theorem in terms of the controlled generator $A$ recovers
the result stated above. The utility of constructing an uncontrolled
generator in this way is in the fact that after integrating over a
control, the resulting generator needs to satisfy notably weaker conditions
than the original, controlled generator, in particular, the generator
$\hat{A}$ can have its range extend to discontinuous measurable functions.
\end{rem}

An essential step in applying Theorem~\ref{thm:ks} is showing that
Eq.~(\ref{eq:psifin}) holds. A similar condition was already shown to
be true for c\`adl\`ag relaxed controlled solution in
Proposition~\ref{prop:regular-means-regular}, and the following
proposition can be viewed as an analogue of that result for solutions
of the forward equation.
\begin{prop}
\label{prop:regular-forward-equation}Suppose $(A,K,\nu_{0})$ is
a regular relaxed controlled martingale problem, and $\mu\in\FfF(A,K,\nu_{0})$.
Then 
\begin{gather}
\int_{\UU\times\AA}\left(1+\psi_{\UU}(u)+\psi_{\AA}(a)\right)\mu_{t}(\D u\times\D a)\leq L_{\psi}\E^{\Lambda_{\psi}t}\qquad\forall t\in\TT,\label{eq:psi-gen-bound-1}
\end{gather}
where
$\Lambda_{\psi}\deq\Lambda_{1}+\text{\ensuremath{\Lambda}}_{\AA}$ and
$L_{\psi}\in\RR_{>0}$ is independent of $\mu$. Eq.~(\ref{eq:psifin})
holds, and $\mu^{\UU}\in C(\TT,\Pp(\UU))$. Moreover, for all
$\epsilon>0$ and $t\in\TT$, there exists $\delta_{\epsilon,t}>0$ such
that
\begin{gather}
\biggl|\int_{\UU}f(u)\mu_{t}^{\UU}(\D u)-\int_{\UU}f(u)\mu_{s}^{\UU}(\D u)\biggr|<a_{f}\epsilon\quad\forall|t-s|<\delta_{\epsilon,t},\,f\in\dD(A),\label{eq:psi-unicon}
\end{gather}
where $a_{f}$ is as in Assumption~\ref{cond:hasmarkov}(\emph{iv}) and
$\delta_{\epsilon,t}$ does not depend on $f$ or $\mu$.
\end{prop}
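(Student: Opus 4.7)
\textbf{Proof proposal for Proposition~\ref{prop:regular-forward-equation}.}

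The plan is to first establish the exponential growth bound (\ref{eq:psi-gen-bound-1}) via a Gronwall argument applied to the forward equation evaluated at the test functions $\phi_n$ from Definition~\ref{def:regular}, then derive the remaining statements as consequences of this bound and the point-wise control $|Af| \leq a_f \psi$ from Assumption~\ref{cond:hasmarkov}(iv).

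\emph{Step 1 (exponential bound).} For each fixed $n$, the function $\phi_n \in \dD(A) \subset C_b(\UU)$ is bounded, so $F_n(t)\deq\int\phi_n\,\D\mu_t^\UU$ is finite. Evaluating the forward equation (\ref{eq:KFE}) at $\phi_n$ and applying the bound $|A\phi_n(u,a,t)|\leq\Lambda_1(1+\psi_\UU(u)+\psi_\AA(a))$ from Definition~\ref{def:regular}(i), together with the initial-distribution bound (\ref{eq:regular-iv-bound}) and the admissibility bound (\ref{eq:regular-control-bound}), I get
\begin{gather*}
F_n(t)\leq L_\UU+\Lambda_1 t+\Lambda_1\int_0^t F(s)\,\D s+\frac{\Lambda_1 L_\AA}{\Lambda_\AA}\bigl(\E^{\Lambda_\AA t}-1\bigr),
\end{gather*}
where $F(s)\deq\int\psi_\UU\,\D\mu_s^\UU\in[0,\infty]$ (with the obvious modification if $\Lambda_\AA=0$). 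Since $(\phi_n)$ is increasing to $\psi_\UU$ by Definition~\ref{def:regular}(ii), monotone convergence gives $F_n(t)\uparrow F(t)$, so taking the supremum in $n$ yields the same integral inequality for $F$ itself. A standard Gronwall argument (on compact time intervals, where the inhomogeneous term is bounded) then produces a bound of the form $F(t)\leq C\E^{(\Lambda_1+\Lambda_\AA)t}$. Combining this with (\ref{eq:regular-control-bound}) and absorbing constants gives (\ref{eq:psi-gen-bound-1}) with $\Lambda_\psi=\Lambda_1+\Lambda_\AA$.

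\emph{Step 2 ((\ref{eq:psifin}) holds).} By Definition~\ref{def:regular}(i), $\psi^{\beta_1}\leq L_1(1+\psi_\UU+\psi_\AA)$ with $\beta_1>1$. Since $\mu_s$ is a probability measure, Jensen's inequality (equivalently, H\"older with exponent $\beta_1$) combined with Step~1 gives
\begin{gather*}
\int\psi\,\D\mu_s\leq\Bigl(\int\psi^{\beta_1}\,\D\mu_s\Bigr)^{1/\beta_1}\leq\bigl(L_1 L_\psi\bigr)^{1/\beta_1}\E^{\Lambda_\psi s/\beta_1},
\end{gather*}
which is locally integrable in $s$, establishing (\ref{eq:psifin}).

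\emph{Steps 3 and 4 (continuity and uniform equicontinuity).} For $f\in\dD(A)$, the forward equation and Assumption~\ref{cond:hasmarkov}(iv) yield
\begin{gather*}
\Bigl|\int f\,\D\mu_t^\UU-\int f\,\D\mu_s^\UU\Bigr|\leq a_f\int_s^t\int\psi(u,a,r)\,\mu_r(\D u\times\D a)\,\D r\leq a_f\,G(s,t),
\end{gather*}
where $G(s,t)\deq(L_1 L_\psi)^{1/\beta_1}\int_s^t\E^{\Lambda_\psi r/\beta_1}\,\D r$ depends only on $s,t$ (not on $f$ or $\mu$). This immediately gives Eq.~(\ref{eq:psi-unicon}) by choosing $\delta_{\epsilon,t}>0$ so that $G(t-\delta_{\epsilon,t},t+\delta_{\epsilon,t})<\epsilon$. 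For the continuity claim $\mu^\UU\in C(\TT,\Pp(\UU))$, the estimate above shows $t\mapsto\int f\,\D\mu_t^\UU$ is continuous for every $f\in\dD(A)$; since $\dD(A)$ strongly separates points by Assumption~\ref{cond:hasmarkov}(iii), it is convergence determining (\cite[Theorem 3.4.5(b)]{EK1986}), and hence continuity of the integrals against $\dD(A)$ upgrades to continuity of $t\mapsto\mu_t^\UU$ in the weak topology on $\Pp(\UU)$.

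The main obstacle is the circularity in Step~1: the right-hand side of the integral inequality involves $F(s)$ itself, which is a priori only known to be pointwise bounded in $n$ by a quantity possibly equal to $+\infty$. The monotone-convergence passage from $F_n$ to $F$ is what unlocks Gronwall, since it replaces the ill-defined upper bound involving $\psi_\UU$ by a self-consistent inequality for $F$, finite because each $F_n$ is finite and the bound is uniform in $n$. The remaining steps are then essentially routine consequences of Assumption~\ref{cond:hasmarkov}(iv) and the regularity estimates from Step~1.
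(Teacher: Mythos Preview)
Your proposal is correct and follows essentially the same route as the paper: evaluate the forward equation at $\phi_n$, pass to $\psi_\UU$ by monotone convergence, apply Gr\"onwall for the exponential bound, and then derive (\ref{eq:psifin}) and the equicontinuity estimate (\ref{eq:psi-unicon}) from $|Af|\le a_f\psi$ together with $\psi^{\beta_1}\le L_1(1+\psi_\UU+\psi_\AA)$ and the convergence-determining property of $\dD(A)$. The only cosmetic difference is in Steps~3--4, where the paper applies H\"older also to the time integral to obtain the explicit form (\ref{eq:f-bound}), while you integrate the pointwise Jensen bound $\E^{\Lambda_\psi r/\beta_1}$ directly; the conclusion is the same.
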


Equipped with the above results, we can move on to the proof of Theorem~\ref{thm:d-f-equivalence}.
\begin{proof}[Proof of Theorem~\ref{thm:d-f-equivalence}.]
 By Proposition~\ref{prop:augmentation-equivalence}, we may consider
c\`adl\`ag relaxed controlled solutions $\DdD(H,K,\upsilon)$ instead
of solutions $\DdD(G,K,\nu)$ together with their associated costs
processes.

(\emph{i}) Let then $\mu\in\FfF(H,K,\upsilon)$. It is straight-forward
to verify that $\mu^{\XX\times\AA}\deq(\mu_{t}^{\XX\times\AA})_{t\in\TT}$
is in $\FfF(G,K,\nu)$, and so by Proposition~\ref{prop:regular-forward-equation},
\begin{gather*}
\int_{0}^{t}\int_{\XX\times\AA}\psi(x,a,s)\,\mu_{s}^{\XX\times\AA}(\D x\times\D a)\,\D s<\infty\quad\forall t\in\TT,
\end{gather*}
and Theorem~\ref{thm:ks} yields a relaxed controlled solution $\pi\in\RrR(G,K,\nu)$.

By \cite[Remark 3.5]{Kurtz2001}, we can suppose the solution $\pi$
obtained from Theorem~\ref{thm:ks} has the form
$x_{t}^{\pi}=\Gamma^{-1}(z_{t}^{\pi})$, where $z^{\pi}$ is an adapted
c\`adl\`ag process, and $\hat{\XX}$ and $\Gamma:\XX\to\hat{\XX}$ are
defined
\begin{gather*}
\hat{\XX}\deq\left[-\Vert f_{1}\Vert,+\Vert f_{1}\Vert\right]\times\left[-\Vert f_{2}\Vert,+\Vert f_{2}\Vert\right]\times\cdots,\\
\Gamma(x)\deq(f_{1}(x),f_{2}(x),\ldots)\quad\forall x\in\XX,
\end{gather*}
with the functions $(f_{k})_{k\in\NN}$ being as in Assumption~\ref{cond:hasmarkov}(\emph{v}).
The space $\hat{\XX}$ is compact and the mapping $\Gamma$ is continuous
with a measurable inverse $\Gamma^{-1}:\Gamma(\XX)\to\XX$. Since
we require in Assumption~\ref{cond:hasmarkov}(\emph{iii}) that $\dD(G)$
strongly separates points, then by \cite[Lemma 1]{Blount2010}, $\Gamma^{-1}$
is continuous. It then follows that $x^{\pi}=(x_{t}^{\pi}=\Gamma^{-1}(z_{t}^{\pi}))$
is also c\`adl\`ag up to the first time $t$ such that $\lim_{s\uparrow t}z_{t}^{\pi}\notin\Gamma(\XX)$
for all $t\in\TT$. Clearly, $z^{\pi}$ exits the image of $\XX$
when $\lim_{s\uparrow t}\psi_{\XX}(x_{s}^{\pi})=\infty$. We can now
use the argument of Proposition~\ref{prop:regular-means-regular}
to estimate the first time at which the c\`adl\`ag process $x^{\pi}$
reaches infinity, and conclude that almost surely this never happens.
Therefore, the solution is c\`adl\`ag for all $t\in\TT$, almost surely.

For part (\emph{ii}), suppose $\pi\in\DdD(H,K,\upsilon)$ and let $\mu$
be as in Eq.~(\ref{eq:equiv-converse}). Letting $fg\in\dD(H)$ be
arbitrary, and taking the expectation of Eq.~(\ref{eq:martingale}),
one finds that $\mu\in\FfF(H,K,\upsilon)$.
\end{proof}
It remains for us to provide the short proof of Theorem~\ref{thm:equivalence}.
\begin{proof}[Proof of Theorem~\ref{thm:equivalence}.]
 The equality of $\pP_{\Ll}$ and $\pP_{\Pp}$-optimal values follows
now from Theorem~\ref{thm:d-f-equivalence} and the law invariance
of the risk function. In addition, if $\pi\in\DdD(G,K,\nu)$ is $\pP_{\Ll}$-optimal,
then there exists a $\pP_{\Pp}$-optimal $\mu\in\FfF(H,K,\upsilon)$
with the same optimal value. Again, by Theorem~\ref{thm:d-f-equivalence}
there exists a $\tilde{\pi}\in\DdD(G,K,\nu)$ constructed from $\mu$
such that the control process has the form given in the statement
of the theorem.
\end{proof}
For the proof of Theorem~\ref{thm:existence}, we introduce a family
of metrics, whose members each induce a topology at least as fine
that of weak convergence.
\begin{lem}
\label{lem:G-metric}Suppose $\UU$ is Polish, $\Gg\subset C_{b}(\UU)$
is an algebra that strongly separates points, and $\Vert\cdot\Vert_{\Gg}:\Gg\to\RR_{\geq0}$
is a seminorm. We define $d_{\Gg}:\Pp(\UU)\times\Pp(\UU)\to\RR_{\geq0}$
via
\begin{gather}
d_{\Gg}\left(\mu,\nu\right)\deq\sup\biggl\{\frac{\left|\int f(u)\mu(\D u)-\int f(u)\nu(\D u)\right|}{\Vert f\Vert+\Vert f\Vert_{\Gg}}\biggm|f\in\Gg,\,f\neq0\biggr\}\quad\forall\mu,\nu\in\Pp(\UU),\label{eq:G-metric-def}
\end{gather}
which is equivalent to the definition $d_{\Gg}(\mu,\nu)\deq\sup_{f\in\Gg_{1}}\bigl\{\bigl|\int f(u)\mu(\D u)-\int f(u)\nu(\D u)\bigr|\bigr\}$,
where $\Gg_{1}\deq\{f\in\Gg\mid\Vert f\Vert+\Vert f\Vert_{\Gg}\leq1\}$
for all $\mu,\nu\in\Pp(\UU)$.

(\emph{i}) The mapping $d_{\Gg}$ is a metric, and convergence in
the topology induced by $d_{\Gg}$ implies weak convergence. (\emph{ii})
If $\Gg^{\prime}$ is a subalgebra of $\Gg$, and there is a seminorm
$\Vert\cdot\Vert_{\Gg^{\prime}}:\Gg\to\RR_{\geq0}$ such that $\Vert\cdot\Vert_{\Gg}\leq\Vert\cdot\Vert_{\Gg^{\prime}}$,
then 
\begin{gather*}
d_{\Gg^{\prime}}\left(\mu,\nu\right)\leq d_{\Gg}\left(\mu,\nu\right)\quad\forall\mu,\nu\in\Pp(\UU).
\end{gather*}
\end{lem}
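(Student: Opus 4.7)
The plan is to verify all the claims essentially by direct bookkeeping, since the substantive work is already absorbed into the hypothesis that $\Gg$ strongly separates points. First I would confirm that the two definitions of $d_{\Gg}$ agree: for any $f\in\Gg$ with $\|f\|+\|f\|_{\Gg}>0$, the quantity $|\int f\,\D(\mu-\nu)|/(\|f\|+\|f\|_{\Gg})$ is invariant under the scaling $f\mapsto f/(\|f\|+\|f\|_{\Gg})$, and this rescaled function lies in $\Gg_{1}$ since $\Gg$ is in particular an $\RR$-vector space (being an algebra). Thus the supremum over nonzero elements of $\Gg$ coincides with the supremum over $\Gg_{1}$.

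Next I would verify the metric axioms. Symmetry, non-negativity, and $d_{\Gg}(\mu,\mu)=0$ are immediate from the definition. The triangle inequality follows by inserting an intermediate measure inside the numerator and using the triangle inequality for absolute values term by term before taking the supremum. The one step that uses the hypothesis on $\Gg$ is positive definiteness: since $\|\cdot\|$ is the supremum norm we have $\|f\|+\|f\|_{\Gg}>0$ for every nonzero $f\in\Gg$, so $d_{\Gg}(\mu,\nu)=0$ forces $\int f\,\D\mu=\int f\,\D\nu$ for all $f\in\Gg$. Applying the fact (recalled from \cite[Theorem 3.4.5(b)]{EK1986}, as cited just after Assumption~\ref{cond:hasmarkov}) that an algebra which strongly separates points is convergence determining to the constant sequence $\mu,\mu,\ldots$ gives $\mu\Rightarrow\nu$, hence $\mu=\nu$.

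For the implication that $d_{\Gg}$-convergence entails weak convergence, if $d_{\Gg}(\mu_{n},\mu)\to 0$ then for every $f\in\Gg$
\begin{gather*}
\biggl|\int f\,\D\mu_{n}-\int f\,\D\mu\biggr|\leq(\|f\|+\|f\|_{\Gg})\,d_{\Gg}(\mu_{n},\mu)\to 0,
\end{gather*}
so the same convergence-determining property of $\Gg$ yields $\mu_{n}\Rightarrow\mu$. Part (i) is then complete.

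Part (ii) is a one-line monotonicity argument: for any nonzero $f\in\Gg^{\prime}\subset\Gg$ the hypothesis $\|f\|_{\Gg}\leq\|f\|_{\Gg^{\prime}}$ gives $\|f\|+\|f\|_{\Gg}\leq\|f\|+\|f\|_{\Gg^{\prime}}$, so the ratio defining $d_{\Gg^{\prime}}(\mu,\nu)$ at $f$ is bounded above by the corresponding ratio with $\|f\|_{\Gg}$ in the denominator, which in turn is bounded by $d_{\Gg}(\mu,\nu)$ as the supremum over the larger class $\Gg$. Taking the supremum over $f\in\Gg^{\prime}$ finishes the inequality. There is no genuinely hard step here; the only potential pitfall is keeping track of when denominators are strictly positive (handled by the supremum norm) and invoking strong separation of points at the right moment to rule out $\mu\neq\nu$ with identical $\Gg$-integrals.
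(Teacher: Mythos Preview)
Your argument is correct and follows essentially the same route as the paper's proof: a scaling argument for the equivalence of the two definitions, direct verification of the metric axioms, and the ratio/inclusion monotonicity for part~(\emph{ii}). The only cosmetic difference is that for positive definiteness the paper invokes \cite[Theorem~3.4.5(a)]{EK1986} (separation of points suffices to conclude $\mu=\nu$ from $\int f\,\D\mu=\int f\,\D\nu$ for all $f\in\Gg$), whereas you route it through part~(b) via a constant sequence; both are valid, and your version has the minor economy of citing a single result for both the positive-definiteness step and the $d_{\Gg}$-convergence $\Rightarrow$ weak convergence step.
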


The $d_{\Gg}$-metrics defined above include the bounded Lipschitz
metric, $d_{bl}$ defined below, as a special case. As a consequence
of Lemma~\ref{lem:G-metric}, we obtain the following comparison
result.
\begin{cor}
\label{cor:dbl-comparison}Suppose $\UU$ is Polish, and define the
bounded Lipschitz metric $d_{bl}:\Pp(\UU)\times\Pp(\UU)\to\RR_{\geq0}$
as in \cite[Section 11.3]{Dudley2002},
\begin{gather*}
d_{bl}\left(\mu,\nu\right)\deq\sup\biggl\{\left|\int f(u)\mu(\D u)-\int f(u)\nu(\D u)\right|\biggm|f\in C_{bl}(\UU),\,\Vert f\Vert_{bl}\leq1\biggr\},
\end{gather*}
for all $\mu,\nu\in\Pp(\UU)$. Then, if $(\Gg,\Vert\cdot\Vert_{\Gg})$
is as in the statement Lemma~\ref{lem:G-metric}, and $\Gg\subset
C_{bl}(\UU)$ and $\Vert\cdot\Vert_{\Gg}\geq\Vert\cdot\Vert_{l}$ on
$\Gg$, then $d_{\Gg}(\mu,\nu)\leq d_{bl}(\mu,\nu)$ for all
$\mu,\nu\in\Pp(\UU)$ and the topology induced by $d_{\Gg}$ is
equivalent to the topology of weak convergence.
\end{cor}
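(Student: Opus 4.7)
The plan is to exploit the equivalent characterization of $d_\Gg$ from Lemma~\ref{lem:G-metric}, namely $d_\Gg(\mu,\nu)=\sup_{f\in\Gg_1}\bigl|\int f\,\D\mu-\int f\,\D\nu\bigr|$ with $\Gg_1=\{f\in\Gg\mid\Vert f\Vert+\Vert f\Vert_\Gg\leq 1\}$, together with the dual representation of $d_{bl}$. Once the inequality $d_\Gg\leq d_{bl}$ is in hand, the topological statement will follow by sandwiching $d_\Gg$ between weak convergence (from Lemma~\ref{lem:G-metric}(\emph{i})) and $d_{bl}$ (which metrizes weak convergence on any separable metric space by a standard result, e.g.\ \cite[Theorem~11.3.3]{Dudley2002}).

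For the inequality, I would argue as follows. Let $f\in\Gg_1$, so $f\in\Gg\subset C_{bl}(\UU)$ and $\Vert f\Vert+\Vert f\Vert_\Gg\leq 1$. By the hypothesis $\Vert\cdot\Vert_l\leq\Vert\cdot\Vert_\Gg$ on $\Gg$, it follows that
\begin{gather*}
\Vert f\Vert_{bl}=\Vert f\Vert+\Vert f\Vert_l\leq\Vert f\Vert+\Vert f\Vert_\Gg\leq 1,
\end{gather*}
so $f$ lies in the unit ball of $(C_{bl}(\UU),\Vert\cdot\Vert_{bl})$ used to define $d_{bl}$. Taking the supremum of $\bigl|\int f\,\D\mu-\int f\,\D\nu\bigr|$ over the smaller class $\Gg_1$ therefore yields
\begin{gather*}
d_\Gg(\mu,\nu)\leq d_{bl}(\mu,\nu)\qquad\forall\mu,\nu\in\Pp(\UU).
\end{gather*}

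For the topological equivalence, since $\UU$ is Polish (hence separable metric), $d_{bl}$ metrizes the topology of weak convergence on $\Pp(\UU)$. Thus if $\mu_n\Rightarrow\mu$ then $d_{bl}(\mu_n,\mu)\to 0$, and the inequality just established gives $d_\Gg(\mu_n,\mu)\to 0$. Conversely, Lemma~\ref{lem:G-metric}(\emph{i}) guarantees that $d_\Gg$-convergence implies weak convergence. The two topologies therefore coincide.

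There is no real obstacle here; the only point that requires care is to make sure that the chain $\Vert f\Vert_l\leq\Vert f\Vert_\Gg$, which holds only for $f\in\Gg$, is applied only on that domain, and that the Polish (separable) hypothesis on $\UU$ is invoked to cite the fact that $d_{bl}$ metrizes weak convergence. Both are immediate from the hypotheses.
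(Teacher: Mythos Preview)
Your proof is correct and follows essentially the same line as the paper's. The only cosmetic difference is that the paper first observes that $d_{bl}$ is itself an instance of the $d_{\Gg}$-metrics (with $\Gg=C_{bl}(\UU)$ and seminorm $\Vert\cdot\Vert_{l}$) and then invokes Lemma~\ref{lem:G-metric}(\emph{ii}) to obtain $d_{\Gg}\leq d_{bl}$, whereas you establish the inequality directly via the unit-ball inclusion $\Gg_{1}\subset\{f\in C_{bl}(\UU)\mid\Vert f\Vert_{bl}\leq1\}$; the topological sandwich argument is identical in both.
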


\begin{proof}
The set $C_{bl}(\UU)$ is an algebra that strongly separates points,
and the Lipschitz constant $\Vert\cdot\Vert_{l}$ is a seminorm on
$C_{bl}(\UU)$. Hence the definition of $d_{bl}$ is a special case
of the metrics defined in Lemma~\ref{lem:G-metric}. From there,
it follows that $d_{\Gg}(\mu,\nu)\leq d_{bl}(\mu,\nu)$ for all $\mu,\nu\in\Pp(\UU)$,
and therefore the topology induced by $d_{bl}$ is finer than that
of $d_{\Gg}$, which itself is finer than the topology of weak convergence.
However, by \cite[Theorem 11.3.3]{Dudley2002}, $d_{bl}$ and $d_{P}$
yield equivalent topologies, and so also $d_{\Gg}$ induces the same
topology.
\end{proof}
Note that tightness is a topological property: If $\UU$ is Polish,
then the tightness of a set $M\subset\Pp(\UU)$ implies convergence in
any metric that is equivalent to the Prokhorov metric $d_{P}$.  Since
sequential compactness implies compactness in metric spaces, tightness
of $M$ further implies it has compact closure, regardless of which
(equivalent) metric is used. Continuing this line of reasoning, we get
the following result.
\begin{cor}
\label{cor:dG-complete}Suppose the assumptions of Corollary~\ref{cor:dbl-comparison}
hold. Then the metric $d_{\Gg}$ is complete.
\end{cor}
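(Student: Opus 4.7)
The plan is to produce a weakly convergent subsequence of any $d_\Gg$-Cauchy sequence $(\mu_n)_{n\in\NN}\subset\Pp(\UU)$ whose limit sits in $\Pp(\UU)$; since $d_\Gg$ metrizes the topology of weak convergence by Corollary~\ref{cor:dbl-comparison}, the Cauchy hypothesis will then upgrade subsequence convergence to convergence of the full sequence by the standard triangle-inequality argument. So fix such a Cauchy sequence throughout.

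To obtain the subsequential weak limit I would build a compactification of $\UU$ from $\Gg$. Using separability of $\UU$ together with the strong separation property, extract a countable family $(f_k)_{k\in\NN}\subset\Gg$ with $\Vert f_k\Vert\leq 1$ that still strongly separates points (one may take the union over countably many pairs $(u_i,U_{i,m})$, where $(u_i)$ is dense in $\UU$ and $(U_{i,m})_m$ is a countable neighborhood basis at $u_i$, of the finite families supplied by the strong separation property). Set $\Gamma(u)\deq(f_k(u))_{k\in\NN}$; by \cite[Lemma~1]{Blount2010} (as already invoked earlier in the paper), $\Gamma:\UU\to[-1,1]^\NN$ is a homeomorphism onto its image, so $\hat\UU\deq\overline{\Gamma(\UU)}\subset[-1,1]^\NN$ is a compact metric space in which $\UU$ embeds. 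The push-forwards $\hat\mu_n\deq\mu_n\circ\Gamma^{-1}$ lie in the compact space $\Pp(\hat\UU)$, so some subsequence $\hat\mu_{n_k}$ converges weakly to a $\hat\mu\in\Pp(\hat\UU)$.

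The hard part, and the main obstacle, is to show that $\hat\mu$ is concentrated on $\Gamma(\UU)$, so that it pulls back to a genuine $\mu\in\Pp(\UU)$. My strategy would be contradiction: if $\hat\mu(\hat\UU\setminus\Gamma(\UU))>0$, then using the algebra closure of $\Gg$ and the strong separation property, one should be able to construct an $f\in\Gg$ with $\Vert f\Vert+\Vert f\Vert_\Gg$ bounded by a fixed constant whose values separate the escaping mass from some reference interior mass of $\hat\mu$ by an amount bounded away from zero; this would contradict the estimate $|\int f\,d\mu_n-\int f\,d\mu_m|\leq(\Vert f\Vert+\Vert f\Vert_\Gg)\,d_\Gg(\mu_n,\mu_m)\to 0$ inherited from the Cauchy property. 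Once $\hat\mu\in\Pp(\Gamma(\UU))$ is established, put $\mu\deq\hat\mu\circ\Gamma\in\Pp(\UU)$; weak convergence $\mu_{n_k}\Rightarrow\mu$ then follows because $\Gg$, as an algebra that strongly separates points, is convergence determining (cf.\ \cite[Theorem~3.4.5(b)]{EK1986}), and by the reduction in the first paragraph, $d_\Gg(\mu_n,\mu)\to 0$.
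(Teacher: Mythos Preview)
Your route is genuinely different from the paper's. The paper does not compactify at all: it simply follows Dudley's Theorem~11.5.4, arguing that a Cauchy sequence is totally bounded, that $d_\Gg$-total-boundedness forces tightness (this is the step borrowed from Dudley, where one uses Lipschitz bump functions around compacts), and then Prokhorov's theorem plus the topological equivalence of $d_\Gg$ with weak convergence (Corollary~\ref{cor:dbl-comparison}) finish the job.

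The gap in your proposal is step 4, and the contradiction you sketch does not close. Every function in the algebra generated by your chosen $f_k$ extends continuously to $\hat\UU$ (it is a polynomial in the coordinate projections), so along the extracted subsequence $\int f\,\D\mu_{n_k}\to\int\hat f\,\D\hat\mu$ \emph{regardless} of whether $\hat\mu$ charges $\hat\UU\setminus\Gamma(\UU)$. The $d_\Gg$-Cauchy property only says that $\int f\,\D\mu_n$ is Cauchy in $\RR$ uniformly over $f\in\Gg_1$, and that is perfectly compatible with mass piling up at a single boundary point of $\hat\UU$. To ``separate the escaping mass from interior mass'' with a function of bounded $\Vert\cdot\Vert+\Vert\cdot\Vert_\Gg$ you would need its continuous extension to jump between $\Gamma(\UU)$ and its closure, which no continuous function can do; and for $f\in\Gg$ that do \emph{not} extend continuously to $\hat\UU$, the Cauchy property still forces $\int f\,\D\mu_n$ to converge to \emph{some} real number, with nothing to contradict. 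What step 4 actually needs is tightness of the Cauchy sequence, and proving tightness from the $d_\Gg$-Cauchy hypothesis is precisely the content of the Dudley argument the paper invokes. Your compactification does not bypass that step---it only restates it as ``no mass escapes,'' without supplying the mechanism (Lipschitz cutoffs controlling mass outside compacts) that makes it go through.
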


\begin{proof}
We may simply follow the proof of \cite[Theorem 11.5.4]{Dudley2002};
the details are omitted here. As argued above, tightness of a
$M\subset\Pp(\UU)$ implies that $M$ has compact closure in any metric
equivalent to $d_{P}$. Hence, $M$ is totally bounded in any of the
metrics $d_{bl}$, $d_{\Gg}$, or $d_{P}$. Total boundedness of a
$M\subset\Pp(\UU)$ can subsequently be shown to imply tightness, and
hence convergence of a subsequence in any of the metrics. It then
suffices to note, as in \cite[Corollary 11.5.5]{Dudley2002} that
Cauchy sequences are totally bounded.
\end{proof}
\begin{rem}
In the light of the above discussion, it is clear that in Assumption~\ref{assu:existence}(\emph{i})
and Corollary~\ref{cor:dbl-comparison}, $(C_{bl}(\UU),\Vert\cdot\Vert_{l})$
can be replaced by any $(\Gg^{\ast},\Vert\cdot\Vert_{\Gg^{\ast}})$
satisfying assumptions of Lemma~\ref{lem:G-metric}, if $d_{bl}\leq d_{\Gg^{\ast}}$
and $d_{\Gg^{\ast}}$ is topologically equivalent to $d_{P}$. Note
also that at one extreme, we can choose $\Gg^{\ast\ast}=C_{b}(\UU)$
and $\Vert\cdot\Vert_{\Gg^{\ast\ast}}=0$. We then obtain a metric
$d_{\Gg^{\ast\ast}}$ such that $d_{\Gg}\leq d_{\Gg^{\ast\ast}}$
for all other metrics $d_{\Gg}$ obtained from Lemma~\ref{lem:G-metric}.
The metric $d_{\Gg^{\ast\ast}}$ coincides with the total variation
norm of signed measures, see {e.g.} \cite[Section 29]{Halmos1950},
and it is therefore not topologically equivalent to $d_{P}$.
\end{rem}

We shall also need the following basic statement regarding sets that
strongly separate points. We omit the proof, as it is a straight-forward
application of \cite[Lemma 4]{Blount2010}.
\begin{prop}
\label{prop:product-ssp}Let $\UU$ and $\VV$ be Polish spaces, and
suppose $\Gg\subset C_{b}(\UU)$ and $\Hh\subset C_{b}(\VV)$ strongly
separate points. Then $\Jj\deq\{fg\mid f\in\Gg,g\in\Hh\}\subset C_{b}(\UU\times\VV)$
strongly separates points.
\end{prop}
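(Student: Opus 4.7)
The plan is to unwind the definition of strong separation directly, using the fact that the product topology on $\UU\times\VV$ has a base of rectangles to decouple the two factors. Concretely, fix $(u_0,v_0)\in\UU\times\VV$ and an arbitrary open neighborhood $W$ of $(u_0,v_0)$. First I would choose open neighborhoods $U\ni u_0$ in $\UU$ and $V\ni v_0$ in $\VV$ with $U\times V\subset W$. Applying the strong-separation hypothesis to $\Gg$ at $(u_0,U)$ and to $\Hh$ at $(v_0,V)$ then produces finite subsets $\Gg_0\subset\Gg$, $\Hh_0\subset\Hh$ and constants $\delta_\UU,\delta_\VV>0$ such that $\inf_{u\notin U}\max_{f\in\Gg_0}|f(u)-f(u_0)|\geq\delta_\UU$ and $\inf_{v\notin V}\max_{g\in\Hh_0}|g(v)-g(v_0)|\geq\delta_\VV$.

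The assembly step is the crux and the main obstacle. The naive candidate $\{fg\mid f\in\Gg_0,\,g\in\Hh_0\}\subset\Jj$ does not always work: a product $fg$ can vanish at $(u_0,v_0)$ (for instance when $f(u_0)=0$) and simultaneously vanish at a candidate separating point of the form $(u,v_0)$ with $u\notin U$, so the products alone may fail to quantitatively detect displacement in a single factor. This is exactly the subtlety that \cite[Lemma 4]{Blount2010} is designed to bypass: it gives a characterization of strong separation that is stable under tensoring, and applying it to $\Gg$ and $\Hh$ separately and then taking products delivers the required finite $\Jj_0\subset\Jj$. Equivalently, in the settings of interest here where the domains contain constants (cf.\ Assumption~\ref{cond:hasmarkov}(\emph{i})), one can take the concrete choice $\Jj_0\deq\{f\cdot 1\mid f\in\Gg_0\}\cup\{1\cdot g\mid g\in\Hh_0\}\subset\Jj$, which sidesteps the vanishing issue by embedding each factor's separating family directly into $\Jj$.

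It then remains to verify strong separation for $\Jj_0$ at $(u_0,v_0)$. For any $(u,v)\notin W\supset U\times V$ we have $u\notin U$ or $v\notin V$; in the first case some $f\in\Gg_0$ yields $|f(u)-f(u_0)|\geq\delta_\UU$, hence $|(f\cdot 1)(u,v)-(f\cdot 1)(u_0,v_0)|\geq\delta_\UU$, and symmetrically in the second case. Therefore $\max_{h\in\Jj_0}|h(u,v)-h(u_0,v_0)|\geq\min(\delta_\UU,\delta_\VV)>0$ uniformly in $(u,v)\notin W$, which is precisely the strong-separation condition for $\Jj$ at $(u_0,v_0)$. Since $(u_0,v_0)$ and $W$ were arbitrary, $\Jj$ strongly separates points in $\UU\times\VV$.
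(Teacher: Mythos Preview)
Your approach coincides with the paper's: both defer to \cite[Lemma 4]{Blount2010} for the general statement, and the paper in fact omits the proof entirely on those grounds. Your additional exposition correctly identifies why the naive product family $\{fg : f\in\Gg_0,\ g\in\Hh_0\}$ can fail, which is a useful observation.

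One caveat: your concrete argument via $\Jj_0 = \{f\cdot 1 : f\in\Gg_0\}\cup\{1\cdot g : g\in\Hh_0\}$ requires $1\in\Gg$ and $1\in\Hh$, which the proposition as stated does not assume. This extra hypothesis does hold in the paper's sole application (inside the proof of Lemma~\ref{lem:fatso}, where the two factors are $\dD(G)$ and $\{g+g_0 : g\in C_c^{(1)}(\YY),\ g_0\in\RR\}$, both of which contain the constants), so your concrete argument suffices there. But for the proposition in the generality stated you are, like the paper, ultimately relying on the cited lemma rather than on the constants trick; the word ``Equivalently'' linking the two routes is therefore a little misleading.
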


As the first step towards proving Theorem~\ref{thm:existence}, we
give a compactness result for families of solutions to the forward
equation.
\begin{lem}
\label{lem:fatso}Suppose Assumption~\ref{assu:existence} holds,
and $\{\mu^{(n)}\}_{n\in\NN}\subset\FfF(H,K,\upsilon)$ is such that
there are $Y\geq0$ and $N\in\NN$ for which
\begin{gather*}
\sup_{n>N}\limsup_{t\to\infty}\int y^{p}\mu_{t}^{(n)\,\YY}\leq Y\quad(\TT=\RR_{\geq0})\quad\text{or}\\
\sup_{n>N}\int y^{p}\mu_{T}^{(n)\,\YY}\leq Y\quad(\TT=[0,T]).
\end{gather*}
Then there exists a sequence $(n_{k})_{k\in\NN}$ and a $\mu\in M(\TT,\Pp(\XX\times\YY\times\AA))$
such that $\mu^{(n_{k})}\warrow\mu$, $\mu^{\XX\times\YY}\in C(\TT,\Pp(\XX\times\YY))$,
and $\mu_{t}^{(n_{k})\,\XX\times\YY}\Rightarrow\mu_{t}^{\XX\times\YY}$
for all $t\in\TT$ and the limit $\mu$ satisfies the forward equation,
$\mu\in\FfF(H,K,\upsilon)$.
\end{lem}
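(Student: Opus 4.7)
I would follow the classical recipe: (\emph{i}) establish uniform-in-$n$ tightness of the time slices $\{\mu_t^{(n)}\}_n\subset\Pp(\XX\times\YY\times\AA)$ on each compact time interval; (\emph{ii}) establish equicontinuity of $t\mapsto\mu_t^{(n)\,\XX\times\YY}$ in a metric compatible with weak convergence; (\emph{iii}) apply Arzelà--Ascoli with diagonalisation to extract a $C(\TT,\Pp(\XX\times\YY))$-limit $\mu^{\XX\times\YY}$; and (\emph{iv}) extract a further subsequence in the coarser $\warrow$-topology on $M(\TT,\Pp(\XX\times\YY\times\AA))$ to obtain the full limit $\mu$, and verify $\mu\in\FfF(H,K,\upsilon)$.

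For tightness of the $\XX$- and $\AA$-marginals, Proposition~\ref{prop:regular-forward-equation} applied to $\mu^{(n)\,\XX\times\AA}\in\FfF(G,K,\nu)$ yields the uniform bound $\int(1+\psi_{\XX}+\psi_{\AA})\,\D\mu_t^{(n)\,\XX\times\AA}\leq L_{\psi}\E^{\Lambda_{\psi}t}$, and inf-compactness of $\psi_{\XX},\psi_{\AA}$ (Definition~\ref{def:regular}(\emph{iii})) then gives tightness on compacts by Prokhorov. For the $\YY$-marginal, observe that the running-cost process $y^{(n)}$ associated to $\mu^{(n)}$ by Theorem~\ref{thm:d-f-equivalence} is almost surely non-decreasing (since $c\geq0$ and $\alpha\geq0$), so $t\mapsto\int y^p\,\D\mu_t^{(n)\,\YY}$ is non-decreasing; combined with the hypothesis this gives the uniform bound $\int y^p\,\D\mu_t^{(n)\,\YY}\leq Y$ for all $t\in\TT$ and $n>N$, from which Markov's inequality yields tightness. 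Tightness of marginals on products of Polish spaces then gives tightness of the joint.

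Equicontinuity follows from the forward equation itself: for $f\in\dD(G)$ and $g\in C_{c}^{(1)}(\YY)$,
\begin{gather*}
\biggl|\int fg\,\D\mu_t^{(n)\,\XX\times\YY}-\int fg\,\D\mu_s^{(n)\,\XX\times\YY}\biggr|\leq\int_s^t\int\bigl(\Vert g\Vert\,a_f\psi+\E^{-\alpha r}\Vert g'\Vert\,c\bigr)\,\D\mu_r^{(n)\,\XX\times\AA}\,\D r,
\end{gather*}
and the right side is uniformly bounded on each compact in $t$ by the moment bounds above together with the growth condition on $c$ in Assumption~\ref{assu:main}(\emph{ii}). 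Taking $\Gg$ to be the algebra generated by $\{fg\mid f\in\dD(G),\,g\in C_c^{(1)}(\YY)\}$, which strongly separates points by Proposition~\ref{prop:product-ssp} and lies in $C_{bl}(\XX\times\YY)$ by Assumption~\ref{assu:existence}(\emph{i}), and equipping it with a seminorm dominating the effective Lipschitz bound above, Lemma~\ref{lem:G-metric} together with Corollary~\ref{cor:dbl-comparison} produces a metric $d_{\Gg}$, topologically equivalent to weak convergence, in which $t\mapsto\mu_t^{(n)\,\XX\times\YY}$ is uniformly equicontinuous on compacts. Arzelà--Ascoli on each $C([0,T'],(\Pp(\XX\times\YY),d_{\Gg}))$ and diagonalisation in $T'\uparrow\sup\TT$ produce the continuous limit $\mu^{\XX\times\YY}\in C(\TT,\Pp(\XX\times\YY))$. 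For the $\AA$-component, viewing $\mu^{(n)}$ as finite measures on $\XX\times\YY\times\AA\times[0,T']$ with Lebesgue measure on the time factor, the slicewise tightness established above makes this family tight in the usual sense, so that a further subsequence yields via Prokhorov the full limit $\mu\in M(\TT,\Pp(\XX\times\YY\times\AA))$, whose $\XX\times\YY$-marginal must coincide with the Arzelà--Ascoli limit by uniqueness of weak limits. Passing to the limit in Eq.~(\ref{eq:KFE}) for $fg\in\dD(H)$ then gives $\mu\in\FfF(H,\upsilon)$, and closedness of $K$ in the weak topology (Definition~\ref{def:regular}(\emph{vi})) gives $\mu\in K$.

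The main obstacle is the passage to the limit in Eq.~(\ref{eq:KFE}): the integrand $H(fg)(x,y,a,r)=g(y)Gf(x,a,r)+\E^{-\alpha r}c(x,a,r)g'(y)$ is generally unbounded in $(x,a)$, whereas $\warrow$-convergence only tests against bounded continuous functions with compact time support. I would handle this by truncation---multiplying $H(fg)$ by a smooth cutoff $\chi_M(x,a)$ supported on $\{\psi_\XX+\psi_\AA\leq M\}$---and controlling the truncation error uniformly in $n$ by the moment bounds from Proposition~\ref{prop:regular-forward-equation} and the cost growth in Assumption~\ref{assu:main}(\emph{ii}), then letting $M\to\infty$.
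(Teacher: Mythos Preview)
Your overall strategy matches the paper's proof closely: tightness via the moment bounds from Proposition~\ref{prop:regular-forward-equation} and the $\YY$-moment hypothesis, equicontinuity via a $d_{\Gg}$-type metric built from the span of $\dD(H)$ (the paper's $\Hh$ with seminorm $\Vert h\Vert_{\Hh}=\sup_{y}a_{h(\cdot,y)}+\Vert\partial_{y}h\Vert$), and Arzel\`a--Ascoli for the $\XX\times\YY$-marginals. Two minor points: the paper obtains monotonicity of $t\mapsto\int y^{p}\,\D\mu_{t}^{(n)\,\YY}$ directly from the forward equation with test functions increasing to $(\cdot)^{p}$, rather than by passing through Theorem~\ref{thm:d-f-equivalence} as you do; and the paper invokes Arzel\`a--Ascoli in the Kelley form (pointwise equicontinuity plus pointwise relative compactness), which handles noncompact $\TT$ without an explicit diagonalisation.

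The genuine divergence is in how you pass to the limit in the forward equation with the unbounded integrand $Hfg$. You propose truncation by a cutoff $\chi_{M}$ and a uniform tail estimate; this is correct and works with the moment bounds available. The paper instead absorbs the unboundedness into the reference measure: it defines auxiliary probability measures
\[
\kappa^{(n)}(\D x\times\D y\times\D a\times\D t)\propto\E^{-(\Lambda_{\psi}+1)t}\,\Psi(x,a)\,\mu_{t}^{(n)}(\D x\times\D y\times\D a)\,\D t,\qquad\Psi=(1+\psi_{\XX}+\psi_{\AA})^{1/\beta_{1}},
\]
proves $\{\kappa^{(n)}\}$ tight, and then observes that $Hfg/\Psi$ is \emph{bounded} and continuous (since $|Gf|\leq a_{f}\psi\leq a_{f}L_{1}^{1/\beta_{1}}\Psi$ and $c\leq L_{c}^{1/\beta_{c}}\Psi^{\beta_{1}/\beta_{c}}$ with $\beta_{c}\geq\beta_{1}$), so weak convergence of $\kappa^{(n)}$ yields the limit directly without a second $M\to\infty$ passage. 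Your truncation route is more elementary and avoids constructing and analysing the weighted measures $\kappa^{(n)}$ (including the separate tightness check of their $\YY$-marginal, which in the paper requires a Young-inequality estimate); the paper's change-of-measure trick is cleaner once set up, since it turns the unbounded-integrand problem into a single application of weak convergence rather than an approximation argument.
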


\begin{proof}
We first note that the sets $\{\mu_{t}^{(n)}\}_{n\in\NN}$ are tight
for each $t\in\TT$. The functions $\{\mu^{(n)\,\XX\times\AA}\}_{n\in\NN}$
are solutions to the forward equation corresponding to $G$, and the
assumptions of Proposition~\ref{prop:regular-forward-equation} hold.
By Eq.~(\ref{eq:psi-gen-bound-1}), inf-compactness of $\psi_{\XX}$
and $\psi_{\AA}$, and Proposition~\ref{prop:tightness}(\emph{iii}),
for every $t\in\TT$ the set of measures $\{\mu_{t}^{(n)\,\XX\times\AA}\}_{n\in\NN}$
is tight. From the forward equation for $H$, by considering non-negative
functions $g_{\ell}\in\dD(G)$, $\ell\in\NN$, that are constant on
$\XX$ and increasing towards $(\cdot)^{p}$, and using the monotone
convergence theorem, we find 
\begin{gather*}
\int_{\YY}y^{p}\mu_{t}^{(n)\,\YY}(\D y)=\int_{0}^{t}\E^{-\alpha s}\int_{\XX\times\YY\times\AA}c(x,a,s)py^{p-1}\mu_{t}^{(n)}(\D x\times\D y\times\D a)\,\D s\quad\forall t\in\TT,\,n>N.
\end{gather*}
Therefore, $t\to\int y^{p}\mu_{t}^{(n)\,\YY}(\D y)$ is increasing
and bounded by $Y$ for all $t\in\TT$ for at least all $n>N$. Proposition~\ref{prop:tightness}(\emph{ii})
asserts the tightness of $\{\mu_{t}^{(n)\,\YY}\}_{n\in\NN}$, so that
$\{\mu_{t}^{(n)}\}_{n\in\NN}$ is tight for all $t\in\TT$.

We next consider the continuity of the solutions $\{\mu^{(n)}\}_{n\in\NN}$.
Let $\Hh$ be the linear span of $\dD(H)$ and define
\begin{gather*}
\Vert h\Vert_{\Hh}\deq\sup_{y\in\YY}a_{h(\cdot,y)}+\Vert\partial_{y}h\Vert\quad\forall h\in\Hh,
\end{gather*}
where $\partial_{y}$ is the partial derivative along the $\YY$-space.
By using the assumption that $a_{\cdot}$ is a seminorm, it follows
that $\Vert\cdot\Vert_{\Hh}$ is also a seminorm. The set $\Hh$ is
closed under multiplications, is therefore an algebra, and by Proposition~\ref{prop:product-ssp}
strongly separates points. Moreover, $\Vert h\Vert_{\Hh}\geq\Vert h\Vert_{l}$
for all $h\in\Hh$, which can be shown using elementary estimates:
For any $h\in\Hh$,
\begin{align*}
\Vert h\Vert_{\Hh} & \geq\sup_{y\in\YY}\Vert h(\cdot,y)\Vert_{l}+\sup_{(x,y)\in\XX\times\YY}\left|\partial_{y}h(x,y)\right|\\
 & \geq\sup_{y\in\YY}\Vert h(\cdot,y)\Vert_{l}+\sup_{x\in\XX}\Vert h(x,\cdot)\Vert_{l}\\
 & =\sup_{y\in\YY}\sup_{x^{\prime}\neq x}\frac{\left|h(x^{\prime},y)-h(x,y)\right|}{d\left(x^{\prime},x\right)}+\sup_{x\in\XX}\sup_{y^{\prime}\neq y}\frac{\left|h(x,y^{\prime})-h(x,y)\right|}{\left|y^{\prime}-y\right|},\\
\Vert h\Vert_{l} & =\sup_{(x,y)\neq(x^{\prime},y^{\prime})}\frac{\left|h(x^{\prime},y^{\prime})-h(x,y)\right|}{d\left(x^{\prime},x\right)\vee\left|y^{\prime}-y\right|}\\
 & \leq\sup_{(x,y)\neq(x^{\prime},y^{\prime})}\frac{\left|h(x^{\prime},y)-h(x,y)\right|}{d\left(x^{\prime},x\right)\vee\left|y^{\prime}-y\right|}+\sup_{(x,y)\neq(x^{\prime},y^{\prime})}\frac{\left|h(x^{\prime},y^{\prime})-h(x^{\prime},y)\right|}{d\left(x^{\prime},x\right)\vee\left|y^{\prime}-y\right|}\\
 & \leq\sup_{x\neq x^{\prime}}\sup_{y^{\prime\prime}\in\YY}\frac{\left|h(x^{\prime},y^{\prime\prime})-h(x,y^{\prime\prime})\right|}{d\left(x^{\prime},x\right)}+\sup_{y\neq y^{\prime}}\sup_{x^{\prime\prime}\in\XX}\frac{\left|h(x^{\prime\prime},y^{\prime})-h(x^{\prime\prime},y)\right|}{\left|y^{\prime}-y\right|}.
\end{align*}
From this, we get $\Vert h\Vert_{\Hh}\geq\Vert h\Vert_{l}$ for all
$h\in\Hh$. Selecting $\Gg=\Hh$ in Lemma~\ref{lem:G-metric}, we
obtain a metric $d_{\Hh}$ on $\Pp(\XX\times\YY)$, and by Corollary~\ref{cor:dbl-comparison},
$d_{\Hh}$ induces the topology of weak convergence.

We now use the metric $d_{\Hh}$ to estimate the distances between
$\mu_{t}^{(n)\,\XX\times\YY}$ and $\mu_{s}^{(n)\,\XX\times\YY}$,
$n\in\NN$ and $t,s\in\TT$. Proposition~\ref{prop:regular-forward-equation}
is not directly applicable to the augmented problem $(H,K,\upsilon)$,
as this has not been established to be regular, but an analogue of
Eq.~(\ref{eq:psi-unicon}) nonetheless holds. A straight-forward
calculation yields that
\begin{align*}
\biggl|\int h(x,y)&\mu_{t}^{(n)\,\XX\times\YY}(\D x\times\D y) - \int h(x,y)\mu_{s}^{(n)\,\XX\times\YY}(\D x\times\D y)\biggr|\\
 & \leq\sup_{y\in\YY}a_{h(\cdot,y)}\int_{s}^{t}\int\psi(x,a,r)\mu_{r}(\D x\times\D y\times\D a)\,\D r\\
 & \qquad+\Vert\partial_{y}h\Vert\int_{s}^{t}\E^{-\alpha r}\int c(x,a,r)\mu_{r}(\D x\times\D y\times\D a)\,\D r\\
 & \leq\sup_{y\in\YY}a_{h(\cdot,y)}L_{1}^{1/\beta_{1}}\int_{s}^{t}\int\left(1+\psi_{\UU}(u)+\psi_{\AA}(a)\right)^{1/\beta_{1}}\mu_{r}(\D x\times\D y\times\D a)\,\D r\\
 & \qquad+\Vert\partial_{y}h\Vert L_{c}^{1/\beta_{c}}\int_{s}^{t}\E^{-\alpha r}\int\left(1+\psi_{\UU}(u)+\psi_{\AA}(a)\right)^{1/\beta_{c}}\mu_{r}(\D x\times\D y\times\D a)\,\D r\\
 & \leq\Vert h\Vert_{\Hh}\left(L_{1}^{1/\beta_{1}}\vee L_{c}^{1/\beta_{c}}\right)\int_{s}^{t}\int\left(1+\psi_{\UU}(u)+\psi_{\AA}(a)\right)\mu_{r}(\D x\times\D y\times\D a)\,\D r,
\end{align*}
for all $n\in\NN$, $t,s\in\TT$ and $h\in\Hh$. Estimating as in
Eq.~(\ref{eq:f-bound}), we find that for all $\epsilon>0$ and $t\in\TT$,
there exists a $\delta_{\epsilon,t}>0$ such that
\begin{gather}
\sup_{n\in\NN}\biggl|\int h(x,y)\mu_{t}^{(n)\,\XX\times\YY}(\D x\times\D y)-\int h(x,y)\mu_{s}^{(n)\,\XX\times\YY}(\D x\times\D y)\biggr|\leq\Vert h\Vert_{\Hh}\epsilon,\label{eq:eqc-estim-1}
\end{gather}
for all $|t-s|<\delta_{\epsilon,t}$ and $h\in\Hh$. From the definition
of $d_{\Hh}$ and Eq.~(\ref{eq:eqc-estim-1}), we get
\begin{gather*}
d_{\Hh}\left(\mu_{t}^{(n)\,\XX\times\YY},\mu_{s}^{(n)\,\XX\times\YY}\right)\leq\epsilon\quad\forall|t-s|<\delta_{\epsilon,t}.
\end{gather*}
Therefore, the set $\{\mu^{(n)\,\XX\times\YY}\}_{n\in\NN}$ is pointwise
equicontinuous when considered as a family of mappings from $(\TT,|\cdot|)$
to $(\Pp(\XX\times\YY),d_{\Hh})$. Above, we already showed that $\{\mu_{t}^{(n)\,\XX\times\YY}\}_{n\in\NN}$
are tight for each $t\in\TT$, and hence have compact closure. By
the Arzel\`a-Ascoli theorem \cite[Theorem 4.17]{Kelley1975}, $\{\mu^{(n)\,\XX\times\YY}\}_{n\in\NN}$
has compact closure, and consequently there is a subsequence $(n_{k})_{k\in\NN}$
such that $(\mu^{(n_{k})\,\XX\times\YY})_{k\in\NN}\in C(\TT,(\Pp(\XX\times\YY),d_{\Hh}))^\NN$
converges to a limit $\mu^{\XX\times\YY}\in C(\TT,(\Pp(\XX\times\YY),d_{\Hh}))$.
Since $d_{\Hh}$ and $d_{P}$ are topologically equivalent, the limit
$\mu^{\XX\times\YY}\in C(\TT,(\Pp(\XX\times\YY),d_{P}))$ and $\mu_{t}^{(n_{k})\,\XX\times\YY}\Rightarrow\mu_{t}^{\XX\times\YY}$
for all $t\in\TT$. For simplicity, we suppose the whole sequence
converges.

We remark that albeit $d_{P}$ and $d_{\Hh}$ are topologically equivalent,
there appears to be no easy way of replacing the latter by the former
in the above argument. This is because equicontinuity depends on the
properties of the metric rather than that of the topology generated
by it. The same applies for uniform convergence.

Next, we want to show that $\mu^{(n)}\warrow\mu$. Let $L_{\psi}$,
$\Lambda_{\psi}$ be as in the statement of Proposition~\ref{prop:regular-forward-equation},
and define $\kappa^{(n)}\in\Pp(\XX\times\YY\times\AA\times\TT)$ via
\begin{align*}
\kappa^{(n)}(\D x\times\D y\times\D a\times\D t) & \deq(\Lambda_{\psi}+1)\E^{-(\Lambda_{\psi}+1)t}\Nn_{t}^{(n)\,-1}\Psi(x,a)\mu_{t}^{(n)}(\D x\times\D y\times\D a)\,\D t,\\
\Psi(x,a) & \deq\left(1+\psi_{\XX}(x)+\psi_{\AA}(a)\right)^{1/\beta_{1}}\quad\forall(x,a)\in\XX\times\AA,\\
\Nn_{t}^{(n)} & \deq\int\Psi(x,a)\mu_{t}^{(n)}(\D x\times\D y\times\D a)\\
 & \leq L_{\psi}^{1/\beta_{1}}\E^{\frac{\Lambda_{\psi}}{\beta_{1}}t}\quad\forall n\in\NN,\,t\in\TT.
\end{align*}
Since $\Psi\geq1$, $\Nn_{t}^{(n)}\leq1$ for all $n\in\NN$ and $t\in\TT$.
We prove that $\{\kappa^{(n)}\}_{n\in\NN}$ is tight, which we do
by showing that its marginals are tight, and use Proposition~\ref{prop:tightness}(\emph{i}).
The tightness of the time-marginals is trivial, since they are all
the same. For the $\XX\times\AA$-marginal, we estimate
\begin{align*}
\int\left(1+\psi_{\XX}(x)+\psi_{\AA}(a)\right)^{1-1/\beta_{1}}&\kappa^{(n)\,\XX\times\AA}(\D x\times\D a) = \int(\Lambda_{\psi}+1)\E^{-(\Lambda_{\psi}+1)t}\Nn_{t}^{(n)\,-1}\\
 & \qquad\times\int\left(1+\psi_{\XX}(x)+\psi_{\AA}(a)\right)\mu_{t}^{(n)}(\D x\times\D y\times\D a)\,\D t\\
 & \leq\int(\Lambda_{\psi}+1)\E^{-(\Lambda_{\psi}+1)t}L_{\psi}\E^{\Lambda_{\psi}t}\,\D t\\
 & \leq L_{\psi}(\Lambda_{\psi}+1),
\end{align*}
and so Proposition~\ref{prop:tightness}(\emph{iii}) implies that
$\{\kappa^{(n)\,\XX\times\AA}\}_{n\in\NN}$ is tight. For the $\YY$-marginal,
we use Young's inequality to get 
\begin{align*}
\int y^{\frac{p(\beta_{1}-1)}{\beta_{1}}}&\kappa^{(n)\,\YY}(\D y) \leq\int\int(\Lambda_{\psi}+1)\E^{-(\Lambda_{\psi}+1)t}y^{\frac{p(\beta_{1}-1)}{\beta_{1}}}\Psi(x,a)\mu_{t}^{(n)}(\D x\times\D y\times\D a)\,\D t\\
 & \leq\frac{1}{\beta_{1}}\int(\Lambda_{\psi}+1)\E^{-(\Lambda_{\psi}+1)t}\int\left[(\beta_{1}-1)y^{p}+\Psi(x,a)^{\beta_{1}}\right]\mu_{t}^{(n)}(\D x\times\D y\times\D a)\,\D t\\
 & =\frac{\beta_{1}-1}{\beta_{1}}\int(\Lambda_{\psi}+1)\E^{-(\Lambda_{\psi}+1)t}\int y^{p}\mu_{t}^{(n)}(\D x\times\D y\times\D a)\,\D t\\
 & \qquad+\frac{1}{\beta_{1}}\int(\Lambda_{\psi}+1)\E^{-(\Lambda_{\psi}+1)t}\int\left(1+\psi_{\XX}(x)+\psi_{\AA}(a)\right)\mu_{t}^{(n)}(\D x\times\D y\times\D a)\,\D t\\
 & \leq\frac{\beta_{1}-1}{\beta_{1}}\int(\Lambda_{\psi}+1)\E^{-(\Lambda_{\psi}+1)t}Y\,\D t+\frac{1}{\beta_{1}}\int(\Lambda_{\psi}+1)\E^{-(\Lambda_{\psi}+1)t}L_{\psi}\E^{\Lambda_{\psi}t}\,\D t\\
 & \leq\frac{(\beta_{1}-1)Y+(\Lambda_{\psi}+1)L_{\psi}}{\beta_{1}},
\end{align*}
for all $n>N$, and thus $\{\kappa^{(n)\,\YY}\}_{n\in\NN}$ is tight.
We conclude that $\{\kappa^{(n)}\}_{n\in\NN}$ is tight, and hence
contains a convergent subsequence with a limit $\kappa\in\Pp(\XX\times\YY\times\AA\times\TT)$.
Clearly, the $\TT$-marginal of $\kappa$ must be $(\Lambda_{\psi}+1)\E^{-(\Lambda_{\psi}+1)t}\D t$,
since again, this is the marginal of all $\kappa^{(n)}$, $n\in\NN$.
Moreover, as multiplication of measures by bounded continuous functions,
in particular by $\Psi^{-1}\leq1$, preserves weak convergence, we
find that the limit has the form 
\begin{gather*}
\kappa(\D x\times\D y\times\D a\times\D t)=(\Lambda_{\psi}+1)\E^{-(\Lambda_{\psi}+1)t}\Nn^{-1}\Psi(x,a)\mu_{t}(\D x\times\D y\times\D a)\,\D t,
\end{gather*}
where $\Nn$ is a normalization coefficient. From this, the convergence
$\mu^{(n)}\warrow\mu$, follows. Note that for this convergence result,
showing that $\{(\Lambda_{\psi}+1)\E^{-(\Lambda_{\psi}+1)t}\mu_{t}^{(n)}(\D x\times\D y\times\D a)\,\D t\}_{n\in\NN}$
is tight would have sufficed, however, in the following we need the
original definition of $\kappa^{(n)}$ that additionally features
the $\Psi$ coefficient. We again assume the whole sequence converges.

We can now show that the limit $\mu$ satisfies the forward equation.
First note that since $\mu_{t}^{(n)\,\XX\times\YY}\Rightarrow\mu_{t}^{\XX\times\YY}$
for all $t\in\TT$, we have that
\begin{gather*}
\lim_{n\to\infty}\int f(x)g(y)\mu_{t}^{(n)\,\XX\times\YY}(\D x\times\D y)=\int f(x)g(y)\mu_{t}^{\XX\times\YY}(\D x\times\D y)
\end{gather*}
for all $t\in\TT$ and $fg\in\dD(H)$. To show that
\begin{align*}
  \int_{0}^{t}\int Hfg(x,y,a,t)&\mu_{s}^{(n)}(\D x\times\D y\times\D a)\,\D s
  \\ & \to \int_{0}^{t}\int Hfg(x,y,a,t)\mu_{s}(\D x\times\D y\times\D a)\,\D s
\end{align*}
for all $fg\in\dD(H)$ and $t\in\TT$, it suffices now to show that
\begin{align*}
  \int\int h(t)Hfg(x,y,a,t)&\mu_{s}^{(n)}(\D x\times\D y\times\D a)\,\D s
  \\ & \to\int\int h(t)Hfg(x,y,a,t)\mu_{s}(\D x\times\D y\times\D a)\,\D s
\end{align*}
for all $h\in C_{c}(\TT)$. We note that for all $fg\in\dD(H)$ and
$h\in C_{c}(\TT)$, we have
\begin{align*}
  \int\int h(t)g(y)&Gf(x,a,t)\mu_{t}^{(n)}(\D x\times\D y\times\D a)\,\D t
  \\ & = \int\frac{h(t)}{(\Lambda_{\psi}+1)\E^{-(\Lambda_{\psi}+1)t}}\frac{g(y)Gf(x,a,t)}{\Psi(x,a,t)}\kappa^{(n)}(\D x\times\D y\times\D a\times\D t).
\end{align*}
Recalling that $\psi\leq L_{1}^{1/\beta_{1}}\Psi$, and $|Gf|\leq a_{f}\psi$
for all $f\in\dD(G)$, the integrand on the right-hand side is bounded,
and continuous, and by the weak convergence of the sequence $(\kappa^{(n)})_{n\in\NN},$
we have that
\begin{align*}
  \lim_{n\to\infty}\int\int &h(t)g(y)Gf(x,a,t) \mu_{t}^{(n)}(\D x\times\D y\times\D a)\,\D t
  \\ & = \int\int h(t)g(y)Gf(x,a,t)\mu_{t}(\D x\times\D y\times\D a)\,\D t.
\end{align*}
Finally, again for all $fg\in\dD(H)$ and $h\in C_{c}(\TT)$,
\begin{multline*}
\int\int h(t)\E^{-\alpha t}f(x)g^{\prime}(y)c(x,a,t)\mu_{t}^{(n)}(\D x\times\D y\times\D a)\,\D t\\
=\int\frac{h(t)\E^{-\alpha t}}{(\Lambda_{\psi}+1)\E^{-(\Lambda_{\psi}+1)t}}\frac{f(x)g^{\prime}(y)c(x,a,t)}{\Psi(x,a,t)}\kappa^{(n)}(\D x\times\D y\times\D a\times\D t).
\end{multline*}
As above, the integrand is bounded since by Assumption~\ref{assu:main},
$c\leq L_{c}^{1/\beta_{c}}\Psi^{\beta_{1}/\beta_{c}}$ and $\beta_{c}\geq\beta_{1}$,
and weak convergence implies
\begin{align*}
  \lim_{n\to\infty}\int\int h(t)\E^{-\alpha t}&f(x)g^{\prime}(y)c(x,a,t)\mu_{t}^{(n)}(\D x\times\D y\times\D a)\,\D t
  \\&=\int\int h(t)\E^{-\alpha t}f(x)g^{\prime}(y)c(x,a,t)\mu_{t}(\D x\times\D y\times\D a)\,\D t.
\end{align*}
Combining the above, we have that $\mu\in\FfF(H,K,\upsilon)$. The
constraints are satisfied, as we assume $K$ to be closed in the weak
topology.
\end{proof}
We can now give the proof of Theorem~\ref{thm:existence}.
\begin{proof}[Proof of Theorem~\ref{thm:existence}]
We give the proof for the case where Assumption~\ref{assu:existence}(\emph{iii}a)
holds; in the (b) case where the cost rate $c$ and terminal cost
$v$ are bounded we may take the space of costs $\YY$ to be compact,
and skip tightness arguments that are otherwise necessary.

Let $\rho^{\ast}\in\RR\cup\{\infty\}$ be the optimal value of the
problem. If $\rho^{\ast}$ is infinite, then every solution to the
forward equation is optimal, and we are done. We suppose then that
$\rho^{\ast}<\infty$, and let $(\mu^{(n)})\in\FfF(H,K,\upsilon)^{\NN}$
be a minimizing sequence.

We first show that the $p$th moments of the running costs are bounded.
If $\TT=\RR_{\geq0}$, for a sufficiently large $N\in\NN$, we have
that
\begin{gather*}
\limsup_{t\to\infty}\rho(\mu_{t}^{(n)\,\YY})\leq\rho^{\ast}+1\quad\forall n>N,
\end{gather*}
and, since $\rho$ is coercive, there is a $Y\geq0$ such that
\begin{gather*}
\limsup_{t\to\infty}\int y^{p}\mu_{t}^{(n)\,\YY}\leq Y\quad\forall n>N.
\end{gather*}
If instead $\TT=[0,T]$, then
\begin{gather*}
\int\left(y+v(x)\right)^{p}\mu_{T}^{(n)\,\XX\times\YY}\leq Y\quad\forall n>N.
\end{gather*}
The assumptions of Lemma~\ref{lem:fatso} now hold, and we obtain
a subsequence $(\mu^{(n_{k})})_{n_{k}\in\NN}\in M(\TT,\Pp(\XX\times\YY\times\AA))$
with a limit $\mu=\mu^{(\infty)}\in M(\TT,\Pp(\XX\times\YY\times\AA))$.
We shall, as usual, suppose the whole sequence converges.

As the next step, we prove that for each $\mu^{(n)}$, $n\in\NN\cup\{\infty\}$,
the cost marginal distributions become stationary when $\TT=\RR_{\geq0}$,
and that the infinite time limit is obtained uniformly, that is, at
rates independent of $n$. We use much the same methods as above when
proving that the $\XX\times\YY$-marginals are continuous. Here, we
need to only focus on the $\YY$-marginals. We define $\Cc\deq\{f+f_{0}\mid f\in C_{c}^{(1)}(\YY),\,f_{0}\in\RR\}$,
$\Vert f\Vert_{\Cc}\deq\Vert f^{\prime}\Vert$ for all $f\in\Cc$.
Assumptions of Lemma~\ref{lem:G-metric} and Corollaries~\ref{cor:dbl-comparison}
and~\ref{cor:dG-complete} hold, and we obtain a complete metric
$d_{\Cc}$ defined on $\Pp(\YY)$.

From the forward equation, for all $g\in C_{c}^{(1)}(\YY)$ and $n\in\NN\cup\{\infty\}$,
\begin{align*}
\biggl|\int g(y)&\mu_{t}^{(n)}(\D y)-\int g(y)\mu_{s}^{(n)}(\D y)\biggr| =\left|\int_{s}^{t}\E^{-\alpha r}\int g^{\prime}(y)c(x,a,r)\mu_{r}^{(n)}(\D x\times\D y\times\D a)\,\D r\right|\\
 & \leq\Vert g^{\prime}\Vert\left|\int_{s}^{t}\E^{-\alpha r}\int L_{c}^{1/\beta_{c}}\left(1+\psi_{\XX}(x)+\psi_{\AA}(a)\right)^{1/\beta_{c}}\mu_{r}^{(n)}(\D x\times\D y\times\D a)\,\D r\right|\\
 & \leq L_{c}^{1/\beta_{c}}\Vert g^{\prime}\Vert\int_{s}^{t}\E^{-\alpha r}\left(\int\left(1+\psi_{\XX}(x)+\psi_{\AA}(a)\right)\mu_{r}^{(n)}(\D x\times\D y\times\D a)\right)^{1/\beta_{c}}\,\D r\\
 & \leq L_{c}^{1/\beta_{c}}\Vert g^{\prime}\Vert\int_{s}^{t}\E^{-\alpha r}\left(L_{\psi}\E^{\Lambda_{\psi}r}\right)^{1/\beta_{c}}\,\D r\\
 & \leq L_{c}^{1/\beta_{c}}L_{\psi}^{1/\beta_{c}}\Vert g^{\prime}\Vert\int_{s}^{t}\E^{\left(\frac{\Lambda_{\psi}}{\beta_{c}}-\alpha\right)r}\,\D r\\
 & \leq L_{c}^{1/\beta_{c}}L_{\psi}^{1/\beta_{c}}\Vert g^{\prime}\Vert\left|\frac{\E^{\left(\frac{\Lambda_{\psi}}{\beta_{c}}-\alpha\right)t}-\E^{\left(\frac{\Lambda_{\psi}}{\beta_{c}}-\alpha\right)s}}{\frac{\Lambda_{\psi}}{\beta_{c}}-\alpha}\right|\\
 & \eqd\Vert g^{\prime}\Vert\eta(s,t).
\end{align*}
Recalling the bound on $\alpha$ given in Assumption~\ref{assu:existence},
we now have that $\eta(s,t)=\eta(t,s)$ and $\eta(t,t)=0$ for all
$s,t\in\TT$, and $\eta(t,t+h)\to0$ for all $h\geq0$ as $t\to\infty$.
Thus, for any $\epsilon>0$, we can find $T_{\epsilon}\in\TT$ so
that $\eta(t,s)\leq\epsilon$ for all $t,s\geq T_{\epsilon}$. This
implies that
\begin{gather*}
d_{\Cc}\left(\mu_{t}^{(n)\,\YY},\mu_{s}^{(n)\,\YY}\right)\leq\epsilon\quad\forall t,s\geq T_{\epsilon},
\end{gather*}
and so by using the completeness of $d_{\Cc}$, there is a $\mu_{\infty}^{(n)\,\YY}\in\Pp(\YY)$
such that $\mu_{t}^{(n)\,\YY}\Rightarrow\mu_{\infty}^{(n)\,\YY}\in\Pp(\YY)$
as $t\to\infty$. This is to say, the cost distributions converge
to stationary distributions $\mu_{\infty}^{(n)\,\YY}$, $n\in\NN\cup\{\infty\}$
as time tends to infinity. In addition, a similar argument as in the
proof of Lemma~\ref{lem:fatso} shows that $\mu^{(n)\,\YY}\ccarrow\mu^{(\infty)\,\YY}$
(with the metric $d_{\Cc}$ assigned to probability measures on $\Pp(\YY)$).

Let $\epsilon>0$ be arbitrary. For every $t\in\TT$ there now is
an $N_{\epsilon,t}\in\NN$ such that $d_{\Cc}(\mu_{t}^{(n)\,\YY},\mu_{t}^{(\infty)\,\YY})\leq\epsilon$
for all $n\geq N_{\epsilon,t}$. Therefore, for all $t\ge T_{\epsilon/3}$
and $n\geq N_{\epsilon/3,t}$, we have
\begin{align*}
d_{\Cc}\left(\mu_{\infty}^{(n)\,\YY},\mu_{\infty}^{(\infty)\,\YY}\right) & \leq d_{\Cc}\left(\mu_{\infty}^{(n)\,\YY},\mu_{t}^{(n)\,\YY}\right)+d_{\Cc}\left(\mu_{t}^{(n)\,\YY},\mu_{t}^{(\infty)\,\YY}\right)+d_{\Cc}\left(\mu_{t}^{(\infty)\,\YY},\mu_{\infty}^{(\infty)\,\YY}\right)\\
 & \leq \epsilon,
\end{align*}
which implies
$\mu_{\infty}^{(n)\,\YY}\Rightarrow\mu_{\infty}^{(\infty)\,\YY}$.
Since $t\to\int y^{p}\mu_{t}^{(n)\,\YY}(\D y)$ is bounded by $Y$ and
increasing for every $n\in\NN\cup\{\infty\}$, each of these function
converges for every $n\in\NN\cup\{\infty\}$ to a limit as
$t\to\infty$. Then, using Skorokhod's representation theorem
\cite[Theorem 3.1.8]{EK1986}, the uniform boundedness of the $p$th
moments, and dominated convergence theorem, we have that $\int
y^{p}\mu_{\infty}^{(n)\,\YY}(\D y)\to\int
y^{p}\mu_{\infty}^{(\infty)\,\YY}(\D y)$.  Convergence of the moments
together with weak convergence implies convergence in the
$p$-Wasserstein metric $W^{p}$ \cite[Theorem 6.9]{Villani2009}.

If $\TT=[0,T]$, we can directly use the convergence $\mu_{T}^{(n)\,\XX\times\YY}\Rightarrow\mu_{T}^{(\infty)\,\XX\times\YY}$
and the continuity of $\Theta$ to conclude using the continuous mapping
theorem \cite[Corollary 3.1.9]{EK1986} that $\mu_{T}^{(n)\,\XX\times\YY}\circ\Theta^{-1}\Rightarrow\mu_{T}^{(\infty)\,\XX\times\YY}\circ\Theta^{-1}$.
Since the $p$th moments of $(x,y)\to\Theta(x,y)=y+v(x)$ with respect
to $\mu_{T}^{(n)\,\XX\times\YY}$ are uniformly bounded over $n>N$,
the same argument as above shows that the sequence converges also
in the $p$-Wasserstein metric.

We can now complete the proof. For $\TT=\RR_{\geq0}$, by continuity
of $\tilde{\rho}:(\Pp^{p}(\RR),W^{p})\to(\RR,|\cdot|)$ and the asymptotic
time convergence of the cost distributions, for all $n\in\NN$,
\begin{gather*}
\limsup_{t\to\infty}\tilde{\rho}\left(\mu_{t}^{(n)\,\YY}\right)=\tilde{\rho}\left(\mu_{\infty}^{(n)\,\YY}\right)
\end{gather*}
and taking the limit $n\to\infty$,
\[
\rho^{\ast}=\lim_{n\to\infty}\limsup_{t\to\infty}\tilde{\rho}\left(\mu_{t}^{(n)\,\YY}\right)=\tilde{\rho}\left(\mu_{\infty}^{(\infty)\,\YY}\right),
\]
and similarly for the case of $\TT=[0,T]$, assuming lower semicontinuity
of $\tilde{\rho}$,
\begin{gather*}
\rho^{\ast}\geq\liminf_{n\to\infty}\tilde{\rho}\left(\mu_{T}^{(n)\,\XX\times\YY}\circ\Theta^{-1}\right)\geq\tilde{\rho}\left(\mu_{T}^{(\infty)\,\XX\times\YY}\circ\Theta^{-1}\right).
\end{gather*}
Therefore the limits are optimal, and the proof is complete. 
\end{proof}

\section{\label{sec:numer}Numerical example}

Here, as a proof of concept, we present and solve a particularly simple
risk-aware optimization problem. The state and action spaces are compact
and the processes one-dimensional, but we emphasize that our general
setup allows for non-compact state and action spaces, and infinite
dimensional state and action spaces.

We consider a follower problem on a circle: The setup consists of
a (uncontrolled) stochastic process $(b_{t})_{t\in T}$ (the target)
being pursued by a controlled process $(v_{t})_{t\in T}$ (the pursuer)
whose objective is to minimize the distance between itself and the
target by choosing the direction and speed at which the pursuer moves.
We assume the pursuer's cost rate is the sum of the distance between
it and the target (as we define later), and the velocity squared.
\global\long\def\blo{\underline{a}}%
\global\long\def\bhi{\bar{a}}%
\global\long\def\ybar{\bar{y}}%

To formalize the problem, let the state space $\XX=\RR/2\pi\NN$ (we
interpret $\XX$ as being formed from copies of the interval $[0,2\pi)$),
equipped with the metric $d(\theta,\phi)=[1-\cos(\theta-\phi)]^{1/2}$
for all $\theta,\phi\in\XX$, and the action space $\AA=[\blo,\bhi]$
where $-\infty<\blo<\bhi<\infty$. The pursuer's action is interpreted
as its velocity along the circle: The pursuer's position is represented
by a process $(\phi_{t})_{t\in T}$, $\phi_{t}\in\XX$ for all $t\in\TT$,
such that $\D\phi_{t}=a_{t}\D t$ where $a_{t}\in\AA$ is the action
at time $t\in\TT$. We take the target's position to be the process
$(\sigma w_{t})_{t\in\TT}$ where $(w_{t})_{t\in\TT}$ is a Wiener
process on $\RR$, mapped onto $\XX$, and $\sigma>0$ is a constant
representing the magnitude of randomness in its motion. Let the difference
between the pursuer's and target's positions $x_{t}=\phi_{t}-w_{t}$
be the state process $(x_{t})_{t\in T}$, $x_{t}\in\XX$ for all $t\in\TT$.
This follows the stochastic differential equation
\begin{gather*}
\D x_{t}=a_{t}\D t-\D w_{t},
\end{gather*}
with an initial value $x_{0}=0$. This process has the generator $G:C^{(2)}(\XX)\to C(\XX\times\AA)$,
such that for all $f\in\dD(G)$,
\begin{gather}
Gf(x,a)=a\frac{\partial f}{\partial x}(x)+\frac{1}{2}\sigma^{2}\frac{\partial^{2}f}{\partial x^{2}}(x).\label{eq:X_G}
\end{gather}
We choose the cost rate function 
\begin{gather}
c(x,a)=d(x,0)^{2}+\gamma a^{2}=1-\cos x+\gamma a^{2},\label{eq:X_c}
\end{gather}
for all $x,a\in\XX\times\AA$ and where $\gamma\geq0$ is a parameter
representing the magnitude of the pursuer's cost of moving. We consider
discounted costs with a discount rate $\alpha>0$. The space $\YY$
of values of accumulated costs is $\YY=[0,\ybar]$, where $\ybar\deq2+\gamma\bhi^{2}$.

As our risk function, we use the entropic risk measure: Let
$\theta\in\RR$, and define $\rho_{\theta}\in\Pp(\YY)\to\RR$ as
\begin{gather}
\rho_{\theta}(\lambda)\deq\begin{cases}
\frac{1}{\theta}\ln\left[\int_{\YY}\E^{\theta y}\lambda(\D y)\right], & \theta\neq0,\\
\int_{\YY}y\lambda(\D y), & \theta=0,
\end{cases}\label{eq:X_rho_prime}
\end{gather}
for all $\lambda\in\Pp(\YY)$. The parameter $\theta$ represents risk
preferences: Positive values translate to risk-averse objectives, and
the larger $\theta$ is, the more the risks are weighted in assessing
risk. Conversely, negative values of $\theta$ imply risk seeking
preferences. Note that for $\theta$ close to zero,
$\rho_{\theta}(\lambda)=\int_{\YY}[y+y^{2}/(2\theta)]\lambda(\D
y)+\mathcal{O}(\theta^{2})$, so that $\rho_{\theta}$ approximates
linear-quadratic costs as a special case. The risk function
$\rho_{\theta}$ is not convex, but since the logarithm is strictly
increasing, we can just as well consider the equivalent risk function
\begin{gather}
\hat{\rho}_{\theta}(\lambda)\deq\int_{\YY}\E^{\theta y}\lambda(\D y),\label{eq:X_rho}
\end{gather}
for all $\lambda\in\Pp(\YY),$ which is linear. We also restrict
$\theta$ to non-negative values.

Given the compactness of $\XX$ and $\AA$, Assumptions~\ref{assu:main}
and \ref{assu:existence} are readily verified.

For the numerical solution of the problem, we discretize the forward
equation using standard methods, cf. \cite[Chapter 2.2]{ctmdpbook},
so that the discretized equation corresponds to a forward equation
on a discrete space. As the discretized system corresponds to a finite
state continuous-time controlled Markov chain, the weak convergence
results of \cite[Chapter 10]{Kushner2001} apply. The time axis is
truncated to a maximum time of $T^{\ast}\in\TT$, and the $\XX$,
$\YY$, $\AA$, and $\TT$ axis are discretized to $n$ equidistant
samples each; the discretized spaces are labeled with an underscore
$n$. The details of the construction are omitted, and proving the
convergence of this approach in the risk-aware case is beyond the
scope of this paper. The objective is linear in the cost distribution,
and we may use linear programming methods to solve the problem.

\begin{figure}
\begin{centering}
\includegraphics[width=0.5\textwidth]{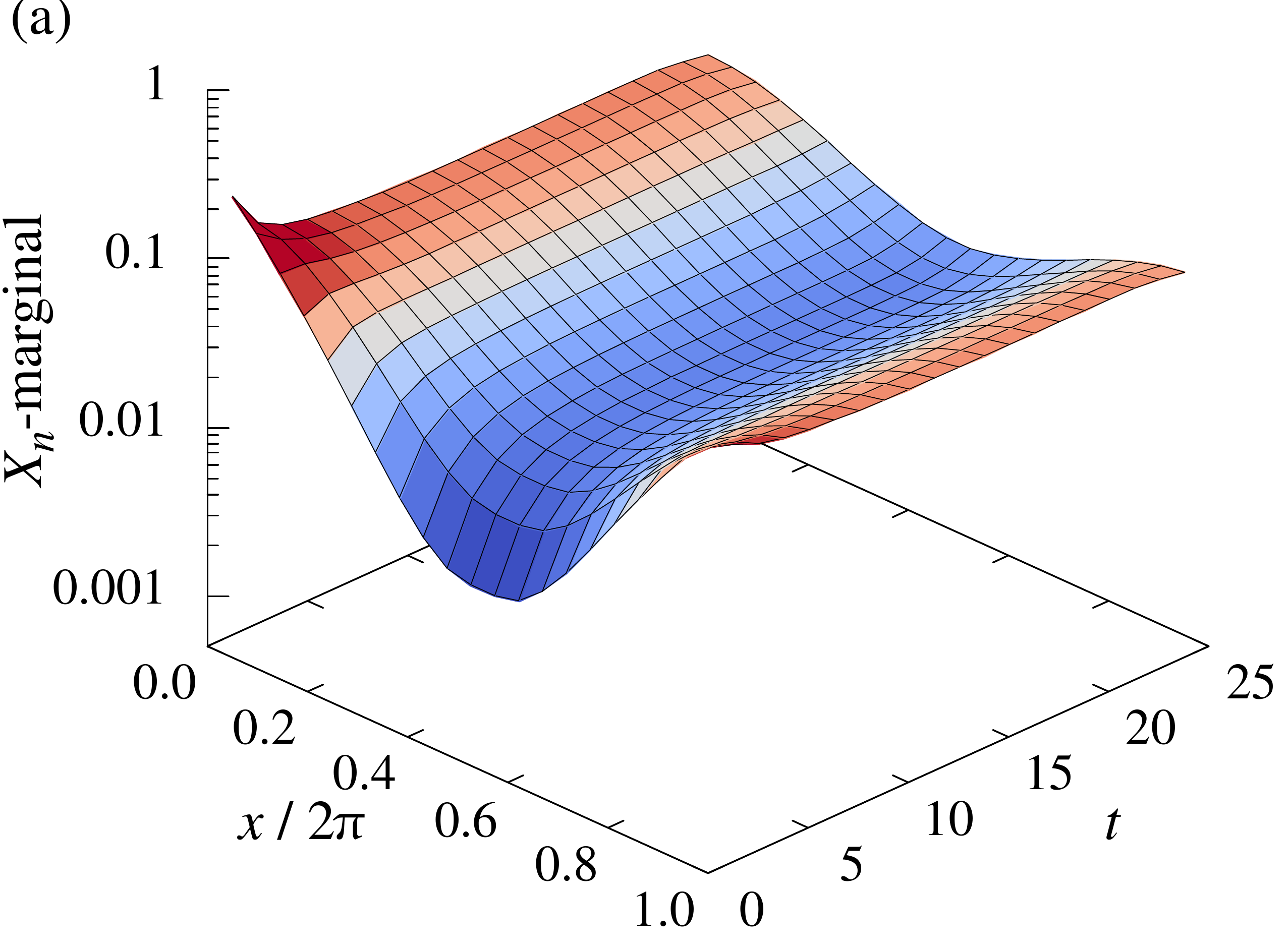}\includegraphics[width=0.5\textwidth]{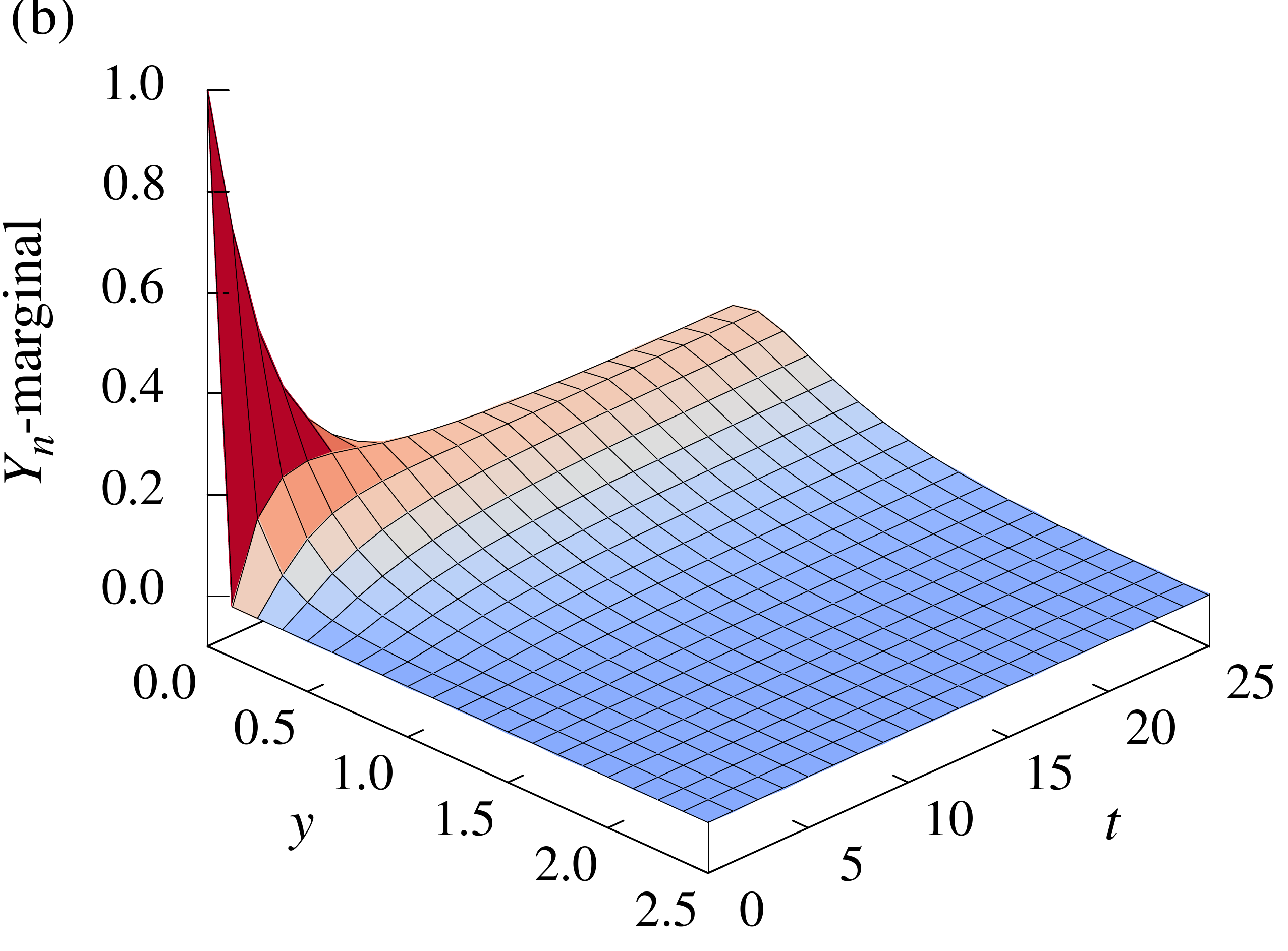}
\par\end{centering}
\caption{\label{fig:marginals}Marginal distributions obtained from an optimal
solution of the follower problem on a circle. Panel (\emph{a}): the
$\protect\XX_{n}$-marginal distribution as a function of time, logarithmic
scale is used to enhance visibility of features; (\emph{b}) the $\protect\YY_{n}$-marginal
distribution as a function of time. Both distributions are normalized
on the discrete space, with the coordinate axes showing the corresponding
$\protect\XX$ and $\protect\YY$ space values.}
\end{figure}

\begin{figure}
\centering{}\includegraphics[width=0.5\textwidth]{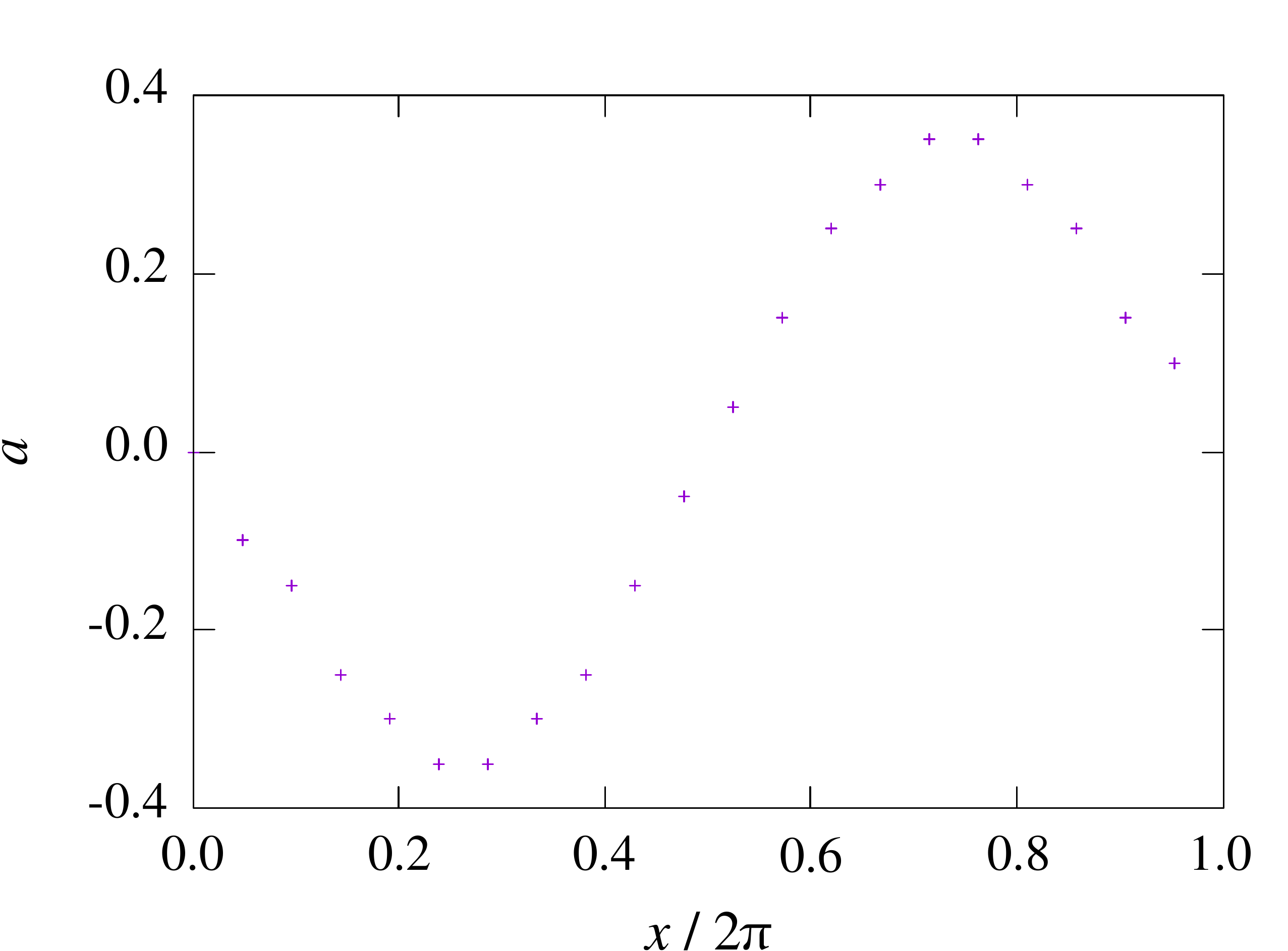}\caption{\label{fig:control}Visualization of the optimal control at time point
$T^{\ast}/2$, obtained by plotting non-zero points of the $\protect\XX_{n}\times\protect\XX_{n}$-marginal
distribution. As for each $x$ there appears only one action $a$
where the marginal distribution is zero, the control appears to be
strict.}
\end{figure}

We have numerically solved the problem for the following values of
the parameters:
\begin{gather*}
n=21,\,T^{\ast}=25,\\
\blo=-0.5,\,\bhi=0.5,\\
\sigma=1,\,\gamma=2,\,\alpha=0.25,\,\theta=1,\\
\nu=\delta_{0}.
\end{gather*}
The resulting linear programming problem was solved in parallel to
its dual using an interior point method described in \cite[Chapter 3]{Suvrit2012},
with a non-negativity condition imposed on the solution measure. An
optimal value of $\rho^{\ast}\approx1.79$ was found, and an effectively
zero duality gap was obtained. To visualize the solution, we plot
the $\XX_{n}$- and $\YY_{n}$-marginal distributions of the optimal
solution in Fig.~\ref{fig:marginals}. As expected, the $\XX_{n}$-marginal
distribution is centered around zero, meaning that the follower indeed
remains in the vicinity of its target, with the diffusion, or the
randomness of the target's motion creating the spreading of the distribution
as time advances. Similarly, the $\YY_{n}$-marginal distribution
spreads out after starting out concentrated at zero, becoming effectively
stationary well before the terminal time $T^{\ast}$. The optimal
control is depicted in Fig.~\ref{fig:control}, where we have arbitrarily
picked the time closest to $T_{n}^{\ast}/2$ to plot the non-zero
points of $\XX_{n}\times\AA_{n}$-marginal distribution. The control
appears to be strict, i.e. selecting a single action for a given state.
To further validate our results, we independently solved the risk-neutral
version of the problem using dynamic programming methods. Comparing
the results to the risk-aware problem with a small value of $\theta$,
we found very good agreement.

\section{Conclusions\label{sec:concl}}

We have presented a dynamic analytic formulation of a generic risk-aware
optimal control problem, with Polish, possibly infinite-dimesional
state and action spaces, and where the underlying dynamics are given
in the martingale formulation. Our primary goal was constructing a
practical method for solving risk-aware relaxed controlled martingale
problems, which we accomplished by providing an equivalent formulation
that takes the form of a nonlinear programming problem with a linear
constraint determining the controlled processes joint state and cost
distributions. The ability to obtain the full cost distribution is
characteristic to the dynamic analytic method, and is the reason why
it is well-suited for risk-aware problems: In this context, evaluating
the objective function requires knowledge of the joint state-cost
distribution. Contrast this to convex analytic methods, which only
yield the occupation measures which can be seen as one dimensional
projections of the joint state and cost distributions.

The dynamic analytic method was also capable of providing analytic
insight into the control problem. In particular, we found that the
optimal control processes can be taken to be Markov in time and the
system state and running costs. Significantly, we were also able to
prove the existence of optimal Markov controls under additional conditions.
We also provided a rather simple but instructive example of how the
method can be used to numerically solve risk-aware optimal control
problems.

\section*{Acknowledgements}
Acknowledgement. The authors would gratefully like to acknowledge
support for this research from the Singapore Ministry of Education,
under Tier 2 project MOE-2015-T2-2148.

\appendix

\section{Proofs and auxiliary results}

\subsection{Proofs for Section 2}
\begin{proof}[Proof of Proposition~\ref{prop:regular-means-regular}]
Let $\pi\in\DdD(A,K,\nu_{0})$, $K_{n}\deq\{u\in\UU\mid\psi_{\UU}(u)\leq n\}$
and $\tau_{n}\deq\inf\{t\in\TT\mid\psi_{\UU}(u_{t})\notin K_{n}\}$
for all $n\in\NN$. By the inf-compactness of $\psi_{\UU}$, $K_{n}$
is compact for all $n\in\NN$, so that $\tau_{n}$ is a stopping time
for all $n\in\NN$, see {e.g.} \cite[Proposition 2.1.5]{EK1986}. From
Eq.~(\ref{eq:martingale}), for all $k,n\in\NN$ and $t\in\TT$,
$s\leq t$,
\begin{align*}
m_{t\wedge\tau_{n}}^{\phi_{k}}-m_{s\wedge\tau_{n}}^{\phi_{k}} & =\phi_{k}(u_{t\wedge\tau_{n}}^{\pi})-\phi_{k}(u_{s\wedge\tau_{n}}^{\pi})-\int_{s}^{t\wedge\tau_{n}}\int_{\AA}A\phi_{k}(u_{r}^{\pi},a)\pi_{r}(\D a)\,\D r.
\end{align*}
By the optional stopping theorem \cite[Theorem 1.62]{Pardoux2014},
$\EE[m_{t\wedge\tau_{n}}^{\phi_{k}}-m_{s\wedge\tau_{n}}^{\phi_{k}}\mid\Ff_{s\wedge\tau_{n}}]=0$
for all $n,k\in\NN$, and using the boundedness of $A\phi_{k}$ by
$\psi_{\UU}$ and $\psi_{\AA}$,
\begin{align*}
\EE\Bigl[\phi_{k}(u_{t\wedge\tau_{n}}^{\pi})&\Bigm|\Ff_{s\wedge\tau_{n}}\Bigr] =\phi_{k}(u_{s\wedge\tau_{n}}^{\pi})+\EE\biggl[\int_{s}^{t\wedge\tau_{n}}\int_{\AA}A\phi_{k}(u_{r}^{\pi},a)\pi_{r}(\D a)\,\D r\biggm|\Ff_{s\wedge\tau_{n}}\biggr]\\
 & \leq\phi_{k}(u_{s\wedge\tau_{n}}^{\pi})+\EE\biggl[\int_{s}^{t}\Lambda_{1}\left(1+\psi_{\UU}(u_{r\wedge\tau_{n}}^{\pi})+\int_{\AA}\psi_{\AA}(a)\pi_{r}(\D a)\right)\,\D r\biggm|\Ff_{s\wedge\tau_{n}}\biggr]\\
 & \qquad\forall t\in\TT,\,s\leq t.
\end{align*}
Letting $k\to\infty$ and using the dominated convergence theorem
and Gr\"onwall's inequality \cite[Corollary 6.60]{Pardoux2014}, we
obtain
\begin{align}
  \EE\Bigl[\psi_{\UU}(u_{t\wedge\tau_{n}}^{\pi})\Bigm|\Ff_{s\wedge\tau_{n}}\Bigr]
  & \leq
  \biggl(\psi_{\UU}(u_{s\wedge\tau_{n}}^{\pi} + \Lambda_{1}(t-s)
  \notag \\
  &\qquad +\Lambda_{1}\EE\Bigl[\int_{s}^{t}\int_{\AA}\psi_{\AA}(a)\pi_{r}(\D a)\,\D r\Bigm|\Ff_{s\wedge\tau_{n}}\Bigr])\biggr)\E^{\Lambda_{1}(t-s)}, \label{eq:psi-u-estimate} 
\end{align}
for all $t\in\TT$, $s\leq t$. Setting
\begin{gather*}
b_{t}\deq\Lambda_{1}\EE\Bigl[t+\int_{0}^{t}\int_{\AA}\psi_{\AA}(a)\pi_{r}(\D a)\,\D r\Bigr],
\end{gather*}
from Eq.~(\ref{eq:psi-u-estimate}), we find that
\begin{align*}
\psi_{\UU}(u_{0}^{\pi}) & \geq\E^{-\Lambda_{1}t}\EE\Bigl[\psi_{\UU}(u_{t\wedge\tau_{n}}^{\pi})\Bigr]-b_{t}\\
 & \geq\E^{-\Lambda_{1}t}\EE\Bigl[\indic_{[\tau_{n},\infty)}(t)\psi_{\UU}(u_{t\wedge\tau_{n}}^{\pi})\Bigr]-b_{t}\\
 & \geq\E^{-\Lambda_{1}t}n\EE\Bigl[\indic_{[\tau_{n},\infty)}(t)\Bigr]-b_{t},
\end{align*}
and hence,
\begin{gather*}
\PP\bigl[\tau_{n}\leq t\bigr]\leq\E^{\Lambda_{1}t}\frac{\psi_{\UU}(u_{0}^{\pi})+b_{t}}{n}<\infty\quad\forall t\in\TT,\,n\in\NN,
\end{gather*}
where the finiteness follows from the regularity conditions given
in Definition~\ref{def:regular}(\emph{iv}, \emph{v}).

Defining $\tau_{\infty}\deq\limsup_{n\to\infty}\tau_{n}$, the time
for reaching infinity, we then get
\begin{gather*}
\PP\bigl[\tau_{\infty}\leq t\bigr]=\lim_{n\to\infty}\PP\bigl[\tau_{n}\leq t\bigr]=0\quad\forall t\in\TT,
\end{gather*}
and so the process $\psi_{\UU}(u_{\cdot})$ is almost surely finite
for all $t\in\TT$. The finiteness of $\int_{\AA}(1+\psi_{\UU}(u_{\cdot})+\psi_{\UU}(a))\pi_{\cdot}(\D a)$
is then a direct consequence of the above, and the regularity of $(A,K,\nu_{0})$.
\end{proof}
\begin{proof}[Proof of Proposition~\ref{prop:rho-rho-tilde-lsc}]
We prove the statement for the case of lower semicontinuity, for
continuity the argument is identical. Let $(\mu_{n})_{n\in\NN}\subset\Pp^{p}(\RR)$
be an arbitrary sequence converging to some $\mu\in\Pp^{p}(\RR)$
in the $p$-Wasserstein metric. This implies that $\mu_{n}\Rightarrow\mu$,
and $\int|x|^{p}\mu_{n}(\D x)\to\int|x|^{p}\mu(\D x)$ as $n\to\infty$
\cite[Theorem 6.9]{Villani2009}. By Skorokhod's representation theorem
\cite[Theorem 3.1.8]{EK1986}, there exists a probability space $(\tilde{\Omega},\tilde{\Sigma},\tilde{\PP})$
and random variables $(\tilde{X}_{n})_{n\in\NN}\subset\Ll(\tilde{\Omega};\RR)$,
$\tilde{X}\in\Ll(\tilde{\Omega};\RR)$ such that $\lL(\tilde{X}_{n})=\mu_{n}$
for all $n\in\NN$ and $\lL(\tilde{X})=\mu$, and $\tilde{X}_{n}\to\tilde{X}$
almost surely. Since $\mu_{n}\in\Pp^{p}(\RR)$, we also have that
$\tilde{X}_{n}\in\Ll^{p}(\tilde{\Omega};\RR)$ for all $n\in\NN$,
and similarly for $\tilde{X}$. An application of the dominated convergence
theorem shows that $\Vert\tilde{X}_{n}-\tilde{X}\Vert_{p}\to0$, and
so by the law invariance and lower semicontinuity of $\rho$, we have
that
\begin{gather*}
\liminf_{n\to\infty}\tilde{\rho}(\mu_{n})=\liminf_{n\to\infty}\rho(\tilde{X}_{n})\geq\rho(\tilde{X})=\tilde{\rho}(\mu),
\end{gather*}
and therefore $\tilde{\rho}$ is lower semicontinuous.
\end{proof}

\subsection{\label{subsec:proofs-s4}Proofs for Section 3}
\begin{proof}[Proof of Proposition~\ref{prop:augmentation-equivalence}.]
Let $\pi\in\DdD(H,K,\upsilon)$ and let $\hat{y}^{\pi}=(\hat{y}_{t}^{\pi})_{t\in\TT}$
as in the statement. It is straight-forward to see that $(\Omega^{\pi},\Sigma^{\pi},\Ff^{\pi},\PP^{\pi},x^{\pi},\pi)$
is a c\`adl\`ag relaxed controlled solution to $(G,K,\nu)$: It suffices
consider Eq.~(\ref{eq:martingale}) of the definition of a solution,
and select functions $f\in\dD(H)$ such that only depend on the $\XX$-component.
Regularity of $(G,K,\nu)$, Assumption~\ref{assu:main}, and Proposition~\ref{prop:regular-means-regular}
imply that $\psi(x_{\cdot}^{\pi})$, $\int_{\AA}\alpha\E^{-\alpha\cdot}c(x_{\cdot}^{\pi},a,\cdot)\pi_{\cdot}(\D a)$,
and $\hat{y}_{\cdot}^{\pi}$ are all almost surely finite for all
$t\in\TT$.

We first note that $y^{\pi}$ is continuous. Let $\dD_0 \deq \{g + g_0
\mid g \in C_c^{(1)}(\YY),\, g_0 \in \RR\}$.  By the martingale
property of the solutions,
\begin{gather*}
m_{t}^{g}\deq g(y_{t}^{\pi})-g(y_{s}^{\pi})-\int_{s}^{t}\int_{\AA}\E^{-\alpha r}c(x_{r}^{\pi},a,r)g^{\prime}(y_{r}^{\pi})\pi_{r}(\D a)\,\D r\quad\forall t\in\TT
\end{gather*}
is a martingale for all $g\in \dD_0$. Thus, for an arbitrary
$g\in \dD_0$ ,
\begin{gather*}
m_{t}^{g^{2}}=g(y_{t}^{\pi})^{2}-g(y_{s}^{\pi})^{2}-\int_{s}^{t}\int_{\AA}\E^{-\alpha r}c(x_{r}^{\pi},a,r)2g(y_{r}^{\pi})g^{\prime}(y_{r}^{\pi})\pi_{r}(\D a)\,\D s\quad\forall t\in\TT
\end{gather*}
is also a martingale. Using the above two equalities,
\begin{align*}
\left(g(y_{t}^{\pi})-g(y_{s}^{\pi})\right)^{2} & =g(y_{t}^{\pi})^{2}-2g(y_{t}^{\pi})g(y_{s}^{\pi})+g(y_{s}^{\pi})^{2}\\
 & =m_{t}^{g^{2}}-2g(y_{s}^{\pi})m_{t}^{g}+2\int_{s}^{t}\int_{\AA}\E^{-\alpha r}c(x_{r}^{\pi},a,r)g(y_{r}^{\pi})g^{\prime}(y_{r}^{\pi})\pi_{r}(\D a)\,\D r\\
 & \qquad-2g(y_{s}^{\pi})\int_{r}^{t}\int_{\AA}\E^{-\alpha r}c(x_{r}^{\pi},a,r)g^{\prime}(y_{r}^{\pi})\pi_{r}(\D a)\,\D r,
\end{align*}
so that
\begin{gather*}
\EE\left[\left(g(y_{t}^{\pi})-g(y_{s}^{\pi})\right)^{2}\right]=\EE\left[\left(\int_{s}^{t}\int_{\AA}\E^{-\alpha r}c(x_{r}^{\pi},a,r)\pi_{r}(\D a)2\left[g(y_{r}^{\pi})-g(y_{s}^{\pi})\right]g^{\prime}(y_{r}^{\pi})\,\D r\right)^{2}\right].
\end{gather*}
From this, in then follows that the (optional) quadratic variation
of $g(y_{\cdot}^{\pi})$ is almost surely zero: If $P_{t}=(t_{1},t_{2},\ldots,t_{n})$
is an arbitrary partition of $[0,t]$, $t\in\TT$, and $|P_{t}|\deq\max_{i\in\{1,\ldots,n-1\}}|t_{i+1}-t_{i}|$,
then
\begin{gather*}
\lim_{|P_{t}|\to0}\EE\left[\left(g(y_{t}^{\pi})-g(y_{s}^{\pi})\right)^{2}\right]=0\quad\forall t\in\TT,
\end{gather*}
at least almost surely for every $g \in \dD_0$. This implies that
the quadratic variation of $m^{g}$ is zero, and therefore $m^{g}$
is itself zero. So being, $y_{\cdot}^{\pi}$ is continuous, and
\begin{gather*}
g(y_{t}^{\pi})=g(y_{0}^{\pi})+\int_{0}^{t}\int_{\AA}\E^{-\alpha s}c(x_{s}^{\pi},a,s)\pi_{s}(\D a)g^{\prime}(y_{t}^{\pi})\,\D t,
\end{gather*}
for all $g \in \dD_0$, almost surely. Consider then any sequence of
functions $(g_{n})_{n\in\NN} \in \dD_0^\NN$ of the form $g_{n}(y)=y$
for all $y\leq n$, and for which $n \to g'_n(y)$ is non-decreasing for
all $y \in \YY$. From the above, it then follows using monotone
convergence theorem that
\begin{gather*}
y_{t}^{\pi}=\int_{0}^{t}\int_{\AA}\E^{-\alpha s}c(x_{s}^{\pi},a,s)\pi_{s}(\D a)\,\D s\quad\forall t\in\TT,
\end{gather*}
almost surely, and so $y^{\pi}$ is indistinguishable from $\hat{y}^{\pi}$.

For the converse part of the Proposition, let $\pi\in\DdD(G,K,\nu)$
and set $y^{\pi}$ to be the corresponding running costs, defined
as the integral in Eq.~(\ref{eq:running-costs}). Our goal is to
show that for all $fg\in\dD(H)$, the process
\begin{gather}
m_{t}^{fg}\deq f(x_{t}^{\pi})g(y_{t}^{\pi})-f(x_{0}^{\pi})g(0)-\int_{0}^{t}\int_{\AA}Hfg(x_{s}^{\pi},y_{s}^{\pi},a,s)\pi_{s}(\D a)\,\D s\quad\forall t\in\TT,\label{eq:m-equiv-1}
\end{gather}
is a martingale, which is sufficient to establish that $(\Omega^{\pi},\Sigma^{\pi},\Ff^{\pi},\PP^{\pi},(x^{\pi},y^{\pi}),\pi)\in\DdD(H,K,\upsilon)$.
The proof of this follows closely that of \cite[Lemma 4.3.4(a)]{EK1986},
however, the problem here does not quite satisfy the boundedness conditions
of that result.

Suppose $fg\in\dD(H)$ is arbitrary. Note first that $\EE[|m_{t}^{fg}|]<\infty$
for all $t\in\TT$; this follows from the bounds on $Gf$ and the
cost rate $c$ given in Assumption~\ref{assu:main}. Let $s,t\in\TT$,
$s\leq t$, and let $(t_{i})_{i\in\{1,\ldots,n\}}$ be an arbitrary
partition of $[s,t]$, $t_{1}=s$, $t_{n}=t$. Then, using the martingale
property for the $x^{\pi}$ process and the differentiability of $g$,
\begin{gather*}
m_{t,s}^{f}=f(x_{t}^{\pi})-f(x_{s}^{\pi})-\int_{s}^{t}\int_{\AA}Gf(x_{r}^{\pi},a,r)\pi_{r}(\D a)\,\D r,\\
0=g(y_{t}^{\pi})-g(y_{s}^{\pi})-\int_{s}^{t}\E^{-\alpha r}\int_{\AA}c(x_{r}^{\pi},a,r)g^{\prime}(y_{r}^{\pi})\pi_{r}(\D a)\,\D r,
\end{gather*}
$m_{\cdot,s}^{f}$ is a martingale, and defining for brevity, for
all $t^{\prime}\in[s,t]$,
\begin{gather*}
W_{t^{\prime}}\deq\int_{\AA}Gf(x_{t^{\prime}}^{\pi},a,t^{\prime})\pi_{t^{\prime}}(\D a),\\
V_{t^{\prime}}\deq\E^{-\alpha t^{\prime}}\int_{\AA}c(x_{t^{\prime}}^{\pi},a,t^{\prime})g^{\prime}(y_{t^{\prime}}^{\pi})\pi_{t^{\prime}}(\D a),
\end{gather*}
we find
\begin{align*}
\EE\Bigl[f(x_{t}^{\pi})g(y_{t}^{\pi})-f(x_{s}^{\pi})g(y_{s}^{\pi})\Bigm|\Ff_{s}\Bigr] & =\sum_{k=1}^{n}\EE\Bigl[f(x_{t_{k+1}}^{\pi})g(y_{t_{k+1}}^{\pi})-f(x_{t_{k}}^{\pi})g(y_{t_{k}}^{\pi})\Bigm|\Ff_{s}\Bigr]\\
 & =\sum_{k=1}^{n}\EE\Bigl[f(x_{t_{k+1}}^{\pi})\Bigl(g(y_{t_{k+1}}^{\pi})-g(y_{t_{k}}^{\pi})\Bigr)\\
 & \qquad\qquad+\Bigl(f(x_{t_{k+1}}^{\pi})-f(x_{t_{k}}^{\pi})\Bigr)g(y_{t_{k}}^{\pi})\Bigm|\Ff_{s}\Bigr]\\
 & =\sum_{k=1}^{n}\EE\biggl[\int_{t_{k}}^{t_{k+1}}\Bigl[f(x_{t_{k+1}}^{\pi})V_{r}+g(y_{t_{k}}^{\pi})W_{r}\Bigr]\,\D r\biggm|\Ff_{s}\biggr]\\
 & =\sum_{k=1}^{n}\EE\biggl[\int_{t_{k}}^{t_{k+1}}\biggl\{\Bigl[f(x_{r}^{\pi})+f(x_{t_{k+1}}^{\pi})-f(x_{r}^{\pi})\Bigr]V_{r}\\
 & \qquad\qquad+\Bigl[g(y_{r}^{\pi})+g(y_{t_{k}}^{\pi})-g(y_{r}^{\pi})\Bigr]W_{r}\biggr\}\,\D r\biggm|\Ff_{s}\biggr]\\
 & =\EE\biggl[\int_{s}^{t}\Bigl[f(x_{r}^{\pi})V_{r}+g(y_{r}^{\pi})W_{r}\Bigr]\,\D r\biggm|\Ff_{s}\biggr]+R_{t,s},
\end{align*}
where
\begin{gather*}
R_{t,s}\deq\sum_{k=1}^{n}\EE\biggl[\int_{t_{k}}^{t_{k+1}}\biggl\{\Bigl[f(x_{r}^{\pi})-f(x_{t_{k}}^{\pi})\Bigr]V_{r}+\Bigl[g(y_{t_{k}}^{\pi})-g(y_{r}^{\pi})\Bigr]W_{r}\biggr\}\,\D r\biggm|\Ff_{s}\biggr].
\end{gather*}
Estimating the above using H\"older's inequality, we have for the first
term
\begin{align*}
  \Biggl|\EE\biggl[\int_{t_{k}}^{t_{k+1}}&\Bigl[f(x_{r}^{\pi})-f(x_{t_{k}}^{\pi})\Bigr]V_{r}\,\D r\biggm|\Ff_{s}\biggr]\Biggr| \\
 & \leq\EE\biggl[\int_{t_{k}}^{t_{k+1}}\left|f(x_{r}^{\pi})-f(x_{t_{k}}^{\pi})\right|^{1+\frac{1}{\beta_{1}-1}}\,\D r\biggm|\Ff_{s}\biggr]^{\frac{\beta_{1}-1}{\beta_{1}}}\EE\biggl[\int_{t_{k}}^{t_{k+1}}\left|V_{r}\right|^{\beta_{1}}\,\D r\biggm|\Ff_{s}\biggr]^{\frac{1}{\beta_{1}}}\\
 & \leq\EE\biggl[\int_{t_{k}}^{t_{k+1}}\left|f(x_{r}^{\pi})-f(x_{t_{k}}^{\pi})\right|^{1+\frac{1}{\beta_{1}-1}}\,\D r\biggm|\Ff_{s}\biggr]^{\frac{\beta_{1}-1}{\beta_{1}}}\\
 & \qquad\times\EE\biggl[\int_{t_{k}}^{t_{k+1}}\left|V_{r}\right|^{\beta_{1}}\,\D r\biggm|\Ff_{s}\biggr]^{\frac{1}{\beta_{1}}}\quad\forall k\in\{1,\ldots,n-1\}.
\end{align*}
This is $o(|t_{k+1}-t_{k}|)$, since $x^{\pi}$ is c\`adl\`ag, $f,$ $g$,
and $g^{\prime}$ are continuous and bounded, and by Assumption~\ref{assu:main}
and Proposition~\ref{prop:regular-means-regular},
\begin{align*}
  \EE\biggl[\int_{t_{k}}^{t_{k+1}}&\left|V_{r}\right|^{\beta_{1}}\,\D r\biggm|\Ff_{s}\biggr]
  \\
 & \leq L_{c}^{\beta_{1}/\beta_{c}}\Vert g^{\prime}\Vert^{\beta_{1}}\EE\biggl[\int_{t_{k}}^{t_{k+1}}\left|\E^{-\alpha r}\int_{\AA}\left(1+\psi_{\XX}(x_{r}^{\pi})+\psi_{\AA}(a)\right)^{1/\beta_{c}}\pi_{r}(\D a)\right|^{\beta_{1}}\,\D r\biggm|\Ff_{s}\biggr]\\
 & \in\Oo(|t_{k+1}-t_{k}|).
\end{align*}
The second term in $R_{t,s}$ can be treated similarly, using the
continuity of $y^{\pi}$. Letting $\max_{k\in\{1,\ldots,n\}}|t_{k+1}-t_{k}|\to0$,
we have $R_{t,s}\to0$ and the claim follows.
\end{proof}
\begin{proof}[Proof of Proposition~\ref{prop:regular-forward-equation}]
Substituting $\phi_{n}$ into the forward equation, Eq.~(\ref{eq:KFE}),
rearranging, and by using the properties of $(\psi_{\UU},\psi_{\AA},\phi,L_{1},L_{\UU},L_{\AA},\beta_{1},\Lambda_{1},\text{\ensuremath{\Lambda}}_{\AA})$
given in Definition~\ref{def:regular}, we get for all $n\in\NN$
and $t\in\TT$,
\begin{align*}
\int_{\UU}\phi_{n}(u)\mu_{t}^{\UU}(\D u) & =\int_{\UU}\psi_{\UU}(u)\nu_{0}(\D u)+\int_{0}^{t}\int_{\UU\times\AA}A\phi_{n}(u,a,s)\mu_{s}(\D u\times\D a)\,\D s\\
 & \qquad+\int_{\UU}\left[\phi_{n}(u)-\psi_{\UU}(u)\right]\nu(\D u)\\
 & \leq\int_{\UU}\psi_{\UU}(u)\nu_{0}(\D u)+\int_{0}^{t}\int_{\UU\times\AA}A\phi_{n}(u,a,s)\mu_{s}(\D u\times\D a)\,\D s\\
 & \leq\int_{\UU}\psi_{\UU}(u)\nu_{0}(\D u)+\int_{0}^{t}\int_{\UU\times\AA}\Lambda_{1}\left(1+\psi_{\UU}(u)+\psi_{\AA}(a)\right)\mu_{s}(\D u\times\D a)\,\D s\\
 & \leq L_{\UU}+\int_{0}^{t}\int_{\UU}\Lambda_{1}\psi_{\UU}(u)\mu_{s}(\D u)\,\D s+\Lambda_{1}t+\Lambda_{1}L_{\AA}\frac{1}{\text{\ensuremath{\Lambda}}_{\AA}}\left(\E^{\text{\ensuremath{\Lambda}}_{\AA}t}-1\right).
\end{align*}
Applying the monotone convergence theorem, we have that 
\begin{gather*}
\int_{\UU}\psi_{\UU}(u)\mu_{t}^{\UU}(\D u)\leq L_{\UU}+\int_{0}^{t}\int_{\UU}\Lambda_{1}\psi_{\UU}(u)\mu_{s}(\D u)\,\D s+\Lambda_{1}t+\Lambda_{1}L_{\AA}\frac{1}{\text{\ensuremath{\Lambda}}_{\AA}}\left(\E^{\text{\ensuremath{\Lambda}}_{\AA}t}-1\right),
\end{gather*}
for all $t\in\TT$. By using Gr\"onwall's inequality,
\begin{align}
\int_{\UU}\psi_{\UU}(u)\mu_{t}^{\UU}(\D u) & \leq\left(L_{\UU}+\Lambda_{1}t+\Lambda_{1}L_{\AA}\frac{1}{\text{\ensuremath{\Lambda}}_{\AA}}\left(\E^{\text{\ensuremath{\Lambda}}_{\AA}t}-1\right)\right)\E^{\Lambda_{1}t},\label{eq:psi-gron}
\end{align}
and the coefficients $L_{\psi}$, $\Lambda_{\psi}$ satisfying Eq.~(\ref{eq:psi-gen-bound-1})
can be readily found.

Recalling the bound of $\psi$ in terms of $\psi_{\UU}$ and $\psi_{\AA}$,
see Definition~\ref{def:regular}(\emph{i}), it is clear that Eq.~(\ref{eq:psifin})
holds.

To show $\mu^{\UU}$ is continuous, we return to the forward equation,
and estimate for an arbitrary $f\in\dD(G)$,
\begin{align}
\biggl|\int_{\UU}f(u)\mu_{t}^{\UU}(\D u)-\int_{\UU}f(u)\mu_{s}^{\UU}(\D u)\biggr| & \leq\int_{s}^{t}\int_{\UU\times\AA}a_{f}\psi(u,a)\mu_{r}(\D u\times\D a)\,\D r\nonumber \\
 & \leq a_{f}(t-s)L_{1}\left(\frac{\E^{\Lambda_{\psi}t}-\E^{\Lambda_{\psi}s}}{t-s}\frac{L_{\psi}}{\Lambda_{\psi}}\right)^{\frac{1}{\beta_{1}}}.\label{eq:f-bound}
\end{align}
The right-hand side vanishes as $t\to s$, and since $\dD(G)$ is
convergence determining (see {e.g.} \cite[Theorem 3.4.5]{EK1986}),
$\mu$ is continuous. Finally, as the right-hand side of Eq.~(\ref{eq:f-bound})
is also independent of $\mu$ and $f$, it is clear that Eq.~(\ref{eq:psi-unicon})
holds.
\end{proof}
\begin{proof}[Proof of Lemma~\ref{lem:G-metric}]
We first show that the two given definitions are indeed equivalent.
We set
\begin{gather*}
d_{\Gg}^{\prime}\left(\mu,\nu\right)\deq\sup\biggl\{\left|\int f(u)\mu(\D u)-\int f(u)\nu(\D u)\right|\biggm|f\in\Gg,\,\Vert f\Vert+\Vert f\Vert_{\Gg}\leq1\biggr\}\quad\forall\mu,\nu\in\Pp(\UU),
\end{gather*}
and show $d_{\Gg}=d_{\Gg}^{\prime}$. This follows from the set $\Gg_{1}\deq\{f\in\Gg\mid\Vert f\Vert+\Vert f\Vert_{\Gg}\leq1\}$
being absorbing, see e.g. \cite[Proposition I.2]{Yosida1995}. Because
of this, for arbitrary $\mu,\nu\in\Pp(\UU)$,
\begin{align*}
d_{\Gg}\left(\mu,\nu\right) & =\sup\biggl\{\frac{\left|\int sf_{1}(u)\mu(\D u)-\int sf_{1}(u)\nu(\D u)\right|}{\Vert sf_{1}\Vert+\Vert sf_{1}\Vert_{\Gg}}\biggm|f_{1}\in\Gg_{1},\,s>0,\,f_{1}\neq0\biggr\}\\
 & =\sup\biggl\{\frac{\left|\int f_{1}(u)\mu(\D u)-\int f_{1}(u)\nu(\D u)\right|}{\Vert f_{1}\Vert+\Vert f_{1}\Vert_{\Gg}}\biggm|f_{1}\in\Gg_{1},\,f_{1}\neq0\biggr\}\\
 & \geq\sup\biggl\{\left|\int f_{1}(u)\mu(\D u)-\int f_{1}(u)\nu(\D u)\right|\biggm|f_{1}\in\Gg_{1}\biggr\}\\
 & =d_{\Gg}^{\prime}\left(\mu,\nu\right).
\end{align*}
On the other hand,
\begin{align*}
d_{\Gg}^{\prime}\left(\mu,\nu\right) & \geq\sup\biggl\{\left|\int f_{1}(u)\mu(\D u)-\int f_{1}(u)\nu(\D u)\right|\biggm|f_{1}\in\Gg,\,\Vert f_{1}\Vert+\Vert f_{1}\Vert_{\Gg}=1\biggr\}\\
 & =\sup\biggl\{\left|\int sf_{1}(u)\mu(\D u)-\int sf_{1}(u)\nu(\D u)\right|\biggm|f_{1}\in\Gg_{1},\,s>0,\,\Vert sf_{1}\Vert+\Vert sf_{1}\Vert_{\Gg}=1\biggr\}\\
 & =\sup\biggl\{\frac{\left|\int f_{1}(u)\mu(\D u)-\int f_{1}(u)\nu(\D u)\right|}{\Vert f_{1}\Vert+\Vert f_{1}\Vert_{\Gg}}\biggm|f_{1}\in\Gg_{1},\,f_{1}\neq0\biggr\}\\
 & =d_{\Gg}\left(\mu,\nu\right),
\end{align*}
and so $d_{\Gg}(\mu,\nu)=d_{\Gg}^{\prime}(\mu,\nu)$ for all $\mu,\nu\in\Pp(\UU)$.

Turning to the proof of part (\emph{i}), the mapping $d_{\Gg}$ is
clearly finite and symmetric, and satisfies the triangle inequality.
It is also apparent that if $\mu=\nu$, then $d_{\Gg}(\mu,\nu)=0$
for all $\mu,\nu\in\Gg$. Conversely, if $d_{\Gg}(\mu,\nu)=0$, then
\begin{gather*}
\left|\int f(u)\mu(\D u)-\int f(u)\nu(\D u)\right|=0\quad\forall f\in\Gg,
\end{gather*}
By \cite[Theorem 3.4.5(a)]{EK1986}, this implies that $\mu=\nu$.
Therefore, $d_{\Gg}$ is a metric. If $\{\mu_{n}\}_{n\in\NN}\cup\mu\subset\Pp(\UU)$
and $d(\mu_{n},\mu)\to0$ as $n\to\infty$, then \cite[Theorem 3.4.5(b)]{EK1986}
implies that $\mu_{n}\Rightarrow\mu$ as $n\to\infty$.

(\emph{ii}) First note that

\begin{align*}
\frac{\left|\int f(u)\mu(\D u)-\int f(u)\nu(\D u)\right|}{\Vert f\Vert+\Vert f\Vert_{\Gg}} & \geq\frac{\left|\int f(u)\mu(\D u)-\int f(u)\nu(\D u)\right|}{\Vert f\Vert+\Vert f\Vert_{\Gg^{\prime}}}\quad\forall f\in\Gg^{\prime}.
\end{align*}
Taking the supremum over $\Gg^{\prime}$, and using basic estimates,
we then get
\begin{align*}
\sup_{f\in\Gg}\left\{ \frac{\left|\int f(u)\mu(\D u)-\int f(u)\nu(\D u)\right|}{\Vert f\Vert+\Vert f\Vert_{\Gg}}\right\}  & \geq\sup_{f\in\Gg^{\prime}}\left\{ \frac{\left|\int f(u)\mu(\D u)-\int f(u)\nu(\D u)\right|}{\Vert f\Vert+\Vert f\Vert_{\Gg^{\prime}}}\right\} ,
\end{align*}
and the claim follows.
\end{proof}

\subsection{Other proofs}

The following proposition asserts some basic properties regarding
tightness of sets of measures. This is used frequently in weak convergence
arguments.
\begin{prop}
\label{prop:tightness}(\emph{i}) Let $\UU_{1}$ and $\UU_{2}$ be
topological spaces, $\{\mu_{n}\}_{n\in\NN}\subset\Mm(\UU_{1}\times\UU_{2})$,
and let $\mu_{n}^{1}\deq\mu_{n}^{\UU_{1}}=\mu_{n}(\cdot\times\UU_{2})$
and $\mu_{n}^{2}\deq\mu_{n}^{\UU_{2}}=\mu_{n}(\UU_{1}\times\cdot)$
be respectively the $\UU_{1}$ and $\UU_{2}$ marginals of $\mu_{n}$
for every \textbf{$n\in\NN$}. If $\{\mu_{n}^{1}\}_{n\in\NN}$ and
$\{\mu_{n}^{2}\}_{n\in\NN}$ are both tight, then $\{\mu_{n}\}_{n\in\NN}$
is tight. By extension, this statement holds for all finite Cartesian
products of topological spaces. (\emph{ii}) Let $\{\mu_{i}\}_{i\in I}\subset\Mm(\RR_{\geq0})$,
where $I$ is a (possibly uncountable) index set. Suppose $\phi$
is a non-decreasing non-negative measurable function and that there
is a $b>0$ such that $\int\phi(x)\mu_{i}(\D x)<b$ for all $i\in I\setminus F$
where $F$ is finite. Then $\{\mu_{i}\}_{i\in I}$ is tight. (\emph{iii})
Let $\UU$ be Polish, $\{\mu_{i}\}_{i\in I}\subset\Mm(\UU)$, and
$\phi:\UU\to\RR$ be inf-compact. If $\int\phi(x)\mu(\D x)<b$ for
all $i\in I\setminus F$ where $F$ is finite, then $\{\mu_{i}\}_{i\in I}$
is tight.
\end{prop}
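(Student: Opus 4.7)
The three parts of the proposition are standard tightness facts, and the plan is to dispatch them in order with elementary measure-theoretic estimates. The main tool throughout is a Markov-type tail bound combined with the Hausdorff-style characterization of tightness (for every $\epsilon>0$, a compact set whose complement carries mass less than $\epsilon$).

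For part (\emph{i}), I would proceed by a direct rectangle construction. Fix $\epsilon>0$. By the assumed tightness of $\{\mu_n^1\}$ and $\{\mu_n^2\}$, choose compacts $K_1\subset\UU_1$ and $K_2\subset\UU_2$ with $\mu_n^j(\UU_j\setminus K_j)<\epsilon/2$ for $j=1,2$ and all $n\in\NN$. Then $K\deq K_1\times K_2$ is compact in the product topology, and writing
\begin{gather*}
(\UU_1\times\UU_2)\setminus K\subset \bigl((\UU_1\setminus K_1)\times\UU_2\bigr)\cup\bigl(\UU_1\times(\UU_2\setminus K_2)\bigr)
\end{gather*}
gives $\mu_n((\UU_1\times\UU_2)\setminus K)\leq \mu_n^1(\UU_1\setminus K_1)+\mu_n^2(\UU_2\setminus K_2)<\epsilon$. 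The extension to finite Cartesian products is then an immediate induction on the number of factors.

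For part (\emph{ii}), the plan is to use that $\phi$ is non-decreasing, so that on $[M,\infty)$ we have $\phi\geq \phi(M)$; hence for every $i\in I\setminus F$,
\begin{gather*}
\phi(M)\,\mu_i([M,\infty))\leq \int_{[M,\infty)}\phi(x)\,\mu_i(\D x)\leq \int\phi(x)\,\mu_i(\D x)<b,
\end{gather*}
so $\mu_i([M,\infty))<b/\phi(M)$. Choosing $M$ large enough that $\phi(M)>b/\epsilon$ (which, since $\phi$ is non-decreasing and the bound is to hold uniformly, necessarily uses that $\phi(M)\to\infty$ — the nontrivial content of the hypothesis) yields $\mu_i([0,M])\geq \mu_i(\RR_{\geq0})-\epsilon$ for every $i\in I\setminus F$. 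The finite exceptional set $F$ is handled trivially by enlarging $M$ to accommodate each $\mu_i$ with $i\in F$ individually, using the inner regularity of finite Borel measures on $\RR_{\geq0}$.

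For part (\emph{iii}), the argument mirrors (\emph{ii}) but uses the inf-compactness of $\phi$ directly: for each $c>0$ the sublevel set $K_c\deq\{u\in\UU\mid \phi(u)\leq c\}$ is compact by hypothesis, and Markov's inequality gives
\begin{gather*}
\mu_i(\UU\setminus K_c)=\mu_i(\{\phi>c\})\leq \frac{1}{c}\int\phi(u)\,\mu_i(\D u)<\frac{b}{c}\quad\forall i\in I\setminus F,
\end{gather*}
(after replacing $\phi$ by $\phi-\inf\phi$ if necessary to secure non-negativity, which does not alter inf-compactness or the integrability hypothesis up to an additive constant). Choosing $c>b/\epsilon$ handles all but finitely many of the $\mu_i$, and the remaining finitely many indices in $F$ are handled individually by inner regularity on the Polish space $\UU$. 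Since none of these steps involves any subtle analytic input beyond the inf-compactness hypothesis and Markov's inequality, I do not expect a genuine obstacle; the only thing one has to be mildly careful about is the treatment of the exceptional set $F$ and, in (\emph{ii}), noting explicitly that for a non-decreasing $\phi$ on $\RR_{\geq0}$ the bound is informative precisely when $\phi(M)\to\infty$.
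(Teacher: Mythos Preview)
Your proof is correct and follows essentially the same approach as the paper: part (\emph{i}) is identical (rectangle construction with the complement inclusion), and for (\emph{ii}) and (\emph{iii}) you use a direct Markov-type bound where the paper phrases the same estimate as a proof by contradiction, which is a purely cosmetic difference. Your explicit remark that (\emph{ii}) tacitly requires $\phi(M)\to\infty$ is apt; the paper omits the proof of (\emph{ii}) entirely and so does not address this.
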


\begin{proof}
(\emph{i}) By tightness of the marginals, for each $\epsilon>0$ we
can find compact $K_{\epsilon}^{1}\subset\UU_{1}$ and $K_{\epsilon}^{2}\subset\UU_{2}$
such that $\mu_{n}^{1}(K_{\epsilon}^{1\,\compl})<\epsilon/2$ and
$\mu_{n}^{2}(K_{\epsilon}^{2\,\compl})<\epsilon/2$ for all $n\in\NN$.
Let $K_{\epsilon}\deq K_{\epsilon}^{1}\times K_{\epsilon}^{2}$. As
a product of compact sets, Tychonoff's theorem \cite[Theorem 2.57]{Aliprantis2006}
states that $K_{\epsilon}$ is compact. Noting that $K_{\epsilon}^{\compl}\subset(K_{\epsilon}^{1\,\compl}\times\UU_{2})\cup(\UU_{1}\times K_{\epsilon}^{2\,\compl})$,
we have that $\mu_{n}(K_{\epsilon}^{\compl})\leq\mu_{n}(K_{\epsilon}^{1\,\compl}\times\UU_{2})+\mu_{n}(\UU_{1}\times K_{\epsilon}^{2\,\compl})=\mu_{n}^{1}(K_{\epsilon}^{1\,\compl})+\mu_{n}^{2}(K_{\epsilon}^{2\,\compl})<\epsilon$,
demonstrating that $\{\mu_{n}\}_{n\in\NN}$ is tight.

(\emph{iii}) It suffices to show that $\{\mu_{i}\}_{i\in I\setminus F}$
is tight; we may always add a finite collection of measures into it
and maintain tightness. Suppose this set is not tight. Then we can
find an $\epsilon>0$ such that for all compact $K\subset\UU$, there
is a measure $\mu_{i}$ for which $\mu_{i}(K^{\compl})\geq\epsilon$.
Let $K=\{u\in\UU\mid\phi(u)\leq b/\epsilon\}$, and select $\mu_{i}$
so that $\mu_{i}(K^{\compl})\geq\epsilon$. Then,
\begin{gather*}
\int\phi(u)\mu_{i}(\D u)=\int_{K}\phi(u)\mu_{i}(\D u)+\int_{K^{\compl}}\phi(u)\mu_{i}(\D u)\geq b,
\end{gather*}
a contradiction. Proof of (\emph{ii}) uses the same idea and is omitted.
\end{proof}

\bibliographystyle{plain}
\bibliography{DynAnalytic}

\end{document}